\documentclass[11pt,twoside]{amsart}
\usepackage{amsfonts}
\usepackage{amsmath}
\usepackage{amssymb}
\usepackage[latin1]{inputenc} 
\usepackage{mathrsfs}

\newtheorem{theo}{Theorem}
\newtheorem{pro}{Proposition}
\newtheorem{cor}{Corollary}
\newtheorem{lem}{Lemma}
\theoremstyle{definition}
\newtheorem{defn}{Definition}
\newtheorem{propr}{Properties}

\theoremstyle{remark}
\newtheorem{rem}{Remark}

\newtheorem{exe}{Example}

\numberwithin{equation}{section}


\newcommand{\cal}       {\mathcal}
\textheight 23cm                                            
\textwidth16cm                                              
\oddsidemargin 0cm \evensidemargin 0cm \voffset-1.64cm
\setlength{\oddsidemargin}{0.in} \setlength{\evensidemargin}{0.in} 

\def\finpr{\hfill \hbox{
\vrule height 1.453ex  width 0.093ex  depth 0ex \vrule height
1.5ex  width 1.3ex  depth -1.407ex\kern-0.1ex \vrule height
1.453ex  width 0.093ex  depth 0ex\kern-1.35ex \vrule height
0.093ex  width 1.3ex  depth 0ex}}
\newenvironment{proofof}{{\noindent {\it Proof of }}}{\hfill \finpr \\ }
\def\bC{{\Bbb C}}
\def\bR{{\Bbb R}}

          \def\C{\cal C}          \def\b{\beta}
                      
          \def\L{\Lambda}         \def\l{\lambda}
       \def\ds{\displaystyle}  \let\w=\wedge
           \let\L=\longrightarrow  
         \def \C{\cal C}         \let\l=\rightarrow
               
                 \def\ds{\displaystyle}
\let\ov=\overline
    
\def\1{1\!\rm l}

\textheight 23cm                                            
\textwidth16cm                                              
\oddsidemargin 0cm \evensidemargin 0cm \voffset-1.64cm
\setlength{\oddsidemargin}{0.in} \setlength{\evensidemargin}{0.in} 
\title[Complex Hessian Operator, $m$-capacity and $m$-Potential]{Complex Hessian Operator, $m$-capacity, Cegrell's classes and $m$-Potential associated to a
Positive Closed Current}
\author[A. Dhouib and F. Elkhadhra]{Abir Dhouib and Fredj Elkhadhra}
\address{D\'epartement de Math\'ematique\\ Ecole Superieur des Sciences et de Technologie de Hammam Sousse\\Rue Lamine Abassi 4011 Hammam-Sousse Tunisie.}
\email{fredj.elkhadhra@fsm.rnu.tn, abirdhouib2013@hotmail.fr}

\subjclass[2010]{Primary 32C30; Secondary 31A15}
\keywords{Positive current, Capacity}
\begin{document}
\begin{abstract} 
    In this paper we firstly introduce the concepts of capacity and Cegrell's classes associated to any $m$-positive closed current $T$. Next, after investigating the most imporant related properties, we study the definition and the continuity of the complex hessian operator in several cases, generalizing then the work of Demailly and Xing in this direction. We also prove a Xing-type comparison principle for the analogous Cegrell class $\mathcal{F}^{m,T}$ of negative $m$-subharmonic functions. Finally, we generalize the work of Ben Messaoud-El Mir on the complex Monge-Amp\`ere operator and the Lelong-Skoda potential associated to a positive closed current.
\end{abstract} 
\maketitle
\tableofcontents
\section{Introduction}
    Let $\Omega$ be a bounded open subset of $\bC^n$. Denote by ${\cal {PSH}}(\Omega)$ the set of plurisubharmonic (psh) functions on $\Omega$. Denote also by ${\mathscr D}_p^+(\Omega)$ (resp.${\mathscr C}_p^+(\Omega)$) the convex cone of positive $(p,p)$-forms (resp. positive currents  of bidimension $(p,p)$) on $\Omega$. Throughout the paper, $\beta$ is  the standard K\"ahler form on $\bC^n$ and ${\rm Supp}T$ is the support of a given current $T$. Beside the introduction the paper has four sections. In Section 2 we give a short discussion on the notion of $m$-positivity of forms and currents, introduced recently by Lu \cite{[10]}. This notion serves as a generalization and the analogue of the well-known theory of positivity. In \cite{[5]}, Cegrell introduced and studied three importants classes of negative psh functions. Among the fundamental properties of such classes, Cegrell obtained the biggest domain of definition of the complex Monge-Amp\`ere operator. In \cite{[9]}, the authors associated to every closed positive current $T$ an analogous classes. In particular, they generalize some properties obtained by Cegrell for the trivial current $T=1$. Later on, building the existence of a local solution of the complex hessian equation $(dd^c.)^m\wedge\beta^{n-m}=0$, Lu \cite{[10]} extends the work of Cegrell to the context of hessian complex theory. Namely, he consider the Cegrell's classes of $m$-subharmonic ($m$-sh) functions relatively to the strongly positive current $T=\beta^{n-m}$. In Section 3, we associate to each closed $m$-positive current $T$, the notion of capacity. In this study and similarly as in \cite{[7]}, we prove that every $m$-sh bounded function is continuous away from an open set of arbitrarily small capacity and therefore we obtain a Xing-type comparison principle inequality for $m$-sh functions. We establish also that the complex hessian operator $T\wedge(dd^c.)^p$ converges for monotonic limits of $m$-sh functions which are bounded only near $\partial\Omega\cap{\rm Supp}T$. This is essentially the work of Demailly \cite{[8]} in the border case $m=n$. Next, using the quasicontinuity of $m$-sh bounded function we prove that the monotonicity condition can be relaxed to a convergence in the sense of capacity, but under certain hypothesis on the relative p\^ole sets. Similarly as in \cite{[9]} and \cite{[10]}, in Section 4, we associate to each $m$-positive closed current, the analogous pluricomplex energy classes of Cegrell. Some properties developed in \cite{[9]} and \cite{[10]}, are then generalized. Namely, we show that the complex hessian operator previously studied in section 3, is well defined for the Cegrell's classes. Moreover, in connection with these classes, we prove a Xing-type comparision principle inequality, which generalizes the one proved by \cite{[11]} for the trivial current $T=1$. The purpose of Section 5, is to extend the main work of Ben Messaoud-El Mir \cite{[3]} on the Monge-Amp\`ere operator and on the local potential relatively to a positive closed current, to the complex hessian theory. To this aim, we replace the well-known Newton kernel used by \cite{[3]}, by an $(n-m+1)$-sh function of Riesz-type kernel.
\section{$m$-positivity of forms and currents}
 According to \cite{[10]}, a real $(1,1)$-form $\alpha$ is said $m$-positive on $\Omega$ if at every point of $\Omega$ we have
$\alpha^j\wedge\beta^{n-j}\geq 0,\ \forall j=1,...,m.$ By duality a current $T$ of bidimension $(p,p)$ on $\Omega$, $p\leq m$, is said $m$-positive if
$T\wedge\alpha_1\wedge...\wedge\alpha_p \geq 0,$ for all $m$-positive $(1,1)$-forms $\alpha_1,...,\alpha_p$.
\begin{rem} It is not hard to see that the notion of $m$-positivity of $(1,1)$-forms coincides with the standard one when $m=n$. This is not the case if $m<n$:  in fact it is clear that the form $\alpha=idz_1\wedge d\ov z_1+idz_2\wedge d\ov z_2-\frac{i}{2}dz_3\wedge d\ov z_3$ in $\bC^3$, is $2$-positive but not positive.
\end{rem}
The following lemma will be essential for our work
\begin{lem} (See \cite{[4]}) Let $1\leq p\leq m$. If $\alpha,...\alpha_p$ are $m$-positive $(1,1)$-forms then $\alpha_1\wedge...\wedge\alpha_p\wedge\b^{n-m}\geq 0$.
\end{lem}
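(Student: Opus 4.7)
The plan is to reduce the lemma to its top-degree instance $p=m$ and then apply Garding's polarization inequality for hyperbolic polynomials. Since $\omega:=\alpha_1\wedge\cdots\wedge\alpha_p\wedge\beta^{n-m}$ has bidegree $(n-m+p,n-m+p)$, to show weak positivity $\omega\geq 0$ it suffices, by the duality between strongly and weakly positive forms, to verify that $\omega\wedge\gamma$ is a nonnegative $(n,n)$-form for every strongly positive $(m-p,m-p)$-form $\gamma$. Every such $\gamma$ is a finite positive combination of simple products $(i\eta_1\wedge\bar\eta_1)\wedge\cdots\wedge(i\eta_{m-p}\wedge\bar\eta_{m-p})$, where each $\eta_j$ is a $(1,0)$-form, and each factor $i\eta_j\wedge\bar\eta_j$ has rank at most one, so its square vanishes; in particular it is $m$-positive for every $m$. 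Setting $\alpha_{p+j}:=i\eta_j\wedge\bar\eta_j$ for $1\leq j\leq m-p$ therefore reduces the claim to the top-degree statement: if $\alpha_1,\ldots,\alpha_m$ are $m$-positive $(1,1)$-forms, then $\alpha_1\wedge\cdots\wedge\alpha_m\wedge\beta^{n-m}\geq 0$ as a top-dimensional form.

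I would handle this top-degree case pointwise. Fix $x\in\Omega$ and choose a basis of $T_x^{1,0}\bC^n$ diagonalizing $\beta(x)$. For any Hermitian $(1,1)$-form $\alpha$ with eigenvalues $\lambda=(\lambda_1,\ldots,\lambda_n)$ relative to $\beta$, a direct computation gives
\[
\alpha^j\wedge\beta^{n-j}=\frac{j!(n-j)!}{n!}\,\sigma_j(\lambda)\,\beta^n,
\]
so $m$-positivity of $\alpha$ is equivalent to $\lambda$ lying in the closure of the Garding cone $\Gamma_m:=\{\lambda\in\bR^n:\sigma_1(\lambda)>0,\ldots,\sigma_m(\lambda)>0\}$. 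The polynomial $\sigma_m$ is hyperbolic of degree $m$ with respect to $\mathbf{1}=(1,\ldots,1)$---indeed $t\mapsto\sigma_m(\lambda+t\mathbf{1})$ is, up to a positive constant, the $(n-m)$-th derivative of $\prod_{i=1}^n(t+\lambda_i)$, hence has $m$ real roots by Rolle's theorem---and $\Gamma_m$ is precisely its hyperbolicity cone. The map $(\alpha_1,\ldots,\alpha_m)\mapsto(\alpha_1\wedge\cdots\wedge\alpha_m\wedge\beta^{n-m})/\beta^n$ is the fully polarized symmetric multilinear form of $\alpha\mapsto\alpha^m\wedge\beta^{n-m}/\beta^n$, and Garding's theorem asserts that the polar form of a hyperbolic polynomial is nonnegative on the Cartesian power of the closure of its hyperbolicity cone; applied here it yields the desired inequality at $x$.

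The main obstacle is Garding's positivity of the polar form, which cannot be extracted from polarization alone (for instance $(t_1-t_2)^2\geq 0$ has the negative mixed coefficient $-2$). It is a classical theorem proved by induction on $m$, simultaneously establishing the convexity of $\Gamma_m$ and the nonnegativity of the polar form, using that the directional derivative of a hyperbolic polynomial along a vector in its hyperbolicity cone is again hyperbolic with a cone at least as large. In the write-up I would invoke this polarization inequality as in \cite{[4]} and focus the exposition on Step~1, the duality reduction, which is the only part of the argument tailored to the $m$-positivity setting of the paper.
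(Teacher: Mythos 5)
The paper itself gives no proof of this lemma---it simply defers to Blocki \cite{[4]}---and your argument (reduction to the top-degree case $p=m$ by wedging with simple strongly positive forms $i\eta_j\wedge\bar\eta_j$, which are $m$-positive, followed by Garding's inequality for the hyperbolic polynomial $\sigma_m$) is precisely the standard proof in that reference, so it is essentially the same approach and is correct. The one point to state carefully in a write-up is that Garding's theorem must be applied to $\alpha\mapsto(\alpha^m\wedge\beta^{n-m})/\beta^n$ viewed as a hyperbolic polynomial (with respect to $\beta$) on the real vector space of Hermitian $(1,1)$-forms, since $\alpha_1,\dots,\alpha_m$ need not be simultaneously diagonalizable; your eigenvalue/Rolle computation is exactly what establishes that hyperbolicity, and $m$-positivity places each $\alpha_j$ in the closed Garding cone, so the polarization inequality applies as you claim.
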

 Our aim now is to formulate a more general definition of $m$-positivity that has the requirement of being compatible with the concept of the standard notion of positivity :
\begin{defn} Let $\varphi$ be a real $(p,p)$-form on $\Omega$ and $T$ be a current of bidimension $(p,p)$ on $\Omega$. Let $p\leq m\leq n$, then we say that
\begin{enumerate}
\item $\varphi$ is $m$-positive on $\Omega$ if at every point of $\Omega$ we have
$$\varphi\wedge\beta^{n-m}\wedge\alpha_1\wedge...\wedge\alpha_{m-p} \geq 0,\qquad\forall \alpha_1,...,\alpha_{m-p}\ \ m{\rm -positive\ forms}.$$
\item $\varphi$ is $m$-strongly positive on $\Omega$ if
$$\varphi=\ds\sum_{k=1}^N\lambda_k\alpha_1^k\wedge...\wedge\alpha_{p}^k,$$
where $\alpha_1^k,...,\alpha_{p}^k$, are $m$-positive forms on $\Omega$ and $\lambda_k\geq 0$.
\item $T$ is $m$-positive if $\langle T,\varphi\rangle\geq 0,$ $\forall \varphi$ $m$-strongly positive $(p,p)$-form on $\Omega$.
\item $T$ is $m$-strongly positive if $\langle T,\varphi\rangle\geq 0,$ $\forall \varphi$ $m$-positive $(p,p)$-form on $\Omega$.
\end{enumerate}
\end{defn}
\begin{rem} Notice that when $m=n$, we recover the well-known notions of positivity and strongly positivity. Moreover, it is clear that $m$-strongly positivity implies $m$-positivity and the notion of $m$-positivity of currents coincides with the one given by Lu \cite{[10]}. Let also stress that if $p=1$, then our definition 1 for $m$-positivity of forms is equivalent with the notion of $m$-positive forms given by \cite{[10]}. This means that if $\alpha$ is a real $(1,1)$-form, then $\alpha\wedge\beta^{n-m}\wedge\alpha_1\wedge...\wedge\alpha_{m-1} \geq 0,\ \forall \alpha_1,...,\alpha_{m-1}$ $m$-positive forms is equivalently saying that $\alpha^j\wedge\beta^{n-j}\geq 0, \forall j=1,...,m$. Indeed, we can use lemma 1 combined with the preliminary part of \cite{[12]}. In particular, as in the border case $m=n$, if $p=1$, there is no difference between $m$-positivity and $m$-strongly positivity.
\end{rem}
\begin{exe} Notice that unlike the complex case ($m=n$), starting from an $m$-positive form, we cannot define an $m$-positive current. However, if $\varphi$ is an $m$-positive (resp.$m$-strongly positive) $(p,p)$-form on $\Omega$ then, the form $\varphi\wedge\beta^{n-m}$ define an $m$-positive (resp.$m$-strongly positive) current of bidimension $(m-p,m-p)$ on $\Omega$. More generally, if $X$ is a pure $p$-dimensional analytic subset of $\Omega$, such that $n\leq m+p$, then, in view of definition 1, we see that $[X]\wedge\beta^{n-m}$ is an $m$-strongly positive closed current of bidimension $(p+m-n,p+m-n)$ and supported by $X$.
\end{exe}
For convenience, we will denote by ${\mathscr D}_p^m(\Omega)$ (resp.${\mathscr C}_p^m(\Omega)$) the convex cone of $m$-positive $(p,p)$-forms (resp. $m$-positive currents of bidimension $(p,p)$) on $\Omega$.
\begin{defn}
A function $u:\Omega\L\bR\cup\{-\infty\}$ is called $m$-subharmonic if it is subharmonic and
$$dd^cu\wedge\alpha_1\wedge...\wedge\alpha_{m-1}\wedge\b^{n-m}\geq 0,$$
for all $m$-positive $(1,1)$-forms $\alpha_1,...,\alpha_{m-1}$. Denote by ${\mathscr P}_m(\Omega)$ the class of $m$-sh functions on $\Omega$.
\end{defn}
 As a direct consequence, it is clear that $dd^cu\wedge\beta^{n-m}$ is an $m$-positive current for each $u\in{\mathscr P}_m(\Omega)$ and $1\leq m\leq n$. The four first assertions in the following proposition was presented in many papers (see \cite{[10]} or \cite{[12]}).
\begin{pro} \
\begin{enumerate}
\item If $u$ is of class ${\mathscr C}^2$ then $u\in{\mathscr P}_m(\Omega)$ if and only if $dd^cu$ is $m$-positive on $\Omega$.
\item If $u\in{\mathscr P}_m(\Omega)$, then the standard regularization $u_j=u\star\chi_j\in{\mathscr P}_m(\Omega_j)\cap{\mathscr C}^\infty(\Omega_j)$, where $\Omega_j=\{x\in\Omega:\ d(x,\partial\Omega)>1/j\}$. Moreover, $(u_j)_j$ decreases pointwise to $u$.
\item If $(u_\alpha)_\alpha$ is a family of $m$-sh functions, $u=\sup_\alpha u_\alpha<+\infty$ and $u$ is upper semicontinuous then $u$ in $m$-sh.
\item ${\cal {PSH}}(\Omega)={\mathscr P}_n(\Omega)\subset{\mathscr P}_{n-1}(\Omega)\subset\cdots\subset{\mathscr P}_1(\Omega)={\cal {SH}}(\Omega):=\{u,\ {\rm subharmonic\ on}\ \Omega\}$.
\item ${\mathscr D}_p^+(\Omega)={\mathscr D}_p^n(\Omega)\subset{\mathscr D}_p^{n-1}(\Omega)\subset\cdots\subset{\mathscr D}_p^p(\Omega)$.
\item ${\mathscr C}_p^p(\Omega)\subset{\mathscr C}_p^{p+1}(\Omega)\subset\cdots\subset{\mathscr C}_p^n(\Omega)={\mathscr C}_p^+(\Omega)$.
\end{enumerate}
\end{pro}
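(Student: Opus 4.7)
The plan is to treat assertions (1)--(4) as consequences of standard facts from the cited literature \cite{[10],[12]} and to concentrate on the new inclusions (5) and (6). Assertion (1) reduces, via Remark 2 applied with $\alpha=dd^c u$, to the equivalence between the two notions of $m$-positivity for the $(1,1)$-form $dd^c u$. For (2), convolution with a smooth radial kernel commutes with $dd^c$ and, by Lemma 1, preserves non-negativity of $dd^c u\wedge\beta^{n-m}\wedge\alpha_1\wedge\cdots\wedge\alpha_{m-1}$ against fixed $m$-positive test forms; the monotone decrease $u_j\downarrow u$ is classical for upper-semicontinuous functions. Assertion (3) follows from weak-$*$ continuity of $dd^c$ on monotone sequences of $m$-sh functions together with upper-semicontinuous regularization. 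Assertion (4) is immediate from (1) and (2): smooth $m$-sh functions are $(m-1)$-sh because the inequalities $(dd^c u)^j\wedge\beta^{n-j}\geq 0$ for $j=1,\ldots,m$ in particular imply the same for $j=1,\ldots,m-1$, and approximation via (2) extends the conclusion to the general case.

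For (6), I argue by duality. Let $T\in\mathscr{C}_p^m(\Omega)$ and let $\varphi=\sum_k\lambda_k\alpha_1^k\wedge\cdots\wedge\alpha_p^k$ be $(m+1)$-strongly positive, with each $\alpha_i^k$ an $(m+1)$-positive $(1,1)$-form. By the G{\aa}rding-cone nesting for $(1,1)$-forms (the same nesting invoked for (4)), each $\alpha_i^k$ is also $m$-positive, so $\varphi$ is $m$-strongly positive and $\langle T,\varphi\rangle\geq 0$ by the defining property of $\mathscr{C}_p^m$. Hence $T\in\mathscr{C}_p^{m+1}$; iterating yields the whole chain, and the identification $\mathscr{C}_p^n(\Omega)=\mathscr{C}_p^+(\Omega)$ follows because the notions of $n$-(strongly) positive and classically positive $(p,p)$-currents coincide.

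For (5), given $\varphi\in\mathscr{D}_p^m(\Omega)$ and $(m-1)$-positive $(1,1)$-forms $\alpha_1,\ldots,\alpha_{m-1-p}$, I rewrite the target inequality as
\[
\varphi\wedge\beta^{n-m}\wedge\beta\wedge\alpha_1\wedge\cdots\wedge\alpha_{m-1-p}\geq 0.
\]
Since $\beta$ is itself $m$-positive, substituting every test form by $\beta$ in the hypothesis yields directly the inclusion $\mathscr{D}_p^m\subset\mathscr{D}_p^p$, and in particular settles the endpoint case $m-1=p$ of the chain (where no $\alpha_i$ appears). For the intermediate cases the $\alpha_i$ are only $(m-1)$-positive and the hypothesis does not apply directly; my plan is to combine the $\beta$-substitution with a perturbation $\alpha_i\mapsto\alpha_i+t\beta$, which becomes $m$-positive for $t$ sufficiently large, then to expand the inequality multilinearly in $t$ and extract the constant-term positivity by a reverse induction on $m$ starting from the endpoint. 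The delicate point I anticipate is the polynomial extraction: only the large-$t$ regime is immediately controlled by the hypothesis, and propagating positivity down to $t=0$ requires combining it with the already-established endpoint inclusions.
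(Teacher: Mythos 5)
The paper itself offers no proof of this proposition: assertions (1)--(4) are attributed to \cite{[10]} and \cite{[12]}, and (5)--(6) are left as immediate consequences of Definition 1, so your citation-level treatment of (1)--(4) matches the paper, and your duality argument for (6) is correct and complete: an $(m+1)$-strongly positive $(p,p)$-form is a combination of wedges of $(m+1)$-positive $(1,1)$-forms, each of which is $m$-positive because the conditions $\alpha^j\wedge\beta^{n-j}\geq 0$ for $j\leq m+1$ contain those for $j\leq m$; hence the test cone shrinks as $m$ grows and $\mathscr{C}_p^m\subset\mathscr{C}_p^{m+1}$, with $\mathscr{C}_p^n=\mathscr{C}_p^+$ since $n$-positivity of $(1,1)$-forms is ordinary positivity.

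The genuine gap is in (5). What has to be shown is each consecutive inclusion $\mathscr{D}_p^m\subset\mathscr{D}_p^{m-1}$, i.e. that $\varphi\wedge\beta^{n-m+1}\wedge\gamma_1\wedge\cdots\wedge\gamma_{m-1-p}\geq 0$ when the $\gamma_i$ are only $(m-1)$-positive; here one test slot is traded for a factor $\beta$, but the remaining test forms range over the \emph{larger} cone of $(m-1)$-positive forms, so this is not a formal consequence of the hypothesis (your substitution of all slots by $\beta$ only yields the endpoint $\mathscr{D}_p^m\subset\mathscr{D}_p^p$, which does not imply the intermediate inclusions). Your proposed repair fails for two concrete reasons. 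First, setting $P(t)=\varphi\wedge\beta^{n-m+1}\wedge(\gamma_1+t\beta)\wedge\cdots\wedge(\gamma_{m-1-p}+t\beta)$, the hypothesis only gives $P(t)\geq 0$ for $t$ large (and only locally uniformly large, since $\gamma_i+t\beta$ is $m$-positive merely for $t$ exceeding a pointwise threshold); nonnegativity of a polynomial for large $t$ gives no information whatsoever on its constant term $P(0)$, which is the quantity you need --- already $P(t)=t-1$ shows that knowing the leading coefficient is nonnegative (which is what the endpoint inclusion provides, namely $\varphi\wedge\beta^{n-p}\geq 0$) does not help. Second, the coefficient of $t^k$ in $P(t)$ is a sum of terms $\varphi\wedge\beta^{n-(m-1-k)}\wedge\gamma_{i_1}\wedge\cdots\wedge\gamma_{i_{m-1-p-k}}$, i.e. precisely the positivity assertions at the intermediate levels $m-1-k$ tested against $(m-1)$-positive forms; so the ``reverse induction extracting the constant term'' presupposes the very chain of inclusions being proved and is circular. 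Note also that one cannot approximate an $(m-1)$-positive $\gamma_i$ from inside the cone of $m$-positive forms by $\gamma_i+\varepsilon\beta$ with $\varepsilon$ small, since that cone is strictly smaller; only the large-$t$ regime is available, which is exactly the regime that does not see $P(0)$.

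What is actually needed for (5) is a G{\aa}rding-type cone statement, for instance (in dual form) that for $(m-1)$-positive $(1,1)$-forms $\gamma_1,\ldots,\gamma_{m-1-p}$ the form $\beta^{n-m+1}\wedge\gamma_1\wedge\cdots\wedge\gamma_{m-1-p}$ belongs to the closed convex cone generated by the forms $\beta^{n-m}\wedge\alpha_1\wedge\cdots\wedge\alpha_{m-p}$ with all $\alpha_i$ $m$-positive; given such a lemma, (5) follows by linearity exactly as your argument for (6) does. Nothing in your proposal supplies this lemma (and it is not in the paper either, which simply asserts (5)), so as it stands assertion (5) remains unproved in your write-up.
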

As an immediate consequence of statement $(1)$, we see that if $u_1,...,u_p\in{\mathscr C}^2(\Omega)\cap{\mathscr P}_m(\Omega)$, $p\leq m$, then the $(p,p)$-form $dd^cu_1\wedge\cdots\wedge dd^cu_p$ is $m$-strongly positive on $\Omega$. On the other hand, it is quite easy to establish that the three later inclusions in Prop.1 are strict in general. In fact, thanks to the first statement, we see that $u=|z_1|^2+|z_2|^2-\frac{1}{2}|z_3|^2\in{\mathscr P}_2(\bC^3)\smallsetminus {\cal {PSH}}(\bC^3)$. For the assertion $(5)$, let $\varepsilon>0$, $\gamma=idz_1\wedge d\ov z_1+idz_2\wedge d\ov z_2+idz_3\wedge d\ov z_3-\varepsilon idz_4\wedge d\ov z_4$ and $\varphi=-\frac{i}{2}dz_1\wedge d\ov z_1\wedge idz_2\wedge d\ov z_2+idz_3\wedge d\ov z_3\wedge idz_4\wedge d\ov z_4$. A direct computation shows that $\gamma$ is $3$-positive, $\varphi\wedge\beta\wedge\gamma\leq 0$ and $\varphi\wedge\beta^2\geq 0$. This means that $\varphi\in{\mathscr D}_2^2(\bC^4)\smallsetminus{\mathscr D}_2^3(\bC^4)$. Concerning the cone of $m$-positive currents, let $\alpha$ be the $(1,1)$-form used in remark 1 and denote by $T_1=[z_1=0]$ the current of integration on $\{z_1=0\}$ in $\bC^3$. It is not hard to see that $T_1\wedge\alpha^2$ is negative. It follows that $T_1\in{\mathscr C}_1^+(\bC^3)\smallsetminus{\mathscr C}_1^2(\bC^3)$. For the intermediate cones, let us consider
$$T_2=\beta\wedge\alpha=2idz_1\wedge d\ov z_1\wedge idz_2\wedge d\ov z_2+\frac{i}{2}dz_1\wedge d\ov z_1\wedge idz_3\wedge d\ov z_3+\frac{i}{2}dz_2\wedge d\ov z_2\wedge idz_3\wedge d\ov z_3 .$$
Thanks to lemma 1, we have $T_2\in{\mathscr C}_1^2(\bC^3)$. Since $\gamma=idz_1\wedge d\ov z_1+idz_2\wedge d\ov z_2-idz_3\wedge d\ov z_3$ is $1$-positive and $T_2\wedge\gamma$ is negative, we conclude that $T_2\not\in{\mathscr C}_1^1(\bC^3)$.
\begin{rem} It should be noted that for $m<n$, the above classes ${\mathscr P}_m(\Omega), {\mathscr D}_p^m(\Omega)$ and ${\mathscr C}_p^m(\Omega)$ are not preserved under direct or inverse images by holomorphic maps. For example, let $\pi$ be the projection defined by $\pi(z,t)=z$, $z\in\bC^3, t\in\bC$. Let $T=\beta^2$ in $\bC^3$, then by lemma 1, $T$ is $2$-positive. However, $\pi^\star T$ is not $2$-positive since $\pi^\star T\wedge dd^c|z|^2\wedge\alpha\leq 0$, where $\alpha=dd^c|z|^2-\varepsilon idt\wedge \overline dt$ is a $2$-positive form in $\bC^4$, for $0<\varepsilon<1$. It is not difficult to formulate similar examples for the other cones as well as for the direct images.
\end{rem}
In order to more understand the cone of $m$-positive currents, we shall prove :
\begin{pro} Let $T\in{\mathscr C}_p^p(\bC^n)$ be closed, then $T$ is generated by $\beta^{n-p}$, i.e, there exists a constant $c\geq 0$ such that $T=c\beta^{n-p}$.
\end{pro}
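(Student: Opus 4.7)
The plan is to combine a pointwise linear-algebra characterization with closedness, after first smoothing $T$ by convolution. I would regularize $T$ via convolution with a standard radial mollifier, obtaining smooth closed $p$-positive $(n-p,n-p)$-forms $T_\varepsilon = T\star\chi_\varepsilon$; both closedness and $p$-positivity pass through this convolution because it commutes with $d$ and with wedging against constant-coefficient test forms. Hence it suffices to prove the statement for smooth closed $p$-positive $(n-p,n-p)$-forms on $\bC^n$ and then conclude by the weak convergence $T_\varepsilon\to T$ as $\varepsilon\to 0$.

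For a smooth $T_\varepsilon$, the condition of $p$-positivity becomes the pointwise inequality $T_\varepsilon(z)\wedge\alpha_1\wedge\cdots\wedge\alpha_p\geq 0$ for every tuple of $p$-positive $(1,1)$-forms $\alpha_i$ at $z$. The heart of the argument is a purely linear-algebraic claim: any $S\in\Lambda^{n-p,n-p}$ satisfying this must equal $c\,\beta^{n-p}$ with $c\geq 0$. I would establish this by exploiting the $U(n)$-invariance of $\beta^{n-p}$ and of the Garding cone $\Gamma_p$ of $p$-positive $(1,1)$-forms to bring $S$ into a diagonal normal form, and then testing $S\wedge\alpha^p\geq 0$ against simultaneously diagonal $\alpha=\sum_k\mu_k\,idz_k\wedge d\bar z_k$ with $\mu\in\Gamma_p$. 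Evaluating at the extremal boundary points $\mu=(1,\ldots,1,-(n-p)/p)$ and their coordinate permutations yields linear inequalities on the coefficients of $S$ in the standard basis; a summation step, combined when necessary with other extremal configurations of $\Gamma_p$ having several negative eigenvalues, forces the diagonal coefficients to coincide, while unitarily rotated versions of the same tests handle the off-diagonal part. The non-negativity of the resulting scalar follows from $S\wedge\beta^p\geq 0$.

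With $T_\varepsilon(z)=c_\varepsilon(z)\,\beta^{n-p}$ in hand, the hypothesis $dT_\varepsilon=0$ becomes $dc_\varepsilon\wedge\beta^{n-p}=0$. The pointwise Lefschetz operator $\wedge\beta^{n-p}:\Lambda^1_z\to\Lambda^{2(n-p)+1}_z$ is injective on $\bC^n$, since it factors the bijective map $\wedge\beta^{n-1}$ on $\Lambda^1_z$ as $\wedge\beta^{p-1}\circ\wedge\beta^{n-p}$. Therefore $dc_\varepsilon=0$, so $c_\varepsilon$ is a real constant, and passing to the weak limit yields $T=c\,\beta^{n-p}$ with $c\geq 0$.

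The principal obstacle is the pointwise rigidity step. Because the cone of $p$-strongly positive $(p,p)$-forms at a point is strictly larger than the standard positivity cone as soon as $p<n$, its dual cone in $\Lambda^{n-p,n-p}$ cannot be read off from classical positivity arguments, and extracting the full structure of $S$ requires a careful choice of extremal test configurations on the boundary of $\Gamma_p$ together with the use of unitary invariance. Once this linear-algebra fact is proved, the smoothing and closedness steps become routine applications of standard tools.
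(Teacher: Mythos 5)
Your route is genuinely different from the paper's, but it has a real gap, and it sits exactly where the whole content of the proposition lies. The two steps you call routine are indeed fine: mollification preserves closedness and $p$-positivity (tested against constant-coefficient forms, which suffices pointwise for a smooth form), and once you know $T_\varepsilon=c_\varepsilon\beta^{n-p}$ pointwise, the Lefschetz-type injectivity of $\wedge\beta^{n-p}$ on $1$-forms does force $c_\varepsilon$ to be constant. The problem is the pointwise rigidity claim itself: a constant-coefficient form is in particular a closed current, so that claim \emph{is} the proposition in its hardest special case, and your proposal reduces the statement to it and then only sketches it (as you acknowledge by calling it the principal obstacle). Equivalently, you would have to prove that the products $\alpha_1\wedge\cdots\wedge\alpha_p$ with $\alpha_i\in\overline\Gamma_p$ generate, as a closed convex cone, the entire half-space $\{\Phi\in\Lambda^{p,p}:\Phi\wedge\beta^{n-p}\geq0\}$; nothing in the proposal establishes this. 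Concretely, the tests you name are insufficient as described: taking diagonal $S$ and $\alpha_1=\alpha_2$ diagonal with eigenvalues the permutations of $(1,\dots,1,-(n-p)/p)$, the summation trick turns the inequalities into equalities, but already for $n=4$, $p=2$ these only give the pairings $s_{\{1,2\}}=s_{\{3,4\}}$, $s_{\{1,3\}}=s_{\{2,4\}}$, $s_{\{1,4\}}=s_{\{2,3\}}$; to force all coefficients equal one needs boundary points of $\Gamma_2$ with two negative entries, e.g. $(1,-s,1,-s)$ with $s=2-\sqrt3$, chosen so that $\sigma_2=0$. More seriously, ``unitarily rotated versions of the same tests handle the off-diagonal part'' is not an argument: for $1<n-p<n-1$ a real $(n-p,n-p)$-form cannot be diagonalized by a unitary change of coordinates, and testing against products of positive (rank-one) $(1,1)$-forms only yields ordinary positivity of $S$, which is far weaker than proportionality to $\beta^{n-p}$; the rigidity must come from the partially negative boundary elements of $\Gamma_p$, and how these annihilate the off-diagonal components in middle bidegree is precisely the missing proof.

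For comparison, the paper does not attempt this convex-geometric statement in middle degrees. It proves the case $p=1$ (and indicates $p=n-1$) by an explicit half-space duality computation with the test forms $\alpha_k=\sum_{j\neq k}idz_j\wedge d\bar z_j-(n-1)idz_k\wedge d\bar z_k$ and the same summation-to-equality trick you propose, and then handles $1<p<n-1$ by induction on $n$: it slices $T$ along hyperplanes $L\in G(n-1,n)$, shows $T_{|L}\in\mathscr{C}_{p-1}^{p-1}(L)$ by perturbing $(p-1)$-positive forms on $L$ to $p$-positive forms on $\bC^n$, and applies the induction hypothesis to the slices. Your approach, if the pointwise dual-cone fact were proved, would be more self-contained (no slicing theory) and would give the stronger local statement $T=\mu\,\beta^{n-p}$ before closedness is used; but as it stands the central linear-algebra step is missing, so the proof is incomplete.
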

\begin{proof} Assume that $p=1$, then $T$ can be written as $T=\sum_{1\leq j,k\leq n}T_{jk}i{\ds\mathop {dz_j\wedge d\ov z_k}^\vee}$, where $\vee$ means that we omit the form $dz_j\wedge d\ov z_k$ in the saturate one. Let us consider the real $(1,1)$-forms
$$\alpha_k=idz_1\wedge d\ov z_1+...-(n-1)idz_k\wedge d\ov z_k+...+idz_n\wedge d\ov z_n .$$
It is clear that for every $k\in\{1,...,n\}$, $\alpha_k$ is $1$-positive. Since $T$ is $1$-positive, one have $T\wedge\alpha_k\geq 0,$ $k=1,...,n$ in the sense of currents. Therefore, we obtain a system of $n$-inequalities formed by
$$-(n-1)T_{kk}+\sum_{s\not=k}T_{ss}\geq 0.$$
It is not hard to see that these inequalities are in fact equalities. Hence,  $(n-1)T_{kk}=\sum_{s\not=k}T_{ss}$ for $k=1,...,n$. By a simple computation we show that $T_{jj}=T_{kk}$ for $j\not=k$. Similarly we can check the case $p=n-1$. To prove the other cases $1<p<n-1$, we argue by induction on $n$. The result is clear when $n=2$ and assume it for $n-1$. Let $T$ be a current as in the proposition 2. According to \cite{[3]}, the slice $T_{|L}$ exists for $L$ in a non pluripolar subset of the grassmanien $G(n-1,n)$. We claim that $T_{|L}\in{\mathscr C}_{p-1}^{p-1}(L)$. In fact, let $\alpha_1,...,\alpha_{p-1}\in{\mathscr D}_1^{p-1}(L)$. Without loss of generality, assume that $L=\{z_n=0\}$ and setting $\gamma_j(z',z_n)=\alpha_j(z')+tdd^c|z_n|^2$, $t>0$, $j=1,...,p-1$. For $t$ sufficiently large, we have $\gamma_j\in{\mathscr D}_1^p(\bC^n)$. Indeed, let $'\lambda=(\lambda_1,...,\lambda_{n-1})$ be the eigenvalues of $\alpha_1$, then $\lambda=(\lambda_1,...,\lambda_{n-1},t)$ are the eigenvalues of $\gamma_1$. By \cite{[1]}, we have $\frac{\partial}{\partial t}H_p(\lambda)=H_{p-1}('\lambda)$, where $H_m(\mu)$ is the symmetric functions of order $m$ of the vector $\mu$. Since $\alpha_1$ is $(p-1)$-positive, $H_{p-1}('\lambda)>0$. It follows that $H_p(\lambda)>0$ for $t$ sufficiently large. Therefore,
$$T_{|L}\wedge\alpha_1\wedge...\wedge\alpha_{p-1}=T\wedge dd^c\log|z_n|\wedge\gamma_1\wedge...\wedge\gamma_{p-1}\geq 0.$$ By the induction hypothesis, it follows that $T_{|L}=c_L(dd^c|z'|^2)^{n-p}$.
\end{proof}
\section{$m$-capacity and continuity of the complex hessian operator}
The purpose of this section is towfold. First, similarly as in \cite{[7]} and \cite{[10]}, we introduce the notion of capacity associated to an $m$-positive closed current $T$. At the same times, we discuss some results and properties of such capacity, and we point out that some of them are not automatically repeated as in the trivial current $T=1$, or $T=\beta^{n-m}$. Second, we study the continuity of the complex hessian  operator for decreasing sequences of $m$-sh functions bounded near the boundary, as well as for sequences of $m$-sh functions converging in the sense of capacity.
\subsection{Relative $m$-capacity associated to an $m$-positive closed current}
According to \cite{[10]}, the complex hessian  operator $dd^cu_1\wedge\cdots\wedge dd^cu_k\wedge T$ is well defined for an $m$-positive closed current $T$ and $m$-sh locally bounded functions $u_1,...,u_k$. Moreover, acting on locally bounded $m$-sh decreasing sequence, such operator is continuous. As a consequence and similarly to \cite{[7]}, we associate to each $m$-positive closed current the following relative $m$-capacity :
\begin{defn}
Let $\Omega$ be an open set of $\mathbb{C}^n$, $K\subset\Omega$ a compact and $T$ an $m$-positive closed current of bidimension $(p,p)$ on $\Omega$, $m\geq p\geq 1$. We define the $m$-capacity of $K$ relatively to $T$ by:
$$cap_{m,T}(K,\Omega)=cap_{m,T}(K):=\sup \left\{ \int_{K} (dd^c u)^{p}\wedge T,~ u\in {\mathscr P}_m(\Omega), ~0\leq v\leq 1\right\},$$
and for every $E\subset\Omega$, $cap_{m,T}(E,\Omega)=\sup\left\{cap_{m,T}(K),\ K\ {\rm compact\ of}\ \Omega\right\}$.
\end{defn}
This capacity generalizes the one given in \cite{[10]} for the strong positive current $T =\beta^{n-m }$ and the one in \cite{[7]} for the case $m = n$, i.e, $T$ is a closed positive current. Such capacity shares the sames properties as the preceding capacities. Namely, we have
\begin{propr} \
\begin{enumerate}
\item If $E$ is Borel set, then $cap_{m, T}(E,\Omega)= \sup \left\{\int_E (dd^c v)^p\wedge T,\quad v \in  {\mathscr P}_m(\Omega) ,0\leq v\leq 1 \right\}; $
\item If $E_1 \subset E_2$ then  $cap_{m, T}(E_1,\Omega)\leq  cap_{m, T}(E_2,\Omega);$
\item If $E \subset \Omega_1\subset\Omega_2$ then  $cap_{m, T}(E,\Omega_1)\geq  cap_{m, T}(E,\Omega_2);$
\item If $E_1,E_2,\cdots $ are Borel sets of $\Omega$, then
$cap_{m, T}\left(\cup_{j\geq 1} E_j, \Omega \right)\leq \sum_{j=1}^{+\infty} cap_{m, T}(E_j,\Omega).$
\item If $E_1 \subset E_2 \subset \cdots$ are Borel sets of $\Omega$, we have :
$cap_{m, T}\left(\cup_{j\geq 1} E_j, \Omega \right)= \ds\lim_{j\rightarrow +\infty}cap_{m, T}(E_j,\Omega).$
\end{enumerate}
\end{propr}
In this direction, we state the following definition :
\begin{defn} A subset $A\subset \Omega$ is said $(m,T)$-pluripolar in $\Omega$ if $cap_{m,T} (A,\Omega)=0.$ One say that $A$ is locally $(m,T)$-pluripolar, if for avery $a\in A$, there exists an open neighborhood $V$ of $a$ such that $A\cap V$ is $(m,T)$-pluripolar in $V$ i.e, $cap_{m,T} (A\cap V,V)=0.$
\end{defn}
\begin{rem} \
\begin{enumerate}
\item Following the terminology of \cite{[10]}, a Borel set $A$ is $m$-polar (i.e, $(m,\beta^{n-m })$-pluripolar in the sense of definition 4) in $\Omega $ if and only if $A\subset\{z\in\Omega,\ v(z)=-\infty\}$, where $v\in{\mathscr P}_m(\Omega)$.
\item Assume that $0\in\Omega$ and let $L$ be a complex linear space of dimension $p$ in $\bC^n$. By a unitary change of coordinates, we assume that $L=\bC^p\times\{0\}$. Select an integer $m$ such that $p+m\geq n$. Let ${\mathscr O}$ be an open subset of $\Omega$, $u\in{\mathscr P}_m(\Omega,[0,1]),$ and $i:L\cap\Omega\hookrightarrow \Omega$ is the inclusion map. Then, we see that
$$\ds\int_{{\mathscr O}}
[L]\wedge\beta^{n-m}\wedge(dd^c u)^{p+m-n}=\ds\int_{{\mathscr O}\cap L}(i^{\star}\beta)^{p-(m+p-n)}\wedge(dd^c (i^{\star}u))^{p+m-n}.$$
Thanks to a result of \cite{[1]}, $u_{|{L\cap\Omega}}=i^{\star}u$ is $(m+p-n)$-sh. Therefore, if we consider the current $T=[L]\wedge\beta^{n-m}$, by the above equality, we deduce the equivalence: ${\mathscr O}$ is locally $(m,T)$-pluripolar if and only if $L\cap {\mathscr O}$ is locally $(m+p-n)$-polar in $L\cap\Omega$.
\item Assume that $T$ is an $m$-positive closed current of bidimension $(p,p)$ on $\Omega$ and let $E$ be a Borel subset of $\Omega$. From Prop.1 we have: $cap_{m,T}(E)\geq cap_{m+1,T}(E)\geq\cdots\geq cap_{n,T}(E)=C_T(E)$ (with the notation of \cite{[7]}). In particular, if $E$ is $(m,T)$-pluripolar then $E$ is $T$-pluripolar in the sense of \cite{[7]}. Before ending this discussion, it should be noted that the equivalence of Prop.2.3 in \cite{[9]} is far from being true. In fact if $A$ is $T$-pluripolar then $A$ is $T$-negligible (the trace measure of $A$ is zero) but the converse is false. Indeed, let $\Omega$ be the unit open ball in $\bC^2$, $T=dd^c\log|z_1|$, and let $A$ be a compact subset of $\Omega\cap\{z_1=0\}$, such that $A$ is Lebesgue-negligible but not polar.
\end{enumerate}
\end{rem}
By repeating the arguments of \cite{[9]}, we can prove :
\begin{pro} If $A\subset\Omega$ is locally $(m,T)$-pluripolar then $A$ is $(m,T)$-pluripolar.
\end{pro}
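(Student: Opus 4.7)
The plan is to run the standard local-to-global argument for capacities, leveraging the monotonicity property (Properties~3) and countable subadditivity (Properties~4) of $cap_{m,T}$. The statement is about an arbitrary subset $A$, but by the definition of capacity for general sets it suffices to show $cap_{m,T}(K,\Omega)=0$ for every compact $K\subset A$; this will be the target inequality.

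First I would reduce to compact sets. Fix a compact $K\subset A$. By hypothesis, each point $a\in K$ has an open neighbourhood $V_a\subset\Omega$ with $cap_{m,T}(A\cap V_a,V_a)=0$, hence in particular $cap_{m,T}(K\cap V_a,V_a)=0$. By compactness, extract a finite subcover $V_{a_1},\dots,V_{a_N}$ of $K$, and write the Borel decomposition
\[
K=\bigcup_{i=1}^N (K\cap V_{a_i}).
\]

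The key step is a localisation-of-capacity argument: for each $i$, passing from the reference open set $V_{a_i}$ to $\Omega$ the capacity can only decrease, i.e.\ $cap_{m,T}(K\cap V_{a_i},\Omega)\leq cap_{m,T}(K\cap V_{a_i},V_{a_i})$. This is precisely Properties~3 (applied to $K\cap V_{a_i}\subset V_{a_i}\subset\Omega$), and it is built in turn on the obvious fact that any $u\in\mathscr{P}_m(\Omega)$ with $0\leq u\leq 1$ restricts to a competitor on $V_{a_i}$. Consequently $cap_{m,T}(K\cap V_{a_i},\Omega)=0$ for each $i$.

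Finally, countable (in fact finite) subadditivity from Properties~4 yields
\[
cap_{m,T}(K,\Omega)\leq\sum_{i=1}^N cap_{m,T}(K\cap V_{a_i},\Omega)=0.
\]
Taking the supremum over compact $K\subset A$ in the defining formula of $cap_{m,T}(A,\Omega)$ gives $cap_{m,T}(A,\Omega)=0$, which is exactly the claim.

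There is no real obstacle here once the monotonicity in the reference open set (Properties~3) is at hand: it is the only non-formal ingredient, and it itself is a one-line consequence of the definition of $cap_{m,T}$, since $m$-subharmonicity is a local property and hence restricting an $m$-sh test function from $\Omega$ to $V_{a_i}$ preserves membership in the admissible class. The subadditivity and the compactness reduction are routine. The one thing worth double-checking is that each $K\cap V_{a_i}$ is Borel (it is, as the intersection of a compact and an open set), so that Properties~4 genuinely applies.
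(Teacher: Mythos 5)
Your argument is correct and is precisely the standard localization argument the paper intends: it only refers to ``repeating the arguments of \cite{[9]}'', which amount to the same reduction to compact subsets, a finite subcover, the monotonicity in the reference open set (Properties 2 and 3) and finite subadditivity (Properties 4). Nothing further is needed, since $(m,T)$-pluripolarity is defined purely capacitarily and $cap_{m,T}(\cdot,\Omega)$ of a general set is by definition the supremum over its compact subsets.
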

One of the most important properties of locally bounded $m$-sh functions is quasicontinuity with respect to  $cap_{m,T}$ : every $m$-sh locally
bounded function is continuous outside an open set with arbitrarily small capacity $cap_{m,T}$. By adaptation of the proof of \cite{[7]}, we obtain:
\begin{theo}
Let $\Omega$ be a bounded open subset of $\mathbb{C}^{n}, u\in {\mathscr P}_m(\Omega)\cap L_{loc}^\infty(\Omega)$ and $T$ is an $m$-positive closed current of bidimension $(p,p)$ on $\Omega$. Then for every $\varepsilon >0$, there exists an open set ${\mathscr O}$ of $\Omega$ such that $cap_{m,T}({\mathscr O},\Omega) <\varepsilon$ and $u$ is continuous on $\Omega\smallsetminus {\mathscr O}$.
\end{theo}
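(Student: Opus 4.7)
The plan is to mimic the classical Bedford--Taylor--Demailly quasicontinuity argument, substituting the relative operator $(dd^c v)^p \wedge T$ for the usual Monge--Amp\`ere measure and working throughout with $m$-subharmonic functions. By exhausting $\Omega$ by precompact subdomains and spending an $\varepsilon/2^k$-budget on each layer, one reduces to the case where $u$ is bounded on $\Omega$. Regularising via the standard convolutions $u_j := u \star \chi_j$, Prop.~1(2) gives a smooth decreasing sequence in $\mathscr{P}_m(\Omega_j)$ with $u_j \searrow u$. Since each $u_j$ is continuous and $u$ is upper semicontinuous, the function $u_j - u$ is lower semicontinuous, so the sets
\[
G_{j,\delta} := \bigl\{ z \in \Omega_j \, : \, u_j(z) - u(z) > \delta \bigr\}
\]
are open, and the entire task is to fit a well-chosen union of them into an open set of arbitrarily small $cap_{m,T}$.

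The heart of the matter is the following Chern--Levine--Nirenberg-type estimate: for every pair of compact sets $K\subset K'\subset \Omega$ with $K'$ a neighbourhood of $K$, there exists a constant $C=C(K,K',T)>0$ such that, for every $v\in\mathscr{P}_m(\Omega)$ with $0\le v\le 1$,
\[
\int_K (u_j - u)(dd^c v)^p \wedge T \;\le\; C\, \|u_j - u\|_{L^1(K')}.
\]
To prove this, I would fix a cut-off $\chi \in \mathscr{C}_c^\infty(\Omega)$ with $\chi \equiv 1$ on $K$ and support in $K'$, and iteratively apply Stokes' formula, pushing each $dd^c$ off $v$ and onto $u_j - u$ or onto $\chi$. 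Closedness of $T$ together with the compact support of $\chi$ kills the boundary terms at every step; the uniform bound $0\le v\le 1$ keeps the $v$-factors under control. After $p$ iterations one is left with a finite sum of integrals of the shape $\int (u_j - u)\,\Psi\wedge T$, where $\Psi$ is a smooth form built from derivatives of $\chi$ and powers of $dd^c v$, and the claimed $L^1(K')$ estimate drops out. Since $u$ is locally integrable, the monotone convergence theorem yields $\|u_j - u\|_{L^1(K')}\to 0$, so $\int_K(u_j-u)(dd^c v)^p\wedge T\to 0$ \emph{uniformly in $v$}.

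The remainder is bookkeeping. By Chebyshev's inequality applied inside Properties~1,
\[
cap_{m,T}(G_{j,\delta}\cap K) \;\le\; \frac{1}{\delta}\sup_{0\le v\le 1}\int_K (u_j-u)(dd^c v)^p\wedge T \;\xrightarrow[j\to\infty]{}\;0.
\]
Exhausting $\Omega$ by compact sets $K_k\nearrow \Omega$, choose $j_k$ so that $cap_{m,T}(G_{j_k,1/k}\cap K_k)<\varepsilon\cdot 2^{-k}$ and set $\mathscr{O}:=\bigcup_k (G_{j_k,1/k}\cap K_k)$. Subadditivity (Properties~4) gives $cap_{m,T}(\mathscr{O})<\varepsilon$, and $\mathscr{O}$ is open as a union of open sets. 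To conclude continuity on $\Omega\setminus\mathscr{O}$, fix $z_0\in\Omega\setminus\mathscr{O}$ and $\eta>0$; pick $k$ with $3/k<\eta$ and $z_0\in\operatorname{int}(K_k)$. On $K_k\setminus\mathscr{O}$ one has $0\le u_{j_k}-u\le 1/k$, and $u_{j_k}$ is continuous at $z_0$, so the triangle inequality yields $|u(z)-u(z_0)|\le 3/k<\eta$ for $z\in\Omega\setminus\mathscr{O}$ sufficiently close to $z_0$.

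The principal obstacle is the $L^1$ estimate of the second paragraph: one must verify that every intermediate current produced by peeling a $dd^c$ off $v$ pairs meaningfully with $T$ (which works because $(dd^c v)^k\wedge T$ is a well-defined $m$-positive current by the hessian theory recalled at the start of \S3) and that no Stokes contribution survives (which relies on $T$ being closed and $\chi$ having compact support). Once that bound is secured, the rest of the scheme is formally identical to the adaptation of \cite{[7]} announced by the authors.
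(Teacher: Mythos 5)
Your overall scheme (regularize, show $\sup_{0\le v\le 1}\int_K(u_j-u)(dd^cv)^p\wedge T\to 0$, apply Chebyshev, exhaust $\Omega$ with an $\varepsilon 2^{-k}$ budget, and deduce continuity of $u$ on the complement) is exactly the adaptation of \cite{[7]} that the paper invokes -- the paper gives no details beyond that citation -- and your final bookkeeping and continuity argument are fine. The genuine problem is the key estimate of your second paragraph: the bound $\int_K(u_j-u)(dd^cv)^p\wedge T\le C\,\|u_j-u\|_{L^1(K')}$ with the $L^1$ norm taken against \emph{Lebesgue} measure is false for precisely the currents this theorem is about. After the integrations by parts, the residual terms pair $u_j-u$ against currents of the form $T\wedge(dd^cv)^k\wedge(\hbox{smooth forms})$, whose trace measures are in general singular with respect to Lebesgue measure (take $T=[z_1=0]\wedge\beta^{n-m}$: its trace is carried by $\{z_1=0\}$, a Lebesgue-null set), so smallness of $\|u_j-u\|_{L^1(K',d\lambda)}$ controls nothing, and your appeal to local integrability of $u$ does not connect to the quantity you need. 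Your own phrase that $\Psi$ is ``a smooth form built from derivatives of $\chi$ and powers of $dd^cv$'' is symptomatic: such a $\Psi$ is not smooth, and its pairing with $(u_j-u)\,T$ is exactly what remains to be estimated.

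The correct version, which is what \cite{[7]} proves for $m=n$ and what transfers here thanks to the hessian Chern--Levine--Nirenberg bounds of \cite{[10]} recalled at the start of Section 3, is an induction on the number of factors $dd^cv$: one bounds $\int_K(u_j-u)\,(dd^cv)^k\wedge\beta^{p-k}\wedge T$ by the analogous quantity with $k-1$ factors, up to gradient cross terms handled by the Cauchy--Schwarz inequality for the positive bilinear form attached to $(dd^cv)^{k-1}\wedge\beta^{p-k}\wedge T$ and up to the locally uniform mass bounds $\int_{K'}(dd^cv)^k\wedge\beta^{p-k}\wedge T\le C_{K'}$ valid for all $v\in{\mathscr P}_m(\Omega)$ with $0\le v\le1$; the base case is $\int_{K''}(u_j-u)\,\beta^{p}\wedge T$, which is independent of $v$ and tends to $0$ by monotone convergence \emph{with respect to the trace measure} $\beta^p\wedge T$, since $u_j\downarrow u$ pointwise and all functions are uniformly bounded on $K''$. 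You also pass too quickly over the term where $dd^c$ lands on $u_j-u$: $dd^c(u_j-u)$ is only a difference of positive currents, and disposing of it needs a further integration by parts (or the monotone convergence theorem for the hessian operator applied to $u_j\downarrow u$), not merely the vanishing of Stokes boundary terms. With the base measure corrected from Lebesgue to the trace measure and these two steps carried out, your argument becomes the intended proof.
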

\begin{rem} \
\begin{enumerate}
\item In \cite{[10]} Lu, was established Thm.1 for the particular case $T=\beta^{ n-m }$ but $u$ is only $m$-sh. In our situation the required assumption {\it $u$ locally bounded} is essential as shown the following example:
$$\Omega=\Delta^3:={\rm the\ polydisc\ of}\ \mathbb{C}^3,\ \ T = [z_1=0]\wedge\beta,\ \ u(z_1,z_2)=\log |z_1|.$$
It is clear that $T$ is a $2$-positive closed current. Moreover, $u$ is a $2$-sh function and is discontinous on the support of $T$, which has a strictly positive capacity.
\item Let $u$ be an $m$-sh locally bounded function in $\Omega$. Assume that $X$ is an analytic subset of $\Omega$ of dimension $p$, with $p+m\geq n$. It is important to point out that the Theorem of quasicontinuity of \cite{[10]} don't gives any information about the regularity of $u$ near $X$. By applying Thm.1 for $T=[X]\wedge\beta^{n-m}$, we see that $u$ is continuous on $X$ minus a subset of arbitrarily small volume in $X$.
\end{enumerate}
\end{rem}
Before terminating this discussion, we state some interesting related problems pertaining to the notion of the capacity $cap_{m,T}$:
\vskip0.1cm
{\bf P1.} Assume that ${(K_j)}_j$ is a decreasing sequence of compact subsets of $\Omega$, then it is clear that

\hskip0.9cm $cap_{m,T}(K_{j})$ decreases. Is that $cap_{m,T} (\cap
K_{j})=\displaystyle\lim_{j\l +\infty}cap_{m,T} (K_{j})?$
\vskip0.1cm
{\bf P2.} Let $K$ be a compact subset of $\Omega$. Is there a function $u\in{\mathscr P}_m\left(\Omega,[0,1]\right)$, such that

\hskip0.9cm $cap_{m,T} (K)=\int_K(dd^c u)^p\wedge T ?$
\vskip0.1cm
{\bf P3:} Can we caracterize the $(m,T)$-pluripolar subsets of $\Omega$?
\vskip0.2cm
It should be noted here that the above problems was resolved by \cite{[2]} for $m=n, T=1$ and by \cite{[1]} and \cite{[10]} for $T=\beta^{n-m}$.
\subsection{Continuity of the complex hessian operator}
Building on the work of Demailly \cite{[8]}, we show that the complex hessian  operator is well defined for $m$-sh functions which are bounded only near $\partial\Omega\cap{\rm Supp}T$. More precisely, we prove:
\begin{theo} Let $\Omega$ be a bounded strictly pseudoconvex open subset of ${\mathbb{C}}^n$. Assume that $u_1, \cdots, u_k$ are $m$-sh functions on $\Omega$ such that each $u_j$ is bounded near $\partial\Omega\cap{\rm Supp}T$, where $T$ is an $m$-positive closed current of bidimension $(p,p)$ in $\Omega$. Then, by induction the following operator
$$dd^cu_1\wedge\cdots\wedge dd^cu_k\wedge T=dd^c(u_1dd^cu_2\wedge\cdots\wedge dd^cu_k\wedge T),$$
is well defined in $\Omega$. Moreover, assume that $u_1^j,\cdots, u_k^j$ are decreasing sequences of $m$-sh functions converging pointwise to $u_1,\cdots,u_k$, then we have
\begin{enumerate}
\item $u_1^j dd^cu_2^j\wedge\cdots\wedge dd^cu_k^j\wedge T$ converges weakly to $ u_1
dd^cu_2\wedge\cdots\wedge dd^cu_k\wedge T$.
\item $dd^cu_1^j\wedge\cdots\wedge dd^cu_k^j\wedge T$ converges weakly to $dd^cu_1\wedge\cdots\wedge dd^cu_k\wedge T$.
\end{enumerate}
\end{theo}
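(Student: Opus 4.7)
The plan is to follow the extension strategy of Demailly \cite{[8]} (written for $m=n$, $T=1$), adapting it to the hessian setting with a general $m$-positive closed current $T$. The core idea is to truncate each $u_j$ near the boundary of $\Omega$, thereby reducing, on every relatively compact subdomain, to Lu's framework \cite{[10]} of locally bounded $m$-sh functions wedged against an $m$-positive closed current, as recalled at the start of this section.

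Since $\Omega$ is strictly pseudoconvex I would first fix a smooth strictly psh exhaustion $\rho:\Omega\to[-1,0)$ with $\rho(z)\to 0$ as $z\to\partial\Omega$. For any relatively compact subdomain $\Omega'\Subset\Omega$ and a fixed open neighborhood $V$ of $\partial\Omega\cap\Supp T$ on which every $u_j$ is bounded, I choose constants $A_j,C_j>0$ such that the modifications
\[
\tilde u_j:=\max\bigl(u_j,\,A_j\rho-C_j\bigr)
\]
lie in $\mathscr{P}_m(\Omega)\cap L^\infty_{\mathrm{loc}}(\Omega)$, coincide with $u_j$ on a neighborhood of $\Omega'$, and coincide with the smooth function $A_j\rho-C_j$ on a sub-neighborhood of $\partial\Omega\cap\Supp T$ inside $V$. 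Since $\tilde u_j$ is globally locally bounded, by \cite{[10]} the current $dd^c\tilde u_1\wedge\cdots\wedge dd^c\tilde u_k\wedge T$ is well defined on $\Omega$, and I declare
\[
dd^c u_1\wedge\cdots\wedge dd^c u_k\wedge T\,\big|_{\Omega'}
:=dd^c\tilde u_1\wedge\cdots\wedge dd^c\tilde u_k\wedge T\,\big|_{\Omega'}.
\]
The inductive form is retrieved by first running this construction on $u_2,\dots,u_k$, producing an $m$-positive closed current $S$, and then observing that the $m$-sh (hence subharmonic) function $u_1$ is in $L^1_{\mathrm{loc}}$ of the trace measure of $S$, so that $u_1\cdot S$ is a well-defined distribution whose $dd^c$ recovers the previous definition.

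Independence of the truncation I would handle as follows: for two admissible choices $\tilde u_j,\tilde u_j'$ agreeing with $u_j$ on a neighborhood of $\Omega'$, I insert the common modification $\max(\tilde u_j,\tilde u_j')$ and use a Bedford--Taylor / Demailly-type integration by parts against smooth test forms, exploiting that $T$ is closed and $m$-positive and that the discrepancies live outside $\Omega'$. For the convergence assertions, a decreasing sequence $u_j^\ell\searrow u_j$ is uniformly bounded on $V$ by $u_j^1$ from above and by $u_j$ from below, so a single pair $(A_j,C_j)$ works for all $\ell$ on a fixed $\Omega'$, and the truncations $\tilde u_j^\ell$ form decreasing sequences of locally bounded $m$-sh functions converging pointwise to $\tilde u_j$. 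Assertion (2) then follows directly from Lu's continuity theorem on decreasing locally bounded sequences \cite{[10]}. Assertion (1) follows by first applying (2) to $u_2,\dots,u_k$ to obtain the weak convergence $S^\ell\to S$ of the intermediate $m$-positive closed currents, and then combining with the upper semicontinuous decrease $u_1^\ell\searrow u_1$ via a standard Lebesgue-type argument to pass to the limit in $u_1^\ell\,S^\ell$.

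The main obstacle will be the independence/patching step when some $u_j$ has a non-empty polar set inside $\Omega'$: the truncation equality $\tilde u_j=u_j$ holds pointwise only off that polar set, and one must check that the hessian currents detect no contribution from such a set. The route is to approximate each $u_j$ by its smooth decreasing regularizations $u_j\star\chi_\varepsilon$ (Proposition 1(2)), verify independence in the smooth case via Stokes' theorem, and then pass to the limit using assertion (2) itself. In effect, the well-definedness and the continuity statements must be proved simultaneously by a joint induction on $k$, exactly as in Demailly's original argument.
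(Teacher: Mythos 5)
Your central device --- the truncation $\tilde u_j=\max(u_j,A_j\rho-C_j)$ chosen so that $\tilde u_j$ ``coincides with $u_j$ on a neighborhood of $\Omega'$'' --- cannot do what you need it to do. The hypothesis of the theorem only gives boundedness of $u_j$ near $\partial\Omega\cap{\rm Supp}\,T$; the $u_j$ may perfectly well have poles on a compact subset of ${\rm Supp}\,T$ inside $\Omega'$ (take $\Omega$ the unit ball, $T=\beta^{n-m}$, $u_j=\log|z|$). At such interior poles no choice of $A_j,C_j$ makes $\max(u_j,A_j\rho-C_j)=u_j$ on a neighborhood, since the truncation is bounded below on $\Omega'$ while $u_j$ is not. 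So your construction only reproduces Lu's already known locally bounded case, and the genuinely new situation the theorem addresses --- unboundedness on a compact part of ${\rm Supp}\,T$ --- is exactly where it breaks down. Your closing remedy makes this worse rather than better: the hope that ``the hessian currents detect no contribution from such a set'' is false in general (for $u=\log|z|$ the measure $(dd^cu)^m\wedge\beta^{n-m}$ charges the origin), and invoking assertion (2) to validate the definition obtained by letting the truncation level go to $-\infty$ is circular, because (2) for unbounded limits is part of what is to be proved. Likewise, in your inductive step the claim that $u_1$ is automatically $L^1_{\rm loc}$ for the trace measure of $S$ is not free: it is precisely the quantitative heart of the theorem (it fails without the boundary hypothesis, e.g. $u_1=\log|z_1|$, $S=[z_1=0]$), and it must be proved, not asserted.

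What is missing, and what the paper actually does, is the mass-estimate step: write $\Omega=\{\rho<0\}$, use $\max(u_1,s)\downarrow u_1$ together with Stokes and the boundedness of the $u_j$ on a collar of $\partial\Omega\cap{\rm Supp}\,T$ to show that $u_1T$ (and then, by induction, $u_1\,dd^cu_2\wedge\cdots\wedge dd^cu_k\wedge T$) has locally bounded mass; the geometric separation $\overline{\mathscr U}_T\cap\overline{\mathscr U}_u\Subset\Omega$ localizes the unbounded locus on ${\rm Supp}\,T$ to a compact set, after which the operator is defined by $dd^c$ of the previous stage, and the convergence statements are obtained by running Lu's arguments on the collar ${\mathscr U}_T\smallsetminus\Omega_\delta$ where the functions are bounded and the Chern--Levine--Nirenberg/Demailly bounds on the relatively compact part. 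Without an estimate of this type your proposal has no mechanism to control what happens across the interior polar sets, which is the actual content of the theorem.
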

\begin{proof} We argue as in \cite{[8]}, then without loss of generality we may assume that $\Omega=\{\rho<0\}$, where $\rho$ is a smooth and strictly psh function on $\overline\Omega$. If $k=1$, by using the function $\max(u_1,s)$, we proceed as in \cite{[8]} to prove that $u_1T$ has locally bounded mass in $\Omega$. By hypothesis, there exist an open set ${\mathscr U}_T$ containing ${\rm Supp}T\cap\Omega$ (shrinking $\Omega$ if necessary) and an open set ${\mathscr U}_u$ containing all polar sets $u_j^{-1}(-\infty)\cap\Omega$ such that $\overline{\mathscr U}_T\cap\overline{\mathscr U}_u\Subset\Omega$ and $u_j$ are bounded on ${\mathscr U}_T\smallsetminus{\mathscr U}_u$. Then by using induction, one can define the current
$$dd^cu_1\wedge\cdots\wedge dd^cu_k\wedge T=dd^c(u_1dd^cu_2\wedge\cdots\wedge dd^cu_k\wedge T).$$
 Now, let $\delta>0$ sufficiently small such that $\overline{\mathscr U}_T\cap\overline{\mathscr U}_u\subset\Omega_\delta$, here $\Omega_\delta=\{\rho<-\delta\}$. In order to complete the proof, we shall repeat the arguments of Lu \cite{[10]} on ${\mathscr U}_T\smallsetminus\Omega_\delta$.
\end{proof}
\begin{rem}
It is interesting to note that if $u_j$ are locally bounded, then we recover a result of \cite{[10]}. As a consequence, we mension that the wedge product $\gamma_1\wedge...\wedge\gamma_k\wedge T,$ is well defined for $\gamma_j=dd^cu_j$ or $\gamma_j=dv_j\wedge d^cw_j+dw_j\wedge d^cv_j$, where $u_j,v_j$ et $w_j$ are $m$-sh locally bounded functions. In fact, we just see that
$$2(dv_j\wedge d^cw_j+dw_j\wedge d^cv_j)=dd^c(v_j+w_j)^2-dd^cv_j^2-dd^cw_j^2-v_jdd^cw_j-w_jdd^cv_j .$$
\end{rem}
Notice that the notion of convergence in capacity become a good tools for the convergence of the complex hessian  operator. Recall that a sequence of functions $(u_j)_j$ defined on $\Omega$ is said converges to a function $u$ in the capacity $cap_{m, T}$ on $E$ if for every $\delta>0$, we have :
$$\ds\lim_{j\rightarrow +\infty} cap_{m, T}( E\cap \left\{|u-u_j|> \delta\right\})=0.$$
\hskip0.3cm The purpose now is to define the complex hessian  operator $(dd^c u)^{p}\wedge T$, for a given $m$-sh function $u$ which is the limit in the sense of $cap_{m,T}$-capacity of $u_j$, where $u_j$ are $m$-sh functions uniformly bounded near $\partial\Omega\cap{\rm Supp}T$ and such that the measures $(dd^c u_j)^{p}\wedge T$ have small mass on any set of small $cap_{m,T}$-capacity. Recall that a sequence of positive measures $(\mu_j)_j$ is said to be {\it uniformly absolutely continuous} with respect to $cap_{m,T}$ in a set $E$ (i.e, $\mu_j\ll cap_{m,T}$), if for each $\varepsilon>0$ there exists a constant $\delta>0$ such that for each Borel subsets $F\subset E$ with $cap_{m,T}(F)<\delta$ the inequality $\mu_j(F)<\varepsilon$ holds for all $j$. Therefore, we prove :
\begin{theo} Let $\Omega$ be a bounded strictly pseudoconvex open subset of ${\mathbb{C}}^n$, $T$ an $m$-positive closed current of bidimension $(p,p)$ on $\Omega$ and $u\in{\mathscr P}_m(\Omega)$ such that $u$ is bounded near $\partial\Omega\cap{\rm Supp}T$. Assume that
\begin{enumerate}
\item There exists a sequence $(u_{j})_j$ of bounded $m$-sh functions in $\Omega$ such that $u_j$ are uniformly bounded near $\partial\Omega\cap{\rm Supp}T$ for all $j$ and $u_j\l u$ in $cap_{m,T}$ on each $E\Subset\Omega$.
\item  $(dd^c u_j)^{p}\wedge T\ll cap_{m,T}$ uniformly on each subset $E\Subset\Omega$.
\item $\ds\lim_{k\l +\infty}cap_{m,T}(\{u<-k\})=0$.
\end{enumerate}
Then, $(dd^c u_{j})^{p}\wedge T$ converges weakly to $(dd^c u)^{p}\wedge T$ in $\Omega$ and $(dd^c u)^{p}\wedge T\ll cap_{m,T}$ on each subset $E\Subset\Omega$.
\end{theo}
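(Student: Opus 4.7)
The plan is to reduce the statement to the bounded case by truncation and then combine three ingredients: a convergence-in-capacity theorem for uniformly bounded $m$-sh functions, the monotone convergence result just established (Theorem 2), and truncation-error estimates driven by hypotheses (2) and (3). Set $u_j^k:=\max(u_j,-k)$ and $u^k:=\max(u,-k)$. The elementary bound $|u_j^k-u^k|\le|u_j-u|$ together with hypothesis (1) gives $u_j^k\to u^k$ in $cap_{m,T}$ on each $E\Subset\Omega$, while $u_j^k$ and $u^k$ are uniformly bounded for fixed $k$ and $u^k\searrow u$. Since $u$ is bounded near $\partial\Omega\cap{\rm Supp}\,T$, the $u^k$ eventually coincide with $u$ there, so Theorem 2 yields $(dd^c u^k)^p\wedge T\to(dd^c u)^p\wedge T$ weakly.

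The first main step is the auxiliary bounded case: if $v_j,v\in\mathscr{P}_m(\Omega)$ are uniformly bounded, say $-M\le v_j,v\le 0$, and $v_j\to v$ in $cap_{m,T}$ on compacts, then $(dd^c v_j)^p\wedge T\to(dd^c v)^p\wedge T$ weakly. This is done by induction on $p$: for a smooth compactly supported test function $\phi$, integration by parts gives
$$\int\phi\,(dd^c v_j)^p\wedge T=\int v_j\,dd^c\phi\wedge(dd^c v_j)^{p-1}\wedge T,$$
and the difference with the analogous identity for $v$ is split along $\{|v_j-v|\le\varepsilon\}$ and its complement. The main quantitative tool is the bound $\int_K(dd^c w)^q\wedge T\le M^q\,cap_{m,T}(K)$ for bounded $m$-sh $w$ with $-M\le w\le 0$ and $q\le p$, which follows because $1+w/M$ is admissible in Definition 3. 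Quasicontinuity (Theorem 1) is then invoked to approximate $v$ by continuous functions off a set of small capacity, thereby controlling the ``small'' part of the splitting.

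For the truncation error, note that $(dd^c u_j)^p\wedge T$ and $(dd^c u_j^k)^p\wedge T$ agree as measures on the open set $\{u_j>-k\}$, so on any $E\Subset\Omega$
$$\bigl|(dd^c u_j)^p\wedge T-(dd^c u_j^k)^p\wedge T\bigr|(E)\le(dd^c u_j)^p\wedge T\bigl(\{u_j\le -k\}\cap E\bigr)+(dd^c u_j^k)^p\wedge T\bigl(\{u_j\le -k\}\cap E\bigr).$$
The geometric inclusion $\{u_j\le -k\}\subset\{u\le -k+\varepsilon\}\cup\{|u_j-u|>\varepsilon\}$, combined with subadditivity of capacity, hypothesis (1) and hypothesis (3), shows that $cap_{m,T}(\{u_j\le -k\}\cap E)$ can be made arbitrarily small by taking first $k$ and then $j$ large. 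Hypothesis (2) controls the first mass; the capacity estimate from paragraph 2 applied to $u_j^k$ controls the second. An analogous bound handles $(dd^c u)^p\wedge T-(dd^c u^k)^p\wedge T$ using hypothesis (3) directly.

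Combining: given $\phi$ and $\delta>0$, first pick $k$ so that both truncation errors tested against $\phi$ are smaller than $\delta$ uniformly in $j$, then apply the bounded case so that $\bigl|\langle(dd^c u_j^k)^p\wedge T-(dd^c u^k)^p\wedge T,\phi\rangle\bigr|<\delta$ for all large $j$. The absolute continuity $(dd^c u)^p\wedge T\ll cap_{m,T}$ on each $E\Subset\Omega$ then follows from lower semicontinuity of positive measures on open sets under weak convergence, uniform absolute continuity from hypothesis (2), and inner regularity of the capacity. I expect the main obstacle to be the bounded case of paragraph 2: the induction requires a delicate interplay between quasicontinuity (to localize the set where $|v_j-v|>\varepsilon$) and the capacity estimate (to dominate the hessian mass on small-capacity sets), and is precisely where the closedness and $m$-positivity of $T$ enter essentially via the integration by parts.
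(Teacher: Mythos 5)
Your overall architecture matches the paper's: truncate ($u_j^k=\max(u_j,-k)$, $u^k=\max(u,-k)$), handle the uniformly bounded case by the convergence-in-capacity argument of Dabbek--Elkhadhra with quasicontinuity (the paper's Step 1 does exactly this, citing \cite{[7]} and Theorem 1), use Theorem 2 for the $u$-side truncation error, and control the $u_j$-side truncation error via hypotheses (1)--(3). However, there is a genuine gap in your estimate of the second truncation mass, $(dd^c u_j^k)^p\wedge T\bigl(\{u_j\le -k\}\cap E\bigr)$. Your proposed tool is the bound $\int_K(dd^c w)^p\wedge T\le M^p\,cap_{m,T}(K)$ with $-M\le w\le 0$; applied to $u_j^k$ this forces $M=k$, so you get at best $k^p\,cap_{m,T}(\{u_j\le -k\}\cap E)$. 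Hypotheses (1) and (3) give no rate of decay of $cap_{m,T}(\{u<-k\})$ in $k$, so the factor $k^p$ cannot be beaten and you cannot ``pick $k$ so that the truncation error is smaller than $\delta$ uniformly in $j$.'' Hypothesis (2) does not help here either, since it concerns $(dd^c u_j)^p\wedge T$, not the truncated measures. This is precisely where the paper inserts its Xing-type comparison principle (Lemma 2): writing the weight $\left(-1-\frac{2u_j}{c}\right)^p\ge 1$ on $\{u_j\le -c\}$ and rescaling, it gets
$$\int_{\{u_j\leq -c\}}(dd^c\max(u_j,-c))^{p}\wedge T\leq 2^p\,p!\int_{\{u_j<-c/2\}}(dd^cu_j)^{p}\wedge T,$$
with a constant independent of $c$, so that \emph{both} truncation masses are reduced to the mass of $(dd^cu_j)^p\wedge T$ on a set of uniformly small capacity, where hypothesis (2) applies. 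Without this (or an equivalent) estimate your scheme does not close.

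Two smaller points. First, your justification that $(dd^c u_j)^p\wedge T$ and $(dd^c u_j^k)^p\wedge T$ ``agree as measures on the open set $\{u_j>-k\}$'' is not quite right: $u_j$ is only upper semicontinuous, so $\{u_j>-k\}$ need not be open and plain locality of the hessian operator does not apply; the correct statement is the paper's Lemma 3, ${\1}_{\{u_j>-c\}}(dd^c\max(u_j,-c))^p\wedge T={\1}_{\{u_j>-c\}}(dd^cu_j)^p\wedge T$, proved via quasicontinuity. Second, your auxiliary inequality $\int_K(dd^cw)^q\wedge T\le M^q\,cap_{m,T}(K)$ for $q<p$ does not typecheck as stated ($(dd^cw)^q\wedge T$ is not a measure for $q<p$); in the bounded-case induction one must work with mixed products against the remaining bounded $m$-sh factors, as in \cite{[7]}. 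These are repairable, but the missing comparison-principle estimate in the truncation step is the essential omission.
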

In particular when $m=n$, $u_j$ are uniformly bounded in $\Omega$ and $u$ is locally bounded in $\Omega$, we recover a result of \cite{[7]}. Also, Thm.3 generalizes a theorem of \cite{[13]} for the trivial current $T=1$ and $m=n$. For the proof we need two intermediate lemmas. The first is a Xing's type comparison principle for $m$-positive currents which extends the one given by \cite{[9]} for the border case $m=n$.
 \begin{lem}
Let $\Omega$ be an open bounded subset of $\mathbb{C}^{n}$, $T$ is an $m$-positive closed current of bidimension $(p,p),\ p\geq 1$ on $\Omega$, and let $u,v\in{\mathscr P}_m(\Omega)\cap L^\infty(\Omega)$. Assume that for each $w\in\partial\Omega\cap{\rm Supp}T$, $\ds\liminf_{\xi\rightarrow w}\left(u(\xi) -v(\xi)\right)\geq 0,$ then for any constant $r \geq 1$ and all $w_j\in{\mathscr P}_m(\Omega)$, with $0\leq w_j\leq 1$, $j=1,...,p$, we have
$$\frac{1}{p!}\displaystyle\int_{\left\{u<v\right\}}(v-u)^pdd^cw_1\wedge\cdots\wedge dd^cw_p\wedge T +\displaystyle\int_{\left\{u<v\right\}}(r-w_1)(dd^cv)^{p}\wedge T \leq \displaystyle\int_{\left\{u<v\right\}}(r-w_1)(dd^c u)^{p}\wedge T.$$
\end{lem}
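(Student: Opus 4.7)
I would prove the inequality by induction on $p$, adapting Xing's original strategy to the presence of the ambient $m$-positive closed current $T$ and relying on Theorem 2 to give meaning to all the hessian operations that appear. The first step is to make the relevant sub-level set compact in $\Omega$ relative to $\mathrm{Supp}\,T$: by the boundary hypothesis $\liminf_{\xi\to w}(u(\xi)-v(\xi))\geq 0$ for $w\in\partial\Omega\cap\mathrm{Supp}\,T$, replacing $v$ by $v_\epsilon:=v-\epsilon$ for $\epsilon>0$ forces $\overline{\{u<v_\epsilon\}}\cap\mathrm{Supp}\,T\Subset\Omega$. It is enough to prove the inequality with $v_\epsilon$ in place of $v$ and then let $\epsilon\to 0^+$, using monotone convergence on the nested sets $\{u<v_\epsilon\}\nearrow\{u<v\}$. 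This compactness is precisely what legalizes every Stokes integration against the closed current $T$ below.

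For the base case $p=1$, I would set $\tilde u:=\max(u,v_\epsilon)\in{\mathscr P}_m(\Omega)\cap L^\infty(\Omega)$, so $\tilde u-u\geq 0$ has support intersecting $\mathrm{Supp}\,T$ in a compact subset of $\Omega$. Since $T$ is closed, Stokes' theorem yields
$$\int_\Omega(\tilde u-u)\,dd^c w_1\wedge T=\int_\Omega w_1\,(dd^c\tilde u-dd^c u)\wedge T,$$
and both sides concentrate on $\{u<v_\epsilon\}$, where $\tilde u=v_\epsilon$ so that $dd^c\tilde u=dd^c v$. Combining this identity with $r$ times the classical comparison principle
$$\int_{\{u<v_\epsilon\}}dd^c v\wedge T\;\leq\;\int_{\{u<v_\epsilon\}}dd^c u\wedge T,$$
valid for bounded $m$-sh $u,v$ and $m$-positive closed $T$, produces the $p=1$ inequality after rearrangement.

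For the induction step I would use the telescoping identity
$$(dd^c u)^p-(dd^c v)^p=\sum_{j=0}^{p-1}dd^c(u-v)\wedge(dd^c u)^j\wedge(dd^c v)^{p-1-j},$$
multiply it by $(r-w_1)$, integrate against $T$ over $\{u<v_\epsilon\}$, and perform one integration by parts to transfer a $dd^c$ onto $r-w_1$, reducing matters to $(p-1)$-type expressions to which the induction hypothesis applies. A symmetrization over the orderings of $w_1,\dots,w_p$ generates the combinatorial factor $1/p!$ in front of $(v-u)^p\,dd^c w_1\wedge\cdots\wedge dd^c w_p\wedge T$.

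The main obstacle I expect is the rigorous justification of the Stokes steps: since $T$ is only $m$-positive (not strongly positive) and since $u,v$ are bounded only near $\partial\Omega\cap\mathrm{Supp}\,T$ rather than on all of $\partial\Omega$, the usual Bedford--Taylor cut-off arguments do not transport directly. My plan to handle this is to combine the compactness gained in the boundary-approximation step with Theorem 2 (which guarantees that the intermediate currents $(dd^c u)^j\wedge(dd^c v)^{p-1-j}\wedge T$ and their $(r-w_1)$-weighted variants are well-defined $m$-positive Radon measures), then to regularize each $w_j$ by convolution and truncate $\tilde u-u$ by $\max(\cdot,-s)$, $s\to 0^+$, producing smooth compactly supported test functions on which Stokes' theorem applies without reservation.
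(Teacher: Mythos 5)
Your overall strategy (shift $v$ to $v_\varepsilon=v-\varepsilon$ so that $\{u<v_\varepsilon\}$ stays away from $\partial\Omega\cap{\rm Supp}\,T$, use $\max(u,v_\varepsilon)$ and Stokes against the closed current, then let $\varepsilon\downarrow 0$ by monotone convergence) is the right one and is what the paper, following the arguments of [9], intends. But already in your base case there is an inaccuracy: the claim that $\int_\Omega w_1\,(dd^c\tilde u-dd^cu)\wedge T$ ``concentrates on $\{u<v_\varepsilon\}$'' is false in general. By the locality Lemma 3 the measure $dd^c\max(u,v_\varepsilon)\wedge T-dd^cu\wedge T$ vanishes on $\{u>v_\varepsilon\}$, but it may charge $\{u=v_\varepsilon\}$, where one only knows it is nonnegative. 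The correct argument keeps that contribution, bounds it using $0\le w_1\le 1$ together with the fact that the difference measure has total mass zero (Stokes with a cut-off equal to $1$ near ${\rm Supp}(\tilde u-u)\cap{\rm Supp}\,T$); this is precisely where the quantity $\int_{\{u<v_\varepsilon\}}(dd^cu-dd^cv)\wedge T\ge 0$ enters. As written, your identity silently drops a term of uncontrolled size, although the step can be repaired.

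The genuine gap is in the induction step. The hard content of the inequality is the passage from $(v-u)^p\,dd^cw_1\wedge\cdots\wedge dd^cw_p\wedge T$ to $(v-u)\,dd^cw_1\wedge(dd^cu)^{p-1}\wedge T$ with the factor $p!$, and this is obtained by iterating the weighted integration-by-parts estimate $\int (v-u)^k\,dd^cw\wedge S\le k\int(1-w)(v-u)^{k-1}dd^cu\wedge S$ (the paper's Lemma 4, Xing's key lemma), applied with $S$ running through the currents $dd^cw_{i}\wedge\cdots\wedge T$, and only afterwards does one insert the telescoping sum $\sum_{s}(dd^cu)^s\wedge(dd^cv)^{k-1-s}$ and integrate by parts once against $w_1$ --- exactly as the paper carries out the analogous computation in Step 1 of the proof of Theorem 6. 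Your plan replaces this engine by ``telescoping plus induction on $p$ plus symmetrization,'' and that does not close: applying the $(p-1)$ case with $T$ replaced by $dd^cu\wedge T$ or $dd^cv\wedge T$ produces mixed terms such as $(dd^cv)^{p-1}\wedge dd^cu\wedge T$ rather than the pure powers $(dd^cu)^p\wedge T$ and $(dd^cv)^p\wedge T$ appearing in the statement, and averaging over orderings of $w_1,\dots,w_p$ does not generate a factorial --- the $p!$ comes from the chain of constants $p,p-1,\dots,1$ in the iterated lemma. Without a Lemma-4-type estimate your argument has no mechanism for comparing the mixed term $dd^cw_1\wedge\cdots\wedge dd^cw_p\wedge T$ with the pure Hessian powers, so the induction step as proposed would fail. (A minor remark: since $u,v\in L^\infty(\Omega)$ and $0\le w_j\le 1$, the truncations $\max(\cdot,-s)$ in your last paragraph are unnecessary; the tools actually needed are the quasicontinuity Theorem 1, the convergence results for bounded $m$-sh functions, and Lemma 3.)
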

\begin{proof} Using Thm.2 in the case of bounded $m$-sh functions combined with the same arguments used by \cite{[9]}, we can easily obtain the required inequality.
\end{proof}
The following lemma generalizes the well-known result of Bedford-Taylor for the particular case $m=n$ and $T=1$, as well as the result of Lu \cite{[10]} for any $m$ and the current $T=\beta^{n-m}$.
\begin{lem} Let $T$ be an $m$-positive closed current of bidimension $(p,p)$ in an open subset $\Omega$ of $\bC^n$ and let $u,v\in{\mathscr P}_m(\Omega)\cap L_{loc}^\infty(\Omega)$. Then, we have
$${\1}_{\{u>v\}} (dd^c(\max(u,v)))^p\wedge T = {\1}_{\{u>v\}}(dd^c u)^p\wedge T .$$
\end{lem}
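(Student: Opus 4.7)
The plan is to adapt the classical Bedford--Taylor argument to the $m$-sh setting: reduce to the continuous case by regularization, use the continuity of the hessian operator on bounded decreasing sequences (the locally bounded case of Theorem 2, i.e.\ Lu's theorem), observe that the identity is trivial on the open set where $u>v$ strictly when $u,v$ are continuous, and propagate through a plateau approximation of $\mathbf{1}_{\{u>v\}}$. Concretely, set $w:=\max(u,v)\in\mathscr{P}_m(\Omega)\cap L^\infty_{loc}(\Omega)$ and, by Proposition 1(2), take smooth decreasing approximations $u_j:=u\star\chi_{1/j}\searrow u$ and $v_j:=v\star\chi_{1/j}\searrow v$; set $w_j:=\max(u_j,v_j)$, which is continuous $m$-sh with $w_j\searrow w$. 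Lu's continuity theorem gives weak convergence $(dd^c u_j)^p\wedge T\to(dd^c u)^p\wedge T$ and $(dd^c w_j)^p\wedge T\to(dd^c w)^p\wedge T$ as Radon measures on $\Omega$. Since $u_j,v_j$ are continuous, $G_j:=\{u_j>v_j\}$ is open and $w_j\equiv u_j$ on $G_j$, so $(dd^c w_j)^p\wedge T=(dd^c u_j)^p\wedge T$ as currents on $G_j$. Introducing a continuous plateau $\eta_\delta:\mathbb{R}\to[0,1]$ with $\eta_\delta\equiv 0$ on $(-\infty,0]$ and $\eta_\delta\equiv 1$ on $[\delta,+\infty)$, the function $\eta_\delta(u_j-v_j)$ is continuous and supported in $G_j$, yielding for every $\varphi\in\mathscr{C}^\infty_c(\Omega)$
$$\int\varphi\,\eta_\delta(u_j-v_j)\,(dd^c w_j)^p\wedge T=\int\varphi\,\eta_\delta(u_j-v_j)\,(dd^c u_j)^p\wedge T.$$

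The main obstacle is passing $j\to+\infty$ in this identity, because the test function $\eta_\delta(u_j-v_j)$ varies with $j$ against only weakly convergent measures. To resolve this, use quasicontinuity (Theorem 1): given $\varepsilon>0$, pick an open set $\mathscr{O}\subset\Omega$ with $cap_{m,T}(\mathscr{O},\Omega)<\varepsilon$ off which $u$ and $v$ are continuous. Dini's theorem, applied separately to the decreasing sequences $u_j\searrow u$ and $v_j\searrow v$ on compact subsets of $\Omega\setminus\mathscr{O}$, gives uniform convergence there; uniform continuity of $\eta_\delta$ on bounded intervals then yields uniform convergence of $\eta_\delta(u_j-v_j)\to\eta_\delta(u-v)$ on such compacta. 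Standard Chern--Levine--Nirenberg estimates (which for a uniformly bounded $m$-sh sequence give $\int_E (dd^c u_j)^p\wedge T\le C\,cap_{m,T}(E)$) produce uniform absolute continuity of $(dd^c u_j)^p\wedge T$ and $(dd^c w_j)^p\wedge T$ with respect to $cap_{m,T}$, so the $\mathscr{O}$-contribution can be made arbitrarily small uniformly in $j$; weak lower semicontinuity on open sets transports this control to the limit measures. Letting first $j\to+\infty$ and then $\varepsilon\to 0$,
$$\int\varphi\,\eta_\delta(u-v)\,(dd^c w)^p\wedge T=\int\varphi\,\eta_\delta(u-v)\,(dd^c u)^p\wedge T.$$

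Finally, as $\delta\searrow 0$ one has $\eta_\delta(u-v)\nearrow\mathbf{1}_{\{u>v\}}$ pointwise and boundedly; monotone convergence applied to each of the two Radon measures $\varphi\,(dd^c w)^p\wedge T$ and $\varphi\,(dd^c u)^p\wedge T$ (taking $\varphi\ge 0$ first, then general $\varphi$ by linearity) yields
$$\int\varphi\,\mathbf{1}_{\{u>v\}}(dd^c w)^p\wedge T=\int\varphi\,\mathbf{1}_{\{u>v\}}(dd^c u)^p\wedge T,$$
whence the equality as Borel measures on $\Omega$ by arbitrariness of $\varphi\in\mathscr{C}^\infty_c(\Omega)$.
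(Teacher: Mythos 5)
Your proof is correct and follows essentially the same route as the paper: regularize, use Lu's continuity theorem for decreasing bounded sequences, exploit quasicontinuity with respect to $cap_{m,T}$ together with a Chern--Levine--Nirenberg bound to pass the varying weight through the weak limits, and finish by a monotone approximation of ${\1}_{\{u>v\}}$. The only differences are cosmetic: you regularize both $u$ and $v$ and use the plateau functions $\eta_\delta(u_j-v_j)$, whereas the paper keeps $v$ fixed, works with the weight $f_j=\max(u_j-v,0)$, and replaces your $\eta_\delta$ step by $f/(f+\delta)\uparrow{\1}_{\{u>v\}}$.
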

\begin{proof} The equality is obvious if $u$ is continuous. For the general case, consider a regularization sequence $(u_j)_j$ that decreases to $u$. Then, ${\1}_{\{u_j>v\}} (dd^c(\max(u_j,v)))^p\wedge T = {\1}_{\{u_j>v\}} (dd^c u_j)^p\wedge T$. It is clear that $f_j=\max(u_j-v,0)\downarrow f=\max(u-v,0)$, $f_j,f$ are locally bounded and quasicontinuous in $\Omega$ (see Thm.1). Therefore, using subaddivity of $cap_{m,T}$, for $\varepsilon>0$, there exists an open set ${\mathscr O}$ of $\Omega$ such that $f_j,f$ are continuous on $\Omega\smallsetminus{\mathscr O}$ and $cap_{m,T}({\mathscr O})<\varepsilon$. By the above equality, we see that $f_j (dd^c(\max(u_j,v)))^p\wedge T = f_j(dd^c u_j)^p\wedge T$. Now we claim that $f_j(dd^c u_j)^p\wedge T$ converges weakly to $f(dd^c u)^p\wedge T$. Indeed, let $\varphi$ be a test function and let $K={\rm support}(\varphi)$ such that $\|\varphi\|_\infty(K)\leq A$. Then,
\begin{equation}
\begin{array}{lcl}
\left|\ds\int_\Omega \varphi[f_j(dd^c u_j)^p\wedge T- f(dd^c u)^p\wedge T]\right|&\leq& A\ds\int_K|f_j-f|(dd^c u_j)^p\wedge T\\& &+\left|\ds\int_\Omega f\varphi[(dd^c u_j)^p\wedge T-(dd^c u)^p\wedge T]\right|.
\end{array}
\end{equation}
Observe that
$$\int_K|f_j-f|(dd^c u_j)^p\wedge T\leq\|f_j-f\|_\infty(K\smallsetminus{\mathscr O})\int_K(dd^c u_j)^p\wedge T+\|f_j-f\|_\infty(K)\int_{K\cap{\mathscr O}}(dd^c u_j)^p\wedge T.$$
The first term in the right hand side inequality converges to $0$ because $f_j$ converge uniformly to $f$ on $K\smallsetminus{\mathscr O}$, while the second term is bounded from above by $A_1\|f_j-f\|_\infty(K)\varepsilon$, where $A_1$ is a constant not depending of $j$. In order to estimate the second integral in $(3.1)$, we can write $f=g+h$, where $g,h$ are two bounded functions such that $g$ is continuous in $\Omega$ and $h=0$ on $\Omega\smallsetminus{\mathscr O}$. Hence, if we set $\mu_j=T\wedge (dd^c u_j)^p-T\wedge (dd^c u)^p$, we obtain
$$\left|\ds\int_\Omega f\varphi\mu_j\right|=\left|\ds\int_\Omega f\varphi[(dd^c u_j)^p\wedge T-(dd^c u)^p\wedge T]\right|\leq\left|\ds\int_\Omega g\varphi\mu_j\right|+\left|\ds\int_\Omega h\varphi\mu_j\right|.$$
Since $\mu_j$ converges weakly to zero and $g$ is continuous, the first integral converges to zero. Also, it is not hard to see that the second integral is bounded from above by $A_2\varepsilon$, $A_2$ is independent of $j$. This completes the proof of the claim, and therefore we obtain the following equality $f (dd^c(\max(u,v)))^p\wedge T=f (dd^cu)^p\wedge T$. By the same arguments, for every $\delta>0$ we have
$$\frac{f}{f+\delta} (dd^c(\max(u,v)))^p\wedge T=\frac{f}{f+\delta} (dd^cu)^p\wedge T.$$
Since $\ds\frac{f}{f+\delta}\uparrow {\1}_{\{u>v\}}$, by letting $\delta$ to $0$, we obtain the lemma.
\end{proof}
\begin{proofof}{\it Theorem3.} By hypothesis $u$ is bounded near $\partial\Omega\cap{\rm Supp}T$, then by Thm.2, $(dd^c u)^{p}\wedge T$ is a positive Borel measure. The proof was divided in two steps.\\
{\bf Step.1} Assume that $u_j$ are uniformly bounded in $\Omega$ and $u$ is locally bounded in $\Omega$. Then, we repeat the proof of \cite{[7]} by using Thm.1 instead of the theorem of quasi-continuity for bounded psh functions proved by \cite{[7]}.\\
{\bf Step.2} general case. We argue as in \cite{[13]}, then for $c\gg 1$, we set
$$
\begin{array}{lcl}
(dd^c u_{j})^{p}\wedge T-(dd^c u)^{p}\wedge T&=&[(dd^c u_{j})^{p}\wedge T-(dd^c\max(u,-c))^{p}\wedge T]\\& &+[(dd^c\max(u_j,-c))^{p}\wedge T-(dd^c\max(u,-c))^{p}\wedge T]\\& &+[(dd^c\max(u,-c))^{p}\wedge T-(dd^cu)^{p}\wedge T]\\&=&I_1+I_2+I_3.
\end{array}
$$
Observe that for every $c$, $\max(u_j,-c)$ converge in $cap_{m,T}$-capacity to $\max(u,-c)$. Therefore, by Step.1, it is clear that $I_2$ converges weakly to $0$, when $j\l +\infty$. To investigate the other terms let $\varepsilon>0$ and let $\varphi\in{\mathscr D}(\Omega)$, be a test function. Since $\max(u,-c)$ decreases to $u$ as $c\l +\infty$, we deduce from Thm.2 that for $c$ sufficiently large $\left|\langle I_3,\varphi\rangle\right|<\varepsilon$. In order to estimate $\langle I_1,\varphi\rangle$, we use lemma 3 to get ${\1}_{\{u_j>-c\}} (dd^c(\max(u_j,-c)))^p\wedge T = {\1}_{\{u_j>-c\}}(dd^c u_j)^p\wedge T$. Hence,
$$
\begin{array}{lcl}
\left|\langle I_1,\varphi\rangle\right|&=&
\left|\ds\int_{\{u_j\leq -c\}}\varphi[(dd^cu_j)^{p}\wedge T-(dd^c\max(u_j,-c))^{p}\wedge T]\right|\\&\leq& A\left(\ds\int_{\{u_j\leq -c\}}(dd^cu_j)^{p}\wedge T+\ds\int_{\{u_j\leq -c\}}(dd^c\max(u_j,-c))^{p}\wedge T\right).
\end{array}
$$
In view of lemma 2, we see that
$$
\begin{array}{lcl}
\ds\int_{\{u_j\leq -c\}}(dd^c\max(u_j,-c))^{p}\wedge T&\leq&\ds\int_{\{u_j\leq -c\}}\left(-1-\frac{2u_j}{c}\right)^p(dd^c\max(u_j,-c))^{p}\wedge T\\&\leq&
2^p\ds\int_{\{u_j<-c/2 \}}\left(-\frac{c}{2}-u_j\right)^p(dd^c\max(u_j/c ,-1))^{p}\wedge T
\\&\leq& 2^p(p!)\ds\int_{\{u_j<-c/2 \}}(dd^cu_j)^{p}\wedge T.
\end{array}
$$
It follows that $$|\langle I_1,\varphi\rangle|\leq A(1+2^p(p!))\ds\int_{\{u_j<-c/2 \}}(dd^cu_j)^{p}\wedge T.$$
Using the fact that $u_j\l u$ in $cap_{m,T}$ on each $E\Subset\Omega$ combined with hypothesis $(3)$, we easily see that $cap_{m,T}(\{u_j<-c/2 \})$ is uniformly convergent to zero for all $j$ as $c\l+\infty$. Hence, we conclude by the second hypothesis that there exists $c_0>0$ such that for any $c>c_0$, $|\langle I_1,\varphi\rangle|<\varepsilon$, for all $j$. Finally, we obtain the fact that $(dd^c u_{j})^{p}\wedge T-(dd^c u)^{p}\wedge T$ is  weakly convergent to zero, as $j\l +\infty$. In order to prove that $(dd^c u)^{p}\wedge T\ll cap_{m,T}$ on each subset $E\Subset\Omega$, we can proceed as in \cite{[13]}.
\end{proofof}
\section{Cegrell's classes associated to an $m$-positive closed current}
In this section, we associate to each $m$-positive closed current $T$, in the same way as \cite{[9]} and \cite{[10]}, three classes of Cegrell-type: ${\mathcal E}^{m,T}_0, {\mathcal F}^{m,T},{\mathcal E}^{m,T}$. In order to get important properties of such classes, it should be take more care than the case of the trivial current $T=1$ introduced by Cegrell \cite{[5]}, as well as the case $T=\beta^{n-m}$ introduced by Lu \cite{[10]}, since $T$ may have a large singular part. We investigate then the most important relative properties and we point out that some of them given in \cite{[9]} are not true. Next, we prove a Xing-type comparison principle inequality for the class ${\mathcal F}^{m,T}$ (see definition 5 below). The definitions and results involved in this section are quite obvious modification of those of the complex case $m=n$, or the strongly positive current $T=\beta^{n-m}$. For the proofs we shall following the line of \cite{[9]} and \cite{[10]}.
\subsection{Definitions and properties}
Let $\Omega$ be a bounded domain of $\mathbb{C}^n$. Let $T$ be an $m$-positive closed current of bidimension $(p,p)$ with $ p \geq 1$. Denote by ${\mathscr P}_m^-(\Omega)$ the cone of all negative $m$-sh functions on $\Omega$. We define
\begin{defn} \
\begin{enumerate}
\item $\mathcal{E}^{m,T}_0(\Omega)=\left\{u\in {\mathscr P}_m^-(\Omega)\cap L^\infty(\Omega),~{\ds\lim_{z\rightarrow\xi}} u(z) = 0~~ \forall \xi\in \partial\Omega\cap{\rm Supp}T, \int_{\Omega } (dd^cu)^p \wedge T < +\infty \right\}$
\item $\mathcal{F}^{m,T}(\Omega)= \left\{u \in{\mathscr P}_m^-(\Omega),~ \exists (u_{j})_j\subset {\mathcal E}^{m,T}_0(\Omega),\  u_{j}\downarrow u \ {\rm on\  \Omega,} \ {\sup_{j}}\int_{\Omega} (dd^c u_{j})^p \wedge T < +\infty\right\}$
\item Let $u\in {\mathscr P}_m^-(\Omega)$, we say that $u\in {\mathcal E}^{m,T}(\Omega)$ if for every $z_{0} \in \Omega $, there exist a neighborhood $\omega$ of $z_{0}$ in $\Omega$ and a decreasing sequence $(u_{j})_{j} \subset {\mathcal E}^{m,T}_0 (\Omega)$ such that $u_{j}\downarrow u$ on $\omega$
and $\sup_{j} \int_{\Omega} (dd^c u_{j})^q \wedge T < +\infty$.
\end{enumerate}
\end{defn}
In particular, if $m=n$, we obtain the well-known Cegrell's classes \cite{[5]}, for $T=1$ and the associated classes of \cite{[9]} for $T$ be a closed positive current. For any $m$ and $T =\beta^{n-m}$, we recover the classes introduced by Lu \cite{[10]}. Now, we start by listing some properties which can be proved by repeating the arguments in \cite{[5]} and \cite{[9]}.
\begin{propr} \
\begin{enumerate}
\item ${\mathcal E}^{m,T}_0(\Omega) \subset  {\mathcal F}^{m,T}(\Omega) \subset {\mathcal E}^{m,T}(\Omega)$.
\item If $\varphi \in {\mathcal E}^{m,T}_0(\Omega)$ and $\psi \in {\mathscr P}_m^-(\Omega)$, then $ \max(\varphi,\psi) \in {\mathcal E}^{m,T}_0(\Omega).$
\item The above classes in definition 5 are convex cones.
\item If $\varphi,\psi \in  {\mathcal E}^{m,T}_0(\Omega)$ then $\int_{\Omega}(-\psi)^{p+1}(dd^{c}\varphi)^p \wedge T \leq (p+1)! \sup(-\varphi)^{p} \int_{\Omega} (-\psi) (dd^c\psi)^p \wedge T.$
\item If $ u \in {\mathcal F}^{m,T}(\Omega)$ then $\int_{\Omega} (dd^c u)^p \wedge T < +\infty.$
\item Assume that $u, v\in{\mathscr P}_m^-(\Omega)$, and $\forall w\in\partial\Omega\cap{\rm Supp}T$, $\ds\lim_{\xi\rightarrow w}u(\xi)=\ds\lim_{\xi\rightarrow w}v(\xi)=0$. If $p=1$, then $\int_\Omega u dd^cv\wedge T=\int_\Omega v dd^cu\wedge T$.
\item If $u, v\in{\mathcal F}^{m,T}(\Omega)$, then
$\int_\Omega u dd^cv\wedge T=\int_\Omega v dd^cu\wedge T$.
\end{enumerate}
\end{propr}
The fourth property known as the Blocki's inequality, while the two later are "integration by parts" for the current $T$. Unfortunatly, some other properties of these classes given in \cite{[9]} for the case $m=n$, are not true. Namely, let us recall the following
\begin{rem}In their paper [9, Thm.5.3], the authors observed that "integration by parts" for positive closed current $T$ of dimension greater than one also holds true. We give here an example against which shows that the statement of Thm.5.3 (later proof) in \cite{[9]} is false. Let $ \Omega =B(0,1) $ in the unit ball $\mathbb{C}^3$ and for $z=(z_1,z_2,z_3)\in\Omega$, setting
$T=[z_{1}=0],\ u=\log|z|$ and $v=|z|^{2}-1,$ here $T$ is the current of integration on $\left\{z_1=0\right\}$. It is clear that the functions $u$ and $v$ satisfy the assumptions of Thm.5.3 of \cite{[9]}. Taking into account the fact that $\left(dd^c\log(|z_{2}|^2+|z_{3}|^2)^{1/2}\right)^2$ is nothing but the Dirac measure on the origin in $\bC^2$, it is not difficult to get $\int_{\Omega} u (dd^c v)^{2} \wedge T=\frac{-\pi^{2}}{32}\not=-1=\int_{\Omega} v(dd^cu)^{2}\wedge T$.
\end{rem}
\subsection{Approximation of $m$-sh functions and complex hessian operator for $\mathcal{E}^{m,T}$}
An essential tool in the study of complex pluripotential theory is the approximation of psh functions. Locally, this is a classic phenomenon by ordinary regularization. In \cite{[5]} Cegrell proved that such approximation is globally true for psh functions defined on an open hyperconvex set. Recently, Lu \cite{[10]} extends the result of Cegrell for $m$-sh functions defined on an $m$-hyperconvex open set. Recall that an open subset $\Omega$ of $\mathbb{C}^n$ is said $m$-hyperconvex if it is bounded, connected and there exists $\varphi \in{\mathscr P}_m^-(\Omega)$, continuous and exhaustive i.e, for every $c>0, ~
 \Omega_{c} =\{ z \in \Omega , \varphi(z) < -c\}  \Subset \Omega$. Our aim now is to establish the analogous approximation for the associated class ${\mathcal E}^{m,T}_0(\Omega)$.
\begin{theo}
Let $\Omega$ be an $m$-hyperconvex domain, $u\in{\mathscr P}_m^-(\Omega)$ such that $\ds\lim_{z\rightarrow w} u(z) = 0$, for each $w\in\partial \Omega\cap{\rm Supp}(T)$, where $T$ is an $m$-positive closed current of bidimension $(p,p)$, $p\geq 1$ such that $\int_{\Omega} (dd^c u)^p \wedge T < +\infty$. Then, there exists a decreasing sequence $(v_{j})_{j} \subset {\mathcal E}^{m,T}_0(\Omega)\cap{\cal C}(\ov\Omega)$ such that ${v_j}_{|\partial\Omega}= 0, \forall j \in \mathbb{N}$ and $\ds\lim_{j\rightarrow +\infty} v_{j}(z)= u(z), \forall z \in \Omega.$
\end{theo}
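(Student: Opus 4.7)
The plan is to adapt the classical Cegrell approximation theorem \cite{[5]} (generalized by Lu \cite{[10]} to the $m$-sh setting in the case $T=\beta^{n-m}$) to the case of an arbitrary $m$-positive closed current $T$. Using the $m$-hyperconvexity hypothesis, I fix a continuous negative $m$-sh exhaustion $\varphi$ of $\Omega$; after a routine truncation and rescaling, $\varphi$ may be assumed bounded on $\bar\Omega$ and continuous up to the boundary with $\varphi|_{\partial\Omega}=0$.

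The construction of $v_j$ has two substeps. First (truncation): set $\tilde u_j:=\max(u,j\varphi)\in\mathscr P_m^-(\Omega)\cap L^\infty(\Omega)$. The sequence $(\tilde u_j)$ is decreasing with $\tilde u_j\downarrow u$, and the sandwich $j\varphi\leq \tilde u_j\leq 0$ forces $\tilde u_j\to 0$ at every point of $\partial\Omega$, not only on $\partial\Omega\cap\mathrm{Supp}\,T$. Second (regularization and gluing): the convolutions $\tilde u_j^{(\epsilon)}:=\tilde u_j\ast\chi_\epsilon$ are smooth $m$-sh on $\Omega_\epsilon$ and decrease to $\tilde u_j$ as $\epsilon\downarrow 0$ by Proposition 1 (2). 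Following Cegrell, I glue $\tilde u_j^{(\epsilon_j)}$ with a suitably scaled exhaustion by setting
\[
v_j:=\max\bigl(\tilde u_j^{(\epsilon_j)},\,(1-\delta_j)\,j\varphi\bigr)\ \text{on }\Omega_{\epsilon_j},\qquad v_j:=(1-\delta_j)\,j\varphi\ \text{on }\Omega\setminus\Omega_{\epsilon_j},
\]
where parameters $\epsilon_j,\delta_j\downarrow 0$ are chosen so that the two pieces coincide on a collar of $\partial\Omega_{\epsilon_j}$ inside $\Omega_{\epsilon_j}$. The resulting $v_j$ is $m$-sh, continuous on $\bar\Omega$, and satisfies $v_j|_{\partial\Omega}=0$. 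Monotonicity of convolution in $\epsilon$ combined with $\tilde u_{j+1}\leq \tilde u_j$ gives $v_{j+1}\leq v_j$, and letting $\epsilon_j\downarrow 0$ ensures $v_j\downarrow u$ pointwise on $\Omega$.

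The main obstacle is verifying that $\int_\Omega(dd^c v_j)^p\wedge T<+\infty$, so that each $v_j\in\mathcal E_0^{m,T}(\Omega)$. Theorem 2 ensures $(dd^c v_j)^p\wedge T$ is a well-defined positive Borel measure on $\Omega$ (since $v_j$ is bounded $m$-sh), but its total mass is \emph{not} automatic because $T$ may be very singular near $\partial\Omega$. My plan is to apply the Xing-type comparison principle (Lemma 2) to $v_j$ against $\tilde u_j$: combined with the Blocki inequality and the chain $u\leq\tilde u_j\leq v_j\leq 0$ with $v_j|_{\partial\Omega}=0$, Lemma 2 transfers the hypothesis $\int_\Omega(dd^c u)^p\wedge T<+\infty$ into the required finite mass bound, modulo a boundary contribution involving $\int(dd^c\varphi)^p\wedge T$. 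The latter is finite by a Chern--Levine--Nirenberg-type estimate for $\varphi\in\mathscr P_m^-(\Omega)\cap\mathcal C(\bar\Omega)$ vanishing on $\partial\Omega$. Controlling this boundary mass is the main technical step, and it is here that the asymmetry between an arbitrary $m$-positive $T$ and the strongly positive model $\beta^{n-m}$ in \cite{[10]} forces extra care.
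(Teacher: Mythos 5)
Your overall skeleton (truncate $u$ to a bounded $m$-sh function, approximate the truncation by continuous functions vanishing on $\partial\Omega$, then use the Xing-type comparison principle to get the finite $T$-mass) is the same as the paper's, but two of your steps have genuine gaps. First, the gluing step is not routine and, as written, cannot be carried out: you need the two pieces to agree near $\partial\Omega_{\epsilon_j}$, i.e. $\tilde u_j^{(\epsilon_j)}\leq (1-\delta_j)\,j\varphi$ on a collar, while by construction $\tilde u_j^{(\epsilon_j)}\geq \tilde u_j\geq j\varphi$ and, wherever $u>j\varphi$ near the collar, $\tilde u_j^{(\epsilon_j)}\geq u$ can be far above $(1-\delta_j)j\varphi$. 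Already for $u=\varphi$ (take $\Omega$ the ball and $\varphi=|z|^2-1$) one has $\tilde u_j=u$ and $(1-\delta_j)j\varphi\leq u\leq u^{(\epsilon_j)}$ for $j\geq 2$, so no choice of $\epsilon_j,\delta_j$ makes the pieces coincide, and the piecewise function is not $m$-sh across the gluing hypersurface. The classical fixes (gluing $\tilde u_j^{(\epsilon_j)}-\delta_j$ with $A_j\varphi$, with $\epsilon_j,\delta_j,A_j$ coordinated, and then forcing monotonicity of the resulting sequence) are exactly the delicate content of the bounded-case approximation theorem; the paper does not redo it but invokes Lu's Thm.~3.1 of \cite{[10]} for each $u_k=\max(u,-k)$, observes that the resulting double sequence $(u_k^j)$ is decreasing in both indices, and takes the diagonal $v_j=u_j^j$.

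Second, your mass estimate leans on the finiteness of $\int_\Omega(dd^c\varphi)^p\wedge T$ ``by a Chern--Levine--Nirenberg-type estimate,'' and this is false in general: CLN gives only interior bounds, and an arbitrary $m$-positive closed $T$ may have infinite mass near $\partial\Omega$ (with $\varphi=|z|^2-1$ the integral is a multiple of the total mass of $T\wedge\beta^p$ in $\Omega$). No control of the exhaustion's $T$-mass is available, nor needed: the paper's point is that Lemma~2 with $w_1$ constant, applied to the pair $\lambda u_j$ and $v_j$ (legitimate because $u_j=\max(u,-j)\leq v_j$ are bounded and $\lambda u_j\to 0$ on $\partial\Omega\cap{\rm Supp}\,T$ by the hypothesis on $u$), yields directly $\int_\Omega(dd^cv_j)^p\wedge T=\int_{\{\lambda u_j<v_j\}}(dd^cv_j)^p\wedge T\leq \lambda^p\int_\Omega(dd^cu_j)^p\wedge T=\lambda^p\int_\Omega(dd^cu)^p\wedge T<+\infty$, with no boundary contribution from $\varphi$ at all. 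If you repair the construction of the continuous approximants (or quote Lu's theorem as the paper does) and replace your boundary-term bookkeeping by this comparison against $\lambda u_j$, your argument aligns with the paper's proof.
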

In the particular case $m=n$, Thm.4 was stated and proved by \cite{[9]}. We mention here that the proof of \cite{[9]} is incorrect. Indeed, the authors applied the comparison theorem of \cite{[7]} without the key assumption that $u$ is locally bounded. Subsequently, we give a complete proof of this result in the general case.
\begin{proof} Let $u_{k}= \max(u,-k)$. Applying Thm.3.1 in \cite{[10]} on each  $u_{k}$, there exists $(u^{j}_{k})_{j}$ a decreasing sequence ${\mathscr P}_m^-(\Omega)\cap {\cal C}(\overline{\Omega})$ with ${u^{j}_{k}}_{|\partial\Omega}\equiv 0 ~\forall j,k \in\mathbb{N}, \ds\lim_{j\rightarrow +\infty} u^{j}_{k}=u_{k}$. In view of the construction of $u_k^j$ (see the proof of Thm.3.1 in \cite{[10]}), observe that the sequence $(u^j_{k})_{k}$ is decreases also. It follows that
$$u\leq u_{j+1} \leq u^{j+1}_{j+1} \leq u^{j+1}_{j}\leq u^{j}_{j} \leq u^{j}_{s},\ \ \forall s \leq j.$$
Then by letting $j\rightarrow +\infty$ and $s\rightarrow +\infty$ in this order, we remark that the diagonal sequence $(v_{j}=u^j_{j})_{j}$ is decreasing pointwise to $u$. Finally, since $u_{k} \in{\mathscr P}_m^-(\Omega)\cap L^\infty(\Omega)$, then by using lemma 2 for $w_1=r-1$ and the Stokes formula, for all
 $\lambda>1$, we have
\vskip0.1cm
$\displaystyle\int_{\Omega} (dd^cv_{j})^p \wedge T =\displaystyle\int_{\{\lambda u_{j}<v_{j} \}}(dd^cv_{j})^p \wedge T \leq \lambda^{p} \displaystyle\int_{\Omega} (dd^c u_{j})^p \wedge T=\lambda^{p} \displaystyle\int_{\Omega} (dd^c u)^p \wedge T < +\infty.$
\end{proof}
According to the notations of \cite{[9]}, observe once again that the statement of Prop.5.16 in \cite{[9]} requires the key assumption: $\int_\Omega (dd^ch)^q\wedge T<+\infty$, because it was needed in the proof. Using the same argument, Thm.4 allows us to improve Prop.5.16 as follows:
\begin{pro} Let $u_1,\cdots,u_p\in{\mathcal F}^{m,T}(\Omega)$ and let $h\in{\mathscr P}_m^-(\Omega)$ such that $\ds\lim_{z\l\partial\Omega}h(z)=0$ and $\int_\Omega (dd^ch)^q\wedge T<+\infty$. If $g_j^q\in{\mathcal E}_0^{m,T}(\Omega)$ decreases to $u_q$ as $j\l +\infty$, $q=1,\cdots, p$ then
$$\ds\lim_{j\l +\infty}\int_\Omega h dd^cg^{1}_{j}\wedge\cdots \wedge dd^cg^{p}_{j}\wedge T=\int_\Omega h dd^cu^{1}\wedge \cdots \wedge dd^cu^{p}\wedge T .$$
\end{pro}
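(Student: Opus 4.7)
The argument follows the pattern of \cite{[9]} Prop.~5.16, with Theorem~4 of the present paper replacing the analogous approximation result invoked there. The idea is to substitute $h$ by a continuous approximant $h_s$ vanishing on $\partial\Omega$, apply the weak convergence of the hessian operator on $\mathcal{F}^{m,T}$ (which is immediate against continuous test functions), and then let $s\to\infty$ via a uniform-in-$j$ control.

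First I would invoke Theorem~4: the assumptions on $h$ match its hypotheses exactly, giving a decreasing sequence $(h_s)_s\subset{\mathcal E}^{m,T}_0(\Omega)\cap\mathcal{C}(\bar\Omega)$ with $h_s|_{\partial\Omega}=0$ and $h_s\downarrow h$. Set $S_j=dd^c g_j^1\wedge\cdots\wedge dd^c g_j^p\wedge T$ and $S=dd^c u_1\wedge\cdots\wedge dd^c u_p\wedge T$. Iterated application of Theorem~2 to the bounded functions $g_j^q\in{\mathcal E}^{m,T}_0(\Omega)$ provides the weak convergence $S_j\to S$ together with the uniform mass bound $\sup_j\int_\Omega dS_j<+\infty$. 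Since $h_s$ extends continuously by zero to $\bar\Omega$, it is a uniform limit of test functions in $\mathcal{D}(\Omega)$, so combining the uniform mass bound with the weak convergence yields for each fixed $s$
$$\lim_{j\to\infty}\int_\Omega h_s\,dS_j=\int_\Omega h_s\,dS.$$

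Next I would decompose
$$\int_\Omega h\,dS_j-\int_\Omega h\,dS = \int_\Omega(h-h_s)\,dS_j + \Bigl[\int_\Omega h_s\,dS_j-\int_\Omega h_s\,dS\Bigr] + \int_\Omega(h_s-h)\,dS.$$
For fixed $s$ the middle bracket vanishes as $j\to\infty$ by the previous step, while the last term vanishes as $s\to\infty$ by monotone convergence on the positive finite measure $S$ and the non-negative decreasing sequence $h_s-h\downarrow 0$.

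The main obstacle is the first term, where one needs the uniform estimate $\sup_j\int_\Omega(h_s-h)\,dS_j\to 0$ as $s\to\infty$. My plan is to use integration by parts (Property~7) to transfer the $dd^c$'s from each $g_j^q$ onto $h_s-h$, then apply Blocki's inequality (Property~4) separately to $h_s$ and to suitable bounded truncations of $h$, and finally exploit the uniform energy bound $\sup_j\int_\Omega(dd^c g_j^q)^p\wedge T<+\infty$ inherent in the $\mathcal{F}^{m,T}$ definition. The delicate point is that $h_s-h$ is not itself $m$-sh, so Cegrell-type inequalities have to be applied to $h_s$ and to $h$ individually and then glued through a H\"older-type estimate. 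Once this uniform control is secured, a standard $\varepsilon/3$ diagonal argument in the decomposition above closes the proof.
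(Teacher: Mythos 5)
Your high-level strategy (regularize $h$ by Theorem 4, pass to the limit in $j$ against each $h_s$, then let $s\to\infty$) is in the spirit of what the paper intends: the authors give no written proof here, saying only that the result follows from the argument of Prop.5.16 in \cite{[9]} combined with Theorem 4. But your proposal has a genuine gap at exactly the point that carries all of the difficulty, namely the uniform-in-$j$ estimate $\sup_j\int_\Omega(h_s-h)\,dd^cg_j^1\wedge\cdots\wedge dd^cg_j^p\wedge T\to 0$ as $s\to\infty$. This is precisely where the extra hypothesis $\int_\Omega(dd^ch)^p\wedge T<+\infty$ -- the very assumption the authors stress was missing in \cite{[9]} -- must enter, and you do not prove it: you only announce a plan (integration by parts, Blocki's inequality applied to $h_s$ and to truncations of $h$ separately, then a ``H\"older-type'' gluing). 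Separate bounds on $\int_\Omega(-h_s)\,dS_j$ and $\int_\Omega(-h)\,dS_j$ cannot control their difference uniformly in $j$ as $s\to\infty$, and, as you yourself note, $h_s-h$ is not $m$-sh, so no energy estimate applies to it directly; as written the central step is missing rather than merely sketched. The route actually suggested by \cite{[9]} and by Cegrell's scheme is different: using the integration-by-parts properties (Properties 2(6)--(7)) and the negativity of all functions involved, one shows that $j\mapsto\int_\Omega h\,dd^cg_j^1\wedge\cdots\wedge dd^cg_j^p\wedge T$ is monotone and bounded by mixed-energy estimates in which $\int_\Omega(dd^ch)^p\wedge T$ appears, and then identifies the limit with $\int_\Omega h\,dd^cu^1\wedge\cdots\wedge dd^cu^p\wedge T$ via the continuous approximants $h_s$ of Theorem 4; some version of this monotonicity argument is what your $\varepsilon/3$ decomposition still needs in order to close.

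Two secondary inaccuracies. The weak convergence $S_j\to S$ is not an ``iterated application of Theorem 2'': the limits $u^q\in\mathcal{F}^{m,T}(\Omega)$ need not be bounded near $\partial\Omega\cap{\rm Supp}\,T$, so the relevant statement is Theorem 5. Likewise, the uniform mass bound $\sup_j\int_\Omega dS_j<+\infty$ is not ``inherent in the definition of $\mathcal{F}^{m,T}$'' for the given sequences: the $g_j^q$ are arbitrary decreasing sequences in $\mathcal{E}_0^{m,T}(\Omega)$, not the ones witnessing membership in $\mathcal{F}^{m,T}(\Omega)$, so the bound has to be derived (for instance via Lemma 2 and the comparison trick used in the proof of Theorem 4, together with Cegrell-type inequalities for mixed masses) before it can be fed into the weak-convergence step.
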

By repeating the same argument of \cite{[10]}, we shall extend the definition and the continuity of the complex hessian operator to the class $\mathcal{E}^{m,T}(\Omega)$. More precisely, we have
\begin{theo} Let $\Omega$ be an $m$-hyperconvex open subset of $\bC^n$, $T$ an $m$-positive closed current of bidimension $(p,p)$ on $\Omega$ and $u^{q} \in{\mathcal E}^{m,T}(\Omega), 1\leq q \leq p$. If $(g^{q}_{j})_{j} \subset{\mathcal E}^{m,T}_0(\Omega)$ is a decreasing sequence to $u^{q}$ when $j\l+\infty $ then $ dd^cg^{1}_{j}\wedge dd^cg^{2}_{j}\wedge\cdots \wedge dd^cg^{p}_{j}\wedge T$ converges weakly on $\Omega$ and the limit does not depend on the choice of the sequence $(g^{q}_{j})_{j}$.
\end{theo}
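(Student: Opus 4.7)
The plan is to adapt the approach of Lu~\cite{[10]} (treating the model current $T=\beta^{n-m}$) to a general $m$-positive closed current $T$. There are two things to prove: (a) the sequence of measures $\mu_j := dd^c g^1_j \wedge \cdots \wedge dd^c g^p_j \wedge T$ has locally uniformly bounded mass in $\Omega$, and (b) two sequences from $\mathcal{E}^{m,T}_0(\Omega)$ decreasing to the same $u^q$ yield the same weak limit. Together these give both the weak convergence of $(\mu_j)$ and the independence from the choice of approximants.

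For (a), fix $z_0\in\Omega$. By the local nature of definition~5, for each $q$ there exist a neighborhood $\omega_q$ of $z_0$ and a sequence $(h^q_j)\subset\mathcal{E}^{m,T}_0(\Omega)$ with $h^q_j\downarrow u^q$ on $\omega_q$ and $\sup_j\int_\Omega (dd^ch^q_j)^p\wedge T<+\infty$. On the intersection $\omega=\cap_q\omega_q$, I would apply the Xing-type comparison principle of lemma~2 together with Blocki's inequality (property (4) of propr.~2), iterated $p$ times, to obtain a Chern--Levine--Nirenberg type estimate
\[
\int_K dd^c g^1_j\wedge\cdots\wedge dd^c g^p_j\wedge T \leq C_K \prod_{q=1}^{p}\Bigl(\sup_j\int_\Omega (dd^c h^q_j)^p\wedge T\Bigr)^{1/p}
\]
on every compact $K\Subset\omega$. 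Weak compactness of positive Radon measures then provides at least a subsequential weak limit of $(\mu_j)$.

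For (b), let $(g^q_j)$ and $(\tilde g^q_j)$ be two sequences from $\mathcal{E}^{m,T}_0(\Omega)$ both decreasing to $u^q$ on a common neighborhood $\omega$ of $z_0$, and let $\chi$ be a test function supported in $\omega$. The $m$-hyperconvexity of $\Omega$ provides (via Theorem~4) an element $h\in\mathcal{E}^{m,T}_0(\Omega)\cap\mathcal{C}(\overline\Omega)$ with $-Ah\geq|\chi|$ on the support of $\chi$ for some $A>0$. An iterated integration-by-parts argument in the spirit of Proposition~3 transfers the $dd^c$ factors successively onto $h$, rewriting $\int \chi\,\mu_j$ as a finite sum of terms of the form $\int g^{q}_{j}\,dd^c h\wedge dd^c g^{q_2}_j\wedge\cdots\wedge T$, and likewise for $\tilde\mu_j$. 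The monotone convergence $g^q_j\downarrow u^q$, $\tilde g^q_j\downarrow u^q$, combined with the already-known continuity of the hessian operator for locally bounded $m$-sh functions (Theorem~2), then forces $\int\chi\,\mu_j$ and $\int\chi\,\tilde\mu_j$ to converge to the same limit.

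The main obstacle is the rigorous justification of the iterated integration by parts in (b), because ${\rm Supp}(T)$ may meet $\Omega$ in a large set so that no direct appeal to Stokes is available. I would handle this by first replacing each $g^q_j$ (and $\tilde g^q_j$) with its truncation $\max(g^q_j,-k)$, on which Theorem~2 and Proposition~3 apply without difficulty, and then letting $k\to+\infty$. The error terms coming from $\{u^q<-k\}$ would be absorbed using the local mass bound of step~(a) together with lemma~2, exactly as in the second step of the proof of Theorem~3 above.
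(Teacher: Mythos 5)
Your overall strategy --- local mass bounds plus an integration-by-parts/monotonicity argument to identify the limit --- is precisely the Cegrell--Lu scheme that the paper itself invokes (the paper gives no detailed proof of Theorem 5, saying only that the argument of \cite{[10]} is repeated), so in spirit you are on the intended track. However, two steps as written do not go through. First, in (b) you cannot ``transfer the $dd^c$ factors onto $h$'' inside $\int\chi\,\mu_j$: integration by parts exchanges $dd^c$ between the two functions appearing in the integrand, and $\chi$ is a test function, not an $m$-sh one; choosing $h$ with $-Ah\geq|\chi|$ only yields the bound $\left|\int\chi\,\mu_j\right|\leq A\int(-h)\,\mu_j$, not convergence of $\int\chi\,\mu_j$. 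The missing device is to prove convergence and sequence-independence of $\int H\,\mu_j$ for every $H\in\mathcal{E}^{m,T}_0(\Omega)\cap\mathcal{C}(\overline\Omega)$ (swap one factor at a time, $\int H\,dd^cg^1_j\wedge S-\int H\,dd^c\tilde g^1_k\wedge S=\int(g^1_j-\tilde g^1_k)\,dd^cH\wedge S$, then use $g^1_j\downarrow u^1\leq\tilde g^1_k$, monotone convergence, and symmetrize), and then to write an arbitrary test function as a scaled difference of two such $H$'s, e.g. $\chi=\varepsilon^{-1}\left[(\varepsilon\chi+\psi)-\psi\right]$ with $\psi\in\mathcal{E}^{m,T}_0(\Omega)$ smooth and strictly $m$-sh near the support of $\chi$ (this is where $m$-hyperconvexity really enters). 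Without that decomposition your argument controls only integrands from $\mathcal{E}^{m,T}_0$, not weak convergence against test functions. Incidentally, the truncation $\max(g^q_j,-k)$ is unnecessary: the approximants are already bounded with the right boundary behaviour, so property (7) and Prop.~3 apply to them directly.

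Second, in (a) the Chern--Levine--Nirenberg-type bound is asserted exactly at its only delicate point: lemma 2 compares two functions that are ordered near $\partial\Omega\cap{\rm Supp}\,T$, whereas the given sequence $(g^q_j)_j$ and the defining sequence $(h^q_j)_j$ from Definition 5 are related only through their common limit $u^q$ on $\omega$. One needs an explicit mechanism to transfer $\sup_j\int_\Omega(dd^ch^q_j)^p\wedge T<+\infty$ to the mixed masses built from the $g$'s --- for instance passing to $\max(g^q_j,h^q_j)\in\mathcal{E}^{m,T}_0(\Omega)$, using the comparison principle for ordered functions of $\mathcal{E}^{m,T}_0$, and a mixed H\"older-type energy inequality for $\mathcal{E}^{m,T}_0$, which is stronger than the Blocki inequality (4) you quote and must itself be verified for a general current $T$. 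Since this estimate is the heart of the Cegrell and Lu proofs, leaving it at ``apply lemma 2 and Blocki, iterated $p$ times'' is a genuine gap rather than a routine detail.
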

As a consequence, it is clear that the complex hessian operator is well defined for $u^q\in{\mathcal E}^{m,T}(\Omega)$. More precisely, if $u^{q}\in{\mathcal E}^{m,T}(\Omega), 1\leq q \leq p $, then $ dd^c u^{1} \wedge\cdots\wedge dd^c u^{p} \wedge T$ is a positive measure as the weak limit of the positive measure obtained in Thm.5.
\begin{defn} Consider the class ${\mathcal K}^{m,T}(={\mathcal K}^{m,T}(\Omega))\subset{\mathscr P}_m^-(\Omega)$, such that:
\begin{enumerate}
\item If $u\in{\mathcal K}^{m,T}$, $ v\in{\mathscr P}_m^-(\Omega)$ then $\max(u,v) \in{\mathcal K}^{m,T}.$
\item If $u\in{\mathcal K}^{m,T}$, $(u_{j})_j \subset{\mathscr P}_m^-(\Omega)\cap L^{\infty}_{loc}(\Omega), u_{j} \downarrow u$, then the sequence of measures $(dd^cu_{j})^{p}\wedge T$ weakly convergent.
\end{enumerate}
\end{defn}
\begin{pro} ${\mathcal E}^{m,T}(\Omega) \subset{\mathcal K}^{m,T}(\Omega).$
\end{pro}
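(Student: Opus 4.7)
The plan is to verify that every $u\in\mathcal{E}^{m,T}(\Omega)$ satisfies both conditions defining the class $\mathcal{K}^{m,T}$ in Definition 7, working locally and combining Theorem 5 with the comparison lemmas 2 and 3.

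For condition (1), let $u\in\mathcal{E}^{m,T}(\Omega)$ and $v\in\mathscr{P}_m^-(\Omega)$. Fix $z_0\in\Omega$ and, invoking the very definition of $\mathcal{E}^{m,T}$, pick a neighborhood $\omega$ and a decreasing sequence $(u_s)_s\subset\mathcal{E}_0^{m,T}(\Omega)$ with $u_s\downarrow u$ on $\omega$ and $M:=\sup_s\int_\Omega(dd^cu_s)^p\wedge T<+\infty$. By Properties 2, item (2), $w_s:=\max(u_s,v)\in\mathcal{E}_0^{m,T}(\Omega)$, and $w_s\downarrow\max(u,v)$ on $\omega$. It only remains to obtain a uniform mass bound for $\int_\Omega(dd^cw_s)^p\wedge T$: on $\{u_s>v\}$ Lemma 3 gives $(dd^cw_s)^p\wedge T=(dd^cu_s)^p\wedge T$, while on $\{u_s<v\}=\{u_s<w_s\}$ Lemma 2 applied with $r=1$ and $w_1=\cdots=w_p=0$ to the pair $(u_s,w_s)$ yields $\int_{\{u_s<w_s\}}(dd^cw_s)^p\wedge T\leq\int_{\{u_s<w_s\}}(dd^cu_s)^p\wedge T$; handling the coincidence set $\{u_s=v\}$ by a standard $\varepsilon$-perturbation argument then gives $\int_\Omega(dd^cw_s)^p\wedge T\leq M$. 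Hence $\max(u,v)\in\mathcal{E}^{m,T}$.

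For condition (2), let $(v_j)_j\subset\mathscr{P}_m^-(\Omega)\cap L^\infty_{loc}(\Omega)$ decrease to $u$. Since weak convergence of measures is local, fix $z_0\in\Omega$, choose $\omega$ and $(g_s)_s\subset\mathcal{E}_0^{m,T}(\Omega)$ with $g_s\downarrow u$ on $\omega$, and let $\mu$ be the weak limit of $(dd^cg_s)^p\wedge T$ granted by Theorem 5. The strategy is to compare $(dd^cv_j)^p\wedge T$ with $(dd^cg_s)^p\wedge T$ through the auxiliary functions $w_{j,s}:=\max(v_j,g_s)$. By Properties 2, item (2), $w_{j,s}\in\mathcal{E}_0^{m,T}$, and for each fixed $s$ the sequence $(w_{j,s})_j$ decreases in $j$ to $\max(u,g_s)=g_s$ on $\omega$; so the Bedford-Taylor type continuity for locally bounded $m$-sh functions used inside Theorem 5 gives $(dd^cw_{j,s})^p\wedge T\to(dd^cg_s)^p\wedge T$ weakly as $j\to\infty$. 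On the other hand, Lemma 3 yields
$$\1_{\{v_j>g_s\}}(dd^cw_{j,s})^p\wedge T=\1_{\{v_j>g_s\}}(dd^cv_j)^p\wedge T.$$
Testing against $\varphi\in\mathscr{D}(\omega)$, I split
$$\int\varphi(dd^cv_j)^p\wedge T-\int\varphi(dd^cg_s)^p\wedge T=\int_{\{v_j>g_s\}}\varphi\bigl[(dd^cv_j)^p\wedge T-(dd^cw_{j,s})^p\wedge T\bigr]+R_{j,s},$$
where $R_{j,s}$ collects the contributions supported on $\{v_j\leq g_s\}$. The first term tends to $0$ as $j\to\infty$ with $s$ fixed by the preceding weak convergence. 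Since $v_j\geq u$ and $g_s\downarrow u$, the set $\{v_j\leq g_s\}$ shrinks as $s\to\infty$; bounding $R_{j,s}$ via Lemma 2 then yields an estimate controlling this tail uniformly in $j$. Sending $s\to+\infty$ first and $j\to+\infty$ second, one obtains the weak convergence of $(dd^cv_j)^p\wedge T$ to $\mu$, and the limit does not depend on the choice of $(v_j)$.

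The main obstacle is precisely the uniform control of the tail $R_{j,s}$: one needs an estimate of $\int_{\{v_j<g_s\}}(dd^cv_j)^p\wedge T$ that depends only on $M=\sup_s\int_\Omega(dd^cg_s)^p\wedge T$ and not on $j$. The natural approach is to compare $v_j$ with its truncation $\max(v_j,-k)\in\mathcal{E}_0^{m,T}$ (admissible by Properties 2, item (2)), apply the Blocki-type inequality of Properties 2, item (4), and feed the result back into Lemma 2; this is delicate because $v_j$ is only locally bounded and need not vanish on the boundary $\partial\Omega\cap\mathrm{Supp}\,T$. Once this uniform energy bound is secured, a standard quasicontinuity argument based on Theorem 1 closes the gap and shows that the limit measure $\mu$ is independent of $(v_j)$, exactly as in Lu's treatment of the particular case $T=\beta^{n-m}$.
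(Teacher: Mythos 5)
Your handling of condition (1) of Definition 6 is essentially the paper's own argument: they too take maxima with the approximating sequence and extract the uniform mass bound from Lemma 2. The differences are cosmetic but worth noting: the paper approximates $v$ by a continuous decreasing sequence and works with $\varphi_s=\max(u_s,v_s)$, and instead of your three-set splitting (which leaves the coincidence set $\{u_s=v\}$, possibly charged by $(dd^c w_s)^p\wedge T$, to an unspecified ``$\varepsilon$-perturbation''), they apply Lemma 2 once to the pair $(\lambda u_s,\varphi_s)$ with $\lambda>1$; since $\lambda u_s<\varphi_s$ off $\{u_s=0\}$, the coincidence set disappears at the harmless cost of a factor $\lambda^p$. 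Also, your appeal to Lemma 3 for the pair $(u_s,v)$ is not literally licit, because $v\in\mathscr{P}_m^-(\Omega)$ need not be locally bounded; one should first replace $v$ by $\max(v,-k)$ with $k>\sup_\Omega|u_s|$, which changes neither $\max(u_s,v)$ nor the set $\{u_s>v\}$.

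For condition (2) there is a genuine gap, and you identify it yourself: everything hinges on a bound for the tail $R_{j,s}$, i.e.\ on controlling $\int_{\{v_j\le g_s\}}(dd^cv_j)^{p}\wedge T$ uniformly in $j$, and this is only described as a ``natural approach'' (truncation, Blocki inequality, Lemma 2) without being carried out; as you remark, Lemma 2 cannot be applied to $v_j$ because $v_j$ has no prescribed behaviour on $\partial\Omega\cap{\rm Supp}\,T$, so the estimate you need is exactly what is missing. The paper sidesteps this difficulty entirely. Given $v_j\in\mathscr{P}_m^-(\Omega)\cap L^\infty_{loc}(\Omega)$ decreasing to $u\in\mathcal{E}^{m,T}(\Omega)$, fix $\varphi\in\mathcal{E}_0^{m,T}(\Omega)$ and set $g_j=\max(v_j,m_j\varphi)$, where the constants are chosen so that $m_j\varphi\downarrow-\infty$ (the paper's ``$m_j$ decreasing to $-\infty$'' is a sign slip). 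By Properties 2(2) and the cone property, $g_j\in\mathcal{E}_0^{m,T}(\Omega)$ and $g_j\downarrow u$, so Theorem 5 applies directly: $(dd^cg_j)^{p}\wedge T$ converges weakly and the limit is independent of the chosen sequence; no new energy estimate is required. If one then wants the convergence of $(dd^cv_j)^{p}\wedge T$ itself, it suffices to pick $m_j$ so large that $m_j\varphi<v_j$ on an exhaustion of $\Omega$ (possible since $v_j$ is locally bounded and $\varphi<0$), so that $g_j=v_j$ near any fixed compact for $j$ large and the two measures agree there by locality of the hessian operator for locally bounded $m$-sh functions. In short, the uniform tail bound you could not secure is never needed: the cutoff $\max(\cdot,m_j\varphi)$ converts an arbitrary locally bounded decreasing sequence into one inside $\mathcal{E}_0^{m,T}(\Omega)$, where Theorem 5 already contains all the hard work, and this is the key idea your proposal misses.
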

\begin{rem} Notice that the above inclusion is in fact an equality for $m=n$ and $T=1$ by Cegrell \cite{[5]} as well as for $T=\beta^{n-m}$, by Lu \cite{[10]}. This leads to the following important fact: in both cited cases ${\mathcal E}^{m,T}(\Omega)$ is the largest class for which $(1)$ and $(2)$ of definition 6 holds true. As a consequence of the proof of this main result Cegrell has observed that ${\mathcal E}^{n,1}(\Omega)$ is locally in ${\mathcal F}^{n,1}(\Omega)$. This observation was extended by Lu \cite{[10]} for the case $T=\beta^{m-n}$. In their paper \cite{[9]} the authors have stated without proof the same result for ${\mathcal E}^{n,T}(\Omega)$. It is important here to mention that this is not yet clear. In fact, the arguments used firstly by Cegrell and later by Lu, build essentially on the existence of a relative extremal function, which is a key difficulty when $T$ is a positive closed current.
\end{rem}
\begin{proof} Let $u \in{\mathcal E}^{m,T}(\Omega)$ then by definition 5, for every $z_{0} \in \Omega$, there exist a neighborhood $\omega$ of $z_{0}$ in $\Omega$ and a decreasing sequence $(u_{j})_{j} \subset {\mathcal E}^{m,T}_0 (\Omega)$ such that $u_{j}\downarrow u$ on $\omega$
and $\sup_{j} \int_{\Omega} (dd^c u_{j})^q \wedge T < +\infty$. Let $v\in{\mathscr P}_m^-(\Omega)$ then according to Thm.3.1 in \cite{[10]}, there exists a decreasing sequence $(v_{j})_j\subset {\mathcal E}_0^m(\Omega)\cap {\cal C}(\overline{\Omega})$, such that $v_{j}\downarrow v$ on $\Omega$. Setting $\varphi_{j}=\max(u_{j}, v_{j})\in{\mathcal E}^{m,T}_0 (\Omega)$, it is clear that $\varphi_{j} \downarrow \max(u,v)=\varphi$ on $ \omega$. Moreover, by using lemma 2 for $w_1=r-1$, for every $\lambda > 1$,
$$\displaystyle\int_{\Omega} (dd^c \varphi_{j})^{p} \wedge T = \displaystyle\int_ {\{\lambda u_{j}< \varphi_{j}\}}(dd^c\varphi_{j})^{p} \wedge T \leq \lambda^{p}\displaystyle\int_{\Omega} (dd^c u_{j})^{p}\wedge T.$$
Hence, $\sup_{j}\int_{\Omega}(dd^c \varphi_{j})^p \wedge T \leq\sup_{j}\int_{\Omega} (dd^c u_{j})^p\wedge T <+\infty$ and therefore, $\varphi \in{\mathcal E}^{m,T}(\Omega)$. It follows that the first point in the definition of ${\mathcal K}^{m,T}$ was satisfied. Concerning the second one, let $(u_{j})_j\subset{\mathscr P}_m^-(\Omega) \cap L^{\infty}_{loc}(\Omega)$ such that $u_{j} \downarrow u\in{\mathcal E}^{m, T}(\Omega)$ and consider a function $\varphi \in {\mathcal E}^{m,T}_0 (\Omega)$. Then we have $ g_{j}= \max (u_{j}, m_{j} \varphi) \in{\mathcal E}^{m,T}_0 (\Omega),$ $g_{j}\downarrow u \in \mathcal{E}^{m,T}(\Omega)$, where $(m_{j})_j$ is any sequence decreasing to $-\infty$. Thank's to Thm.5, $(dd^cg_{j})^{p}\wedge T$ converges weakly to $(dd^cu)^{p}\wedge T$.
\end{proof}
\subsection{Xing-type comparison principle for ${\mathcal F}^{m,T}$}
Now, we prove the main result of this section. It's a Xing-type inequality for the class ${\mathcal F}^{m,T}(\Omega)$.
\begin{theo}
Let $\Omega$ be an $m$-hyperconvex domain of $\mathbb{C}^{n}$, $T$ an $m$-positive closed current of bidimension $(p,p)$, $1\leq p\leq n$, $u$ and $v\in \mathcal{F}^{m,T}(\Omega)$ such that $u\leq v$ in a neighborhood of ${\rm Supp}(T)$. Let $1\leq k\leq p$, $r\geq 1$ and $w_1\in r+\mathcal{E}_0^{m,T}(\Omega)$. Then, we have :
$$
\begin{array}{lcl}
\ds\frac{1}{k!}\displaystyle\int_{\Omega}(v - u)^p dd^cw_1\wedge\cdots \wedge dd^cw_p\wedge T &+&\displaystyle\int_{\Omega}(r-\omega_1)(dd^cv)^k\wedge dd^cw_{k+1}\wedge\cdots\wedge dd^cw_p\wedge T \\&\leq &  \displaystyle\int_{\Omega}(r-\omega_1)(dd^cu)^k\wedge dd^cw_{k+1}\wedge\cdots\wedge dd^cw_p\wedge T,
\end{array}
$$
for every $w_j\in{\mathcal E}^{m,T}(\Omega)$ such that $ 0\leq w_j\leq 1$, $j=2,...,k$ and $w_{k+1},\ldots, w_p\in \mathcal{F}^{m,T}(\Omega)$.
\end{theo}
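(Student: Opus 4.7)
The plan is to reduce the statement to Lemma 2 by approximating the unbounded $\mathcal{F}^{m,T}$-functions from above by bounded $\mathcal{E}_0^{m,T}$-sequences, and then to pass to the weak limit through the continuity Theorem 5.

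First, using the defining property of $\mathcal{F}^{m,T}(\Omega)$ together with the approximation Theorem 4, I select decreasing sequences $(u^j),(v^j),(w_{k+1}^j),\ldots,(w_p^j)\subset\mathcal{E}_0^{m,T}(\Omega)\cap\mathcal{C}(\overline\Omega)$ converging pointwise to $u,v,w_{k+1},\ldots,w_p$, respectively. Since every function in $\mathcal{E}_0^{m,T}$ vanishes at $\partial\Omega\cap{\rm Supp}\,T$, the bounded pair $(u^j,v^j)$ automatically satisfies the boundary hypothesis $\liminf(u^j-v^j)\ge 0$ required by Lemma 2. For each fixed $j$, I form the current $T^j:=dd^cw_{k+1}^j\wedge\cdots\wedge dd^cw_p^j\wedge T$, which by the positivity and continuity results of Section 3 applied to bounded $m$-sh functions is a closed $m$-positive current of bidimension $(k,k)$. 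Applying Lemma 2 to $(u^j,v^j)$, the current $T^j$, and the weights $w_1,w_2,\ldots,w_k$ (shifting by constants if needed to enter Lemma 2's normalization) yields the analogue of the desired inequality, restricted to the set $\{u^j<v^j\}$.

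In the limit $j\to\infty$, Theorem 5 delivers weak convergence of $(dd^cu^j)^k\wedge T^j$ and $(dd^cv^j)^k\wedge T^j$ to $(dd^cu)^k\wedge dd^cw_{k+1}\wedge\cdots\wedge dd^cw_p\wedge T$ and its analogue with $v$; the bounded factor $r-w_1$ is handled through the quasi-continuity Theorem 1 together with a standard test-function regularization. The restriction $\{u^j<v^j\}$ passes to an integral over all of $\Omega$ because each measure in play is carried by ${\rm Supp}\,T$, and the hypothesis $u\le v$ near ${\rm Supp}\,T$ makes the complement $\{u>v\}$ negligible with respect to those measures.

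The principal obstacle is the first integral, whose integrand $(v^j-u^j)^p$ has no monotonicity in $j$ (both $u^j$ and $v^j$ decrease) and whose pointwise limit $v-u$ need not be bounded. I plan to control it by lower semicontinuity: on ${\rm Supp}\,T$ one has $v-u=\lim_j(v^j-u^j)$ pointwise, so Fatou's lemma supplies the correct lower bound for this term against the weakly convergent Hessian measure. The uniform integrability of the tails needed to legitimize the passage to the limit with the inequality preserved will be furnished by the Blocki-type energy estimate of Property 1(4), which bounds $\int(-\psi)^{p+1}(dd^c\varphi)^p\wedge T$ in terms of the energy of $\psi$ and the sup-norm of $\varphi$, and hence controls the contribution of $(v^j-u^j)^p$ uniformly in $j$ on every set of small relative $m$-capacity.
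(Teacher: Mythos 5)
Your overall strategy (approximate by $\mathcal{E}_0^{m,T}$ functions, apply the bounded comparison principle, pass to the limit) is the same in spirit as the paper's, but the limit passage as you describe it has genuine gaps, and it is precisely there that the paper's proof is organized differently. The paper does not pass to the limit in all sequences at once: in Step 1 it treats $u,v\in\mathcal{E}_0^{m,T}$ by first assuming $u=v$ near $\partial\Omega\cap{\rm Supp}\,T$ (chain of applications of Lemma 4 plus integration by parts), and then replaces $v$ by $v_\varepsilon=\max(u,v-\varepsilon)$, so that the only limit taken is the \emph{monotone increasing} limit $v_\varepsilon-u\uparrow v-u$ against a \emph{fixed} measure $dd^cw_1\wedge\cdots\wedge dd^cw_p\wedge T$, with $r-w_1$ lower semicontinuous; in Step 2 it uses a double index $(u_s,v_j)$, $j\le s$, replaces $v_j$ by $\max(u_j,v_j)$ to ensure $u_j\le v_j$, and lets $s\to\infty$ and then $j\to\infty$, one monotone limit at a time. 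Your scheme, in which $u^j$, $v^j$ and the current $T^j=dd^cw_{k+1}^j\wedge\cdots\wedge dd^cw_p^j\wedge T$ all move simultaneously, forces you to integrate the non-monotone, non-semicontinuous integrand $(v^j-u^j)^k$ against a measure that itself varies with $j$; Fatou's lemma for weakly convergent measures requires a lower semicontinuous integrand, which $(v-u)^k$ (a difference of u.s.c. functions) is not, and the Blocki estimate of Property~1(4) controls energies, not the uniform smallness on small-capacity sets you would need. You also never enforce $u^j\le v^j$ (the paper's max-replacement), so your sets $\{u^j<v^j\}$ do not obviously fill out $\Omega$ in the limit; the fact that $u\le v$ near ${\rm Supp}\,T$ does not transfer to the approximants.

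The most serious gap is the right-hand side term. You need $\limsup_j\int_\Omega(r-w_1)(dd^cu^j)^k\wedge T^j\le\int_\Omega(r-w_1)(dd^cu)^k\wedge dd^cw_{k+1}\wedge\cdots\wedge dd^cw_p\wedge T$, but $r-w_1$ is only lower semicontinuous (and merely quasicontinuous), so weak convergence of the Hessian measures from Theorem 5 gives the inequality in the \emph{wrong} direction for this term; quasicontinuity plus ``test-function regularization'' does not rule out mass of $(dd^cu^j)^k\wedge T^j$ concentrating on the small open exceptional set or escaping toward $\partial\Omega$. This is exactly why the paper invokes Proposition 4 (the corrected version of Prop.~5.16 of [9]), applied with $h=w_1-r\in\mathcal{E}_0^{m,T}$, which asserts genuine convergence $\int_\Omega h\,dd^cg_j^1\wedge\cdots\wedge dd^cg_j^p\wedge T\to\int_\Omega h\,dd^cu^1\wedge\cdots\wedge dd^cu^p\wedge T$ for decreasing $\mathcal{E}_0^{m,T}$-approximants; without this (or an equivalent substitute) your limit passage does not close. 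For the middle term lower semicontinuity of $r-w_1$ does give the correct $\liminf$ inequality, so that part of your argument is fine.
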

Notice that in the case when $m=n$ and $T=1$, the hypothesis $w_j\in{\mathcal E}^{m,T}(\Omega)$ is superfluous since ${\cal {PSH}}\cap L_{loc}^\infty\subset {\mathcal E}^{n,1}$ (see \cite{[5]}). In that case we recover a result of \cite{[11]}. For the proof we need the following lemma with a proof similar to a result in \cite{[11]}.
\begin{lem} Let $\Omega$ be an open bounded subset of $\mathbb{C}^{n}$, $T$ an $m$-positive closed current of bidimension $(1,1)$ and $u, v\in{\mathscr P}_m(\Omega)\cap L^{\infty}(\Omega)$ such that $u\leq v$ in a neighborhood of ${\rm Supp}(T)$. Assume that for each $\xi \in\partial \Omega\cap{\rm Supp}(T),$ $\ds\lim_{z\rightarrow \xi}[u(z)-v(z)]=0$ then for every $w \in {\mathscr P}_m(\Omega), 0\leq w\leq 1$, we have
$$\displaystyle\int_\Omega (v-u)^k dd^cw\wedge T\leq k\displaystyle\int_\Omega (1-w)(v-u)^{k-1}dd^cu\wedge T.$$
\end{lem}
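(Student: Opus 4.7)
The plan is to exploit the hypothesis $\lim_{z\to\xi}(u(z)-v(z))=0$ for $\xi\in\partial\Omega\cap\mathrm{Supp}(T)$, which guarantees that $f:=v-u$ and its positive powers tend to $0$ on $\partial\Omega\cap\mathrm{Supp}(T)$, so that Stokes' theorem applied against the closed $(n-1,n-1)$-current $T$ produces no boundary contribution. I would split into the cases $k=1$ and $k\ge 2$, handling the former via Lemma 2 and the latter by a direct integration-by-parts computation.

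For $k=1$, applying Lemma 2 with $p=1$, $r=1$, $w_1=w$ gives
\begin{equation*}
\int_{\{u<v\}}(v-u)\,dd^cw\wedge T+\int_{\{u<v\}}(1-w)\,dd^cv\wedge T\le\int_{\{u<v\}}(1-w)\,dd^cu\wedge T.
\end{equation*}
Discarding the nonnegative second term and extending the integrals to all of $\Omega$ (legitimate because $v-u=0$ on $\{u=v\}\cap\mathrm{Supp}(T)$ and $(1-w)\,dd^cu\wedge T\ge 0$) yields the desired inequality.

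For $k\ge 2$, after regularizing $u,v,w$ by smooth $m$-sh approximants (Proposition 1) and later passing to the limit via Theorem 2, I would establish two integration-by-parts identities. First, two successive integrations by parts together with $dd^c(f^k)=k(k-1)f^{k-2}df\wedge d^cf+kf^{k-1}dd^cf$ give
\begin{equation*}
\int_\Omega f^k\,dd^cw\wedge T=k(k-1)\int_\Omega w\,f^{k-2}df\wedge d^cf\wedge T+k\int_\Omega w\,f^{k-1}\,dd^cf\wedge T,
\end{equation*}
the boundary contributions vanishing because $f^k$ and $f^{k-1}$ both tend to $0$ on $\partial\Omega\cap\mathrm{Supp}(T)$. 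Second, integrating $\int_\Omega f^{k-1}dd^cf\wedge T$ by parts (with vanishing boundary) and rewriting $dd^cf=dd^cv-dd^cu$ yields
\begin{equation*}
\int_\Omega f^{k-1}dd^cu\wedge T=\int_\Omega f^{k-1}dd^cv\wedge T+(k-1)\int_\Omega f^{k-2}df\wedge d^cf\wedge T.
\end{equation*}
Combining these two identities with $dd^cf=dd^cv-dd^cu$, the difference $\int_\Omega f^k\,dd^cw\wedge T-k\int_\Omega(1-w)f^{k-1}dd^cu\wedge T$ collapses algebraically to
\begin{equation*}
-k(k-1)\int_\Omega(1-w)f^{k-2}df\wedge d^cf\wedge T-k\int_\Omega(1-w)f^{k-1}dd^cv\wedge T,
\end{equation*}
which is manifestly nonpositive: $f\ge 0$ on $\mathrm{Supp}(T)$, $0\le 1-w\le 1$, $df\wedge d^cf\wedge T\ge 0$ (since $df\wedge d^cf$ is a positive $(1,1)$-form), and $dd^cv\wedge T\ge 0$ by the $m$-positivity of $T$ and the $m$-subharmonicity of $v$.

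The main obstacle is the rigorous justification of the integrations by parts when $u,v,w$ are only bounded $m$-sh, so that $d^cf$ is merely a current. I would handle this by working on $\Omega_\delta=\{\rho<-\delta\}$, applying classical Stokes there, and letting $\delta\to 0$: each boundary integrand involves a factor $f^j$ with $j\ge 1$, which tends uniformly to $0$ on $\mathrm{Supp}(T)\cap\partial\Omega_\delta$. The dichotomy between $k=1$ and $k\ge 2$ is essential: for $k=1$ the factor $f^{k-1}=1$ does not vanish at the boundary, so the second identity above fails, and one must instead appeal to Lemma 2.
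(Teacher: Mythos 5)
Your algebraic core is sound, and it is in fact essentially the computation that the paper itself never writes out: for Lemma 4 the paper only says the proof is "similar to a result in [11]", and the [11]-style argument is exactly the expansion of $dd^c\bigl((v-u)^k\bigr)$ against $T$ together with the signs of $1-w$, $df\wedge d^cf\wedge T$ and $dd^cv\wedge T$. I checked that your two identities and the collapse of the difference to $-k(k-1)\int_\Omega(1-w)f^{k-2}df\wedge d^cf\wedge T-k\int_\Omega(1-w)f^{k-1}dd^cv\wedge T$ are correct, and that your $k=1$ reduction to Lemma 2 is legitimate (the measures involved live on ${\rm Supp}(T)$, where $v-u\geq 0$). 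Incidentally the case split is unnecessary: one integration by parts against $w-1$, namely $\int_\Omega f^k\,dd^cw\wedge T=\int_\Omega(w-1)\,dd^c(f^k)\wedge T$, followed by the same expansion, gives the inequality for every $k\geq1$ at once, the term with coefficient $k(k-1)$ simply vanishing when $k=1$.

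The genuine gap is the justification of those integrations by parts. Your remedy, working on $\Omega_\delta=\{\rho<-\delta\}$ and letting $\delta\to0$ because each boundary integrand carries a factor $f^j$, $j\geq1$, tending uniformly to $0$ on $\partial\Omega_\delta\cap{\rm Supp}(T)$, is not sufficient: the smallness of $f^j$ must be paired with a bound, uniform in $\delta$, on the boundary masses of the accompanying currents ($d^cw\wedge T$, $f^{k-1}d^cf\wedge T$, etc.), and no such bound follows from the hypotheses for merely bounded $m$-sh data; as stated the limit $\delta\to0$ could fail. Moreover, mollification does not preserve the hypotheses "$u\leq v$ near ${\rm Supp}(T)$" and "$u-v\to0$ on $\partial\Omega\cap{\rm Supp}(T)$" up to the boundary, and the convergence of the gradient terms $df_\varepsilon\wedge d^cf_\varepsilon\wedge T$ is not covered by Theorem 2 as stated (one has to write $2\,df\wedge d^cf=dd^c(f^2)-2f\,dd^cf$ and invoke the convergence results for those products). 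The standard repair, which is the one used in [11] and echoed in Step 1 of the proof of Theorem 6 of this paper, is: first prove the inequality when $u=v$ in a neighborhood of $\partial\Omega\cap{\rm Supp}(T)$, so that every form being differentiated vanishes near the boundary on a neighborhood of ${\rm Supp}(T)$ and Stokes applies with no boundary terms at all; then, for general $v$, set $v_\varepsilon=\max(u,v-\varepsilon)$, which coincides with $u$ near $\partial\Omega\cap{\rm Supp}(T)$ precisely because of the limit hypothesis, apply the previous case to $(u,v_\varepsilon)$, and let $\varepsilon\downarrow0$ using $v_\varepsilon\uparrow v$, monotone convergence and the continuity theorems. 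Without this device (or an explicit selection of levels $\delta$ along which the boundary masses stay bounded), your boundary-term argument does not go through, even though the rest of the proof is correct.
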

\begin{proofof}{\it Theorem 6.} \textbf{Step 1: case when $u,v\in\mathcal{E}_0^{m,T}(\Omega)$}.\\ For simplicity, setting $R=dd^cw_{k+1}\wedge\ldots\wedge dd^cw_p\wedge T$. We begin by the case $u=v$ in a neighborhood of $\partial\Omega\cap{\rm Supp}(T)$. By lemma 4 and the fact that $1- w_j\leq 1$, we get
$$
\begin{array}{lcl}
& &\displaystyle\int_\Omega (v-u)^k dd^cw_1\wedge\cdots\wedge dd^cw_p\wedge T=\\&=&\displaystyle\int_\Omega (v-u)^k dd^cw_1\wedge\cdots\wedge dd^cw_k\wedge R \\&\leq & k\displaystyle\int_\Omega(v-u)^{k-1}dd^cu\wedge dd^cw_1\wedge\cdots\wedge dd^cw_{k-1}\wedge R \\&\leq\cdots\leq& k!\displaystyle\int_\Omega (v-u)dd^cw_1\wedge(dd^cu)^{k-1}\wedge R \\&\leq & k!\displaystyle\int_\Omega (v-u)dd^cw_1\wedge\left[\sum_{s=0}^{k-1}(dd^cu)^s\wedge(dd^cv)^{k-s-1}\right]\wedge R \\&= & k!\displaystyle\int_\Omega (w_1-r)dd^c(v-u)\wedge\left[\sum_{s=0}^{k-1}(dd^cu)^s\wedge(dd^cv)^{k-s-1}\right]\wedge R  \\&= & k!\displaystyle\int_\Omega (r-w_1)dd^c(u-v)\wedge\left[\sum_{s=0}^{k-1}(dd^cu)^s\wedge(dd^cv)^{k-s-1}\right]\wedge R \\&= & k!\displaystyle\int_\Omega (r-w_1)\left[(dd^cu)^k-(dd^cv)^k\right]\wedge R.
\end{array}
$$
Consequently,
$$
\begin{array}{lcl}
\ds\frac{1}{k!}\displaystyle\int_{\Omega}(v - u)^k dd^cw_1\wedge\cdots \wedge dd^cw_p\wedge T &+&\displaystyle\int_{\Omega}(r-\omega_1)(dd^cv)^k\wedge dd^cw_{k+1}\wedge\cdots\wedge dd^cw_p\wedge T \\&\leq &  \displaystyle\int_{\Omega}(r-\omega_1)(dd^cu)^k\wedge dd^cw_{k+1}\wedge\cdots\wedge dd^cw_p\wedge T.
\end{array}
$$
Now, let us prove the general case. For $\varepsilon >0$, denote by $v_\varepsilon=\max(u, v-\varepsilon)$. It is clear that $v_\varepsilon\uparrow v$ on $\Omega$ when $\varepsilon\downarrow 0 ,  v_\varepsilon\geq u$ on $\Omega$ and $v_\varepsilon=u$ in a neighborhood of $\partial\Omega\cap{\rm Supp}(T)$. Using the preceding case, we have
\begin{equation}
\frac{1}{k!}\displaystyle\int_{\Omega}(v_\varepsilon - u)^k dd^cw_1\wedge\cdots \wedge dd^cw_p\wedge T +\displaystyle\int_{\Omega}(r-w_1)(dd^cv_\varepsilon)^k\wedge R \leq  \displaystyle\int_{\Omega}(r-w_1)(dd^cu)^k\wedge R.
\end{equation}
Now we claim that $(dd^cv_\varepsilon)^k\wedge R$ converges weakly to $(dd^cv)^k\wedge R$. In fact, since integration by parts is holds for $\mathcal{F}^{m,T}(\Omega)$, if we consider $h\in\mathcal{E}_0^{m,T}(\Omega)$, we have $$\int_\Omega h(dd^cv_\varepsilon)^k\wedge R=\int_\Omega v_\varepsilon dd^ch\wedge(dd^cv_\varepsilon)^{k-1}\wedge R\leq \int_\Omega vdd^ch\wedge(dd^cv_\varepsilon)^{k-1}\wedge R=\int_\Omega hdd^cv\wedge(dd^cv_\varepsilon)^{k-1}\wedge R.$$
Repeating the same argument, one get the inequality $\int_\Omega h(dd^cv_\varepsilon)^k\wedge R\leq \int_\Omega h(dd^cv)^k\wedge R$. To see the converse inequality, observe that
$$
\begin{array}{lcl}
\ds\int_\Omega h(dd^cv_\varepsilon)^k\wedge R&=&\ds\int_\Omega v_\varepsilon dd^ch\wedge(dd^cv_\varepsilon)^{k-1}\wedge R\\&\geq&\ds\int_\Omega (v-\varepsilon) dd^ch\wedge(dd^cv_\varepsilon)^{k-1}\wedge R\\&=&\ds\int_\Omega v dd^ch\wedge(dd^cv_\varepsilon)^{k-1}\wedge R-\varepsilon\ds\int_\Omega dd^ch\wedge(dd^cv_\varepsilon)^{k-1}\wedge R\\&=&\ds\int_\Omega h dd^cv\wedge(dd^cv_\varepsilon)^{k-1}\wedge R-\varepsilon\ds\int_\Omega dd^ch\wedge(dd^cu)^{k-1}\wedge R.
\end{array}
$$
In the later equality we use an integration by parts and the fact that $v_\varepsilon=u$ near the boundary. We continue in the same line, we easily obtain the following inequality :
$$\int_\Omega h(dd^cv_\varepsilon)^k\wedge R\geq\int_\Omega h(dd^cv)^k\wedge R-\varepsilon\sum_{s=0}^{k-1}\int_\Omega dd^ch\wedge(dd^cv)^s\wedge(dd^cu)^{k-1-s}\wedge R.$$
Since $\mathcal{F}^{m,T}(\Omega)$ is convex, we see that the integral $\int_\Omega dd^ch\wedge(dd^cv)^s\wedge(dd^cu)^{k-1-s}\wedge R$ is finite, and this complete the proof of the claim. On the other hand, since $0\leq v_\varepsilon -u \uparrow v-u$ when $\varepsilon \downarrow 0$ and $r-w_1$ is lower semi-continuous, by passing to the limit in $(4.1)$, when $\varepsilon\l 0$, we obtain:
$$\frac{1}{k!}\displaystyle\int_{\Omega}(v - u)^k dd^cw_1\wedge\cdots \wedge dd^cw_p\wedge T +\displaystyle\int_{\Omega}(r-\omega_1)(dd^cv)^k\wedge R \leq   \displaystyle\int_{\Omega}(r-w_1)(dd^cu)^k\wedge R.$$
\textbf{Step 2: case when $u,v\in\mathcal{F}^{m,T}(\Omega)$}.\\ By definition, there exists two sequences $(u_j)_j$ and $(v_j)_j $ in $\mathcal{E}^{m,T}_0(\Omega)$ such that $ u_j\downarrow u$ and $v_j\downarrow v$ on $\Omega$. Replacing $v_j$ by $\max(u_j,v_j)$, we may assume that $u_j\leq v_j ~~\forall j\geq 1$. Then, by applying Step 1, for every $1\leq j\leq s$, we have
$$
\begin{array}{lcl}
\ds\frac{1}{k!}\displaystyle\int_{\Omega}(v_j - u_s)^k dd^cw_1\wedge\cdots \wedge dd^cw_p\wedge T &+&\displaystyle\int_{\Omega}(r-\omega_1)(dd^cv_j)^k\wedge dd^cw_{k+1}\wedge\cdots\wedge dd^cw_p\wedge T \\&\leq &  \displaystyle\int_{\Omega}(r-\omega_1)(dd^cu_s)^k\wedge dd^cw_{k+1}\wedge\cdots\wedge dd^cw_p\wedge T.
\end{array}
$$
Since $w_1-r\in\mathcal{E}_0^{m,T}(\Omega)$, Prop.4 implies that
$$\ds\lim_{s\l +\infty}\int_{\Omega}(r-\omega_1)(dd^cu_s)^k\wedge dd^cw_{k+1}\wedge\cdots\wedge dd^cw_p\wedge T=\int_{\Omega}(r-\omega_1)(dd^cu)^k\wedge dd^cw_{k+1}\wedge\cdots\wedge dd^cw_p\wedge T.$$
Then similarly as in Step 1, by letting $s\rightarrow +\infty$ and $j\rightarrow +\infty$ in this order, we obtain the desired inequality:
$$
\begin{array}{lcl}
\ds\frac{1}{k!}\displaystyle\int_{\Omega}(v - u)^k dd^cw_1\wedge\cdots \wedge dd^cw_p\wedge T &+&\displaystyle\int_{\Omega}(r-\omega_1)(dd^cv)^k\wedge  dd^cw_{k+1}\wedge\cdots\wedge dd^cw_p\wedge T \\&\leq &  \displaystyle\int_{\Omega}(r-\omega_1)(dd^cu)^k\wedge dd^cw_{k+1}\wedge\cdots\wedge dd^cw_p\wedge T.\finpr
\end{array}
$$
\end{proofof}
Before ending this section let us state the following problem related to Thm.4:\\
{\bf Problem:} {\it  Is Theorem 4 remains true if we remove the assumption  $\int_{\Omega} (dd^c u)^p \wedge T < +\infty?$}

According to the papers \cite{[5]} and \cite{[10]}, if $T=\beta^{n-m}$ or more generally if $T$ satisfies the following hypothesis
  {\it there is an $m$-sh exhaustive and continuous function $v$ on $\overline\Omega$ such that Supp$\left(T\wedge(dd^cv)^p\right)\Subset\Omega$}, the answer to the previous question is positive.
\section{Complex hessian operator and $m$-potential current}
Let $T$ be a closed positive current of bidimension $(p,p)$. In this section, we associate to $T$, for each $m\geq p+1$, an $m$-potential by means of a local convolution of $T$ with $h_m\beta^{m-1}$, where $h_m$ is an $(n-m+1)$-sh function of Riesz-kernel-type. The important special case $m=n$ corresponds to the well-known Lelong-Skoda local potential of $T$ which had found a number of important applications in the study of the complex Monge-Amp\`ere operator (see \cite{[3]}). By following the work of \cite{[3]}, we prove firstly a result on the continuity of the complex hessian operator for a class of currents different to the one studied in the previously two sections.
\subsection{Complex hessian operator}
We have already seen that the complex hessian operator is well defined and continuous on decreasing sequence of locally bounded $m$-sh functions. It turns out in the following theorem that this happen also for a large class of $m$-positive not necessarily closed currents. Therefore, one can follow the lines of \cite{[3]} to prove :
\begin{theo}
 Let $1\leq p\leq m\leq n$, $S_m$ be a current of bidimension $(n-m+p,n-m+p)$ on $\Omega$ and $u_1,...,u_q$, $p\leq q$, are locally bounded $m$-sh functions on $\Omega$. Assume that there exists $(S_m^j)_j$ a sequence of smooth $(m-p,m-p)$-form on $\Omega$ such that $S_m^j$ is $m$-negative, $dd^cS_m^j$ is $m$-positive and $(S_m^j)_j$ decreases weakly to $S_m$. Then,
 \begin{enumerate}
 \item The sequence $S_m^j\wedge\beta^{n-m}\wedge dd^cu_1\wedge ...\wedge dd^cu_q$ converges weakly on $\Omega$ to a limit denoted by $S_m\wedge\beta^{n-m}\wedge dd^cu_1\wedge ...\wedge dd^cu_q$. This current is $m$-positive.
\item $\forall$ $E$, Borel subset of $\Omega$ and $\forall \varphi$ strongly positive continuous $(p-q,p-q)$-form, we have $$\ds\lim_{j\l +\infty}\int_E S_m^j\wedge\beta^{n-m}\wedge dd^cu_1\wedge ...\wedge dd^cu_q\wedge\varphi=\int_E S_m\wedge\beta^{n-m}\wedge dd^cu_1\wedge ...\wedge dd^cu_q\wedge\varphi.$$
\item $\forall L, K$ two compacts subset of $\Omega$, with $K\subset{\ds\mathop L^\circ}$, there exists a constant $c_{K,L}$ such that
$$\|S_m\wedge\beta^{n-m}\wedge dd^cu_1\wedge\cdots\wedge dd^cu_q\|_K\leq c_{K,L}\|S_m\|_L \|u_1\|_{\infty}(L)\cdots\|u_q\|_{\infty}(L),$$
where $\|S_m\|_L=-\int_LS_m\wedge\beta^{n-m+p}$ and $\|u\|_{\infty}(L)=\sup\{|u(z)|,\ z\in L\}$.
 \item Assume that $u_1^j,...,u_q^j$, are sequences of $m$-sh functions decreasing pointwise respectively to $u_1,...,u_q$, then
 $S_m\wedge\beta^{n-m}\wedge dd^cu_1^j\wedge ...\wedge dd^cu_q^j$ converges in the sense of currents to $S_m\wedge\beta^{n-m}\wedge dd^cu_1\wedge ...\wedge dd^cu_q$.
 \end{enumerate}
\end{theo}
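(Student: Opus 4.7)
The plan is to mimic the inductive scheme of Ben Messaoud-El Mir \cite{[3]}, with $(S_m^j)_j$ playing the role of their positive closed current (up to sign), but now required to carry along both the $m$-negativity of $S_m^j$ and the $m$-positivity of $dd^cS_m^j$. Introduce inductively
$$R_k^j := S_m^j\wedge\beta^{n-m}\wedge dd^cu_1\wedge\cdots\wedge dd^cu_k,\qquad 0\le k\le q.$$
Since each $S_m^j$ is smooth and the $u_i$ are locally bounded $m$-sh, $R_k^j$ is well defined by the Bedford-Taylor procedure described in Lu \cite{[10]}. The base case $k=0$ reduces to the monotone weak limit $S_m^j\wedge\beta^{n-m}\downarrow S_m\wedge\beta^{n-m}$, for which assertions (1), (2) and (3) are immediate.

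For the induction step the key point is a uniform mass estimate at level $k-1$. On compacts $K\Subset L\Subset\Omega$ I would test $R_{k-1}^j$ against a suitable cutoff form and iteratively integrate by parts via Stokes, moving each $dd^cu_i$ onto the cutoff. Each such Stokes step produces two kinds of terms: one involving a bounded $u_i$ times an operator of lower order (handled by the inductive hypothesis), and a new term involving $u_1\cdots u_{k-1}$ paired against $dd^cS_m^j\wedge\beta^{n-m}$, which is controlled by the hypothesis that $dd^cS_m^j$ is $m$-positive with uniformly locally bounded mass (the latter bound coming from $dd^c$ applied to the weakly decreasing sequence $S_m^j\downarrow S_m$).

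Once the mass bound is in hand at level $k-1$, define
$$R_k := dd^c(u_k\cdot R_{k-1}),$$
which makes sense because $R_{k-1}$ is a current of order zero and $u_k$ is locally bounded. To identify this with the weak limit of $R_k^j$, test against a smooth compactly supported $\varphi$ of complementary bidegree: by Stokes' formula (valid since $S_m^j$ is smooth),
$$\langle R_k^j,\varphi\rangle = \langle R_{k-1}^j,\,u_k\,dd^c\varphi\rangle,$$
and the right hand side converges to $\langle R_{k-1},\,u_k\,dd^c\varphi\rangle$ by the weak convergence $R_{k-1}^j\to R_{k-1}$ combined with Thm.1 (quasicontinuity): a standard regularization $u_k^\varepsilon\downarrow u_k$ splits $u_k\,dd^c\varphi$ into a continuous piece (on which weak convergence applies directly) and a piece supported on an open set of arbitrarily small $cap_{m,\beta^{n-m}}$-capacity (on which the masses of $R_{k-1}^j$ are uniformly small thanks to the inductive mass estimate). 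Assertion (2) then follows by approximating indicators of Borel sets against the limiting positive measure, and (4) is obtained by a diagonal argument combining the above with the continuity theorem of Lu \cite{[10]} applied for smooth $S_m$.

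The main obstacle I anticipate is the uniform mass estimate at each inductive level. The Chern-Levine-Nirenberg scheme must be rerun with the signed smooth current $S_m^j$ in place of a positive current, and the argument has to genuinely exploit both the $m$-negativity of $S_m^j$ and the $m$-positivity of $dd^cS_m^j$ so that the bounds remain uniform in $j$, rather than relying on delicate cancellations between the positive and negative parts. Once this estimate is secured, the weak convergence and joint continuity statements reduce to the same diagonal-plus-quasicontinuity procedure as in \cite{[3]}, now driven by the capacity $cap_{m,\beta^{n-m}}$ rather than the classical one.
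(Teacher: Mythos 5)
Your central identification step rests on the identity $\langle R_k^j,\varphi\rangle=\langle R_{k-1}^j,\,u_k\,dd^c\varphi\rangle$ and on defining the limit by $R_k:=dd^c(u_k\,R_{k-1})$. Both are incorrect here, precisely because $S_m^j$ (hence $R_{k-1}^j$ and $R_{k-1}$) is \emph{not closed}: moving $dd^c$ across produces the additional term $u_k\,dd^cS_m^j\wedge\beta^{n-m}\wedge dd^cu_1\wedge\cdots\wedge dd^cu_{k-1}\wedge\varphi$ together with cross terms in $dS_m^j\wedge d^c(\cdot)$ and $d^cS_m^j\wedge d(\cdot)$, so $dd^c(u_k R_{k-1})$ differs from $S_m\wedge\beta^{n-m}\wedge dd^cu_1\wedge\cdots\wedge dd^cu_k$ by exactly those terms, and your scheme identifies the wrong current. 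In the paper no inductive $dd^c(u\,\cdot)$ definition is used at all: for each fixed $j$ the product is simply the smooth form $S_m^j$ wedged with the closed positive current $\beta^{n-m}\wedge dd^cu_1\wedge\cdots\wedge dd^cu_q$ furnished by Lu's Bedford--Taylor theory, and the entire problem is the passage to the limit in $j$.

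For that limit the paper uses two devices absent from your proposal. First, monotonicity in $j$: since $(S_m^j)_j$ decreases weakly and $\beta^{n-m}\wedge dd^cu_1\wedge\cdots\wedge dd^cu_q$ is positive and closed, the quantities $\int\varphi\,S_m^j\wedge\beta^{n-m}\wedge dd^cu_1\wedge\cdots\wedge dd^cu_q$ are monotone for $\varphi\geq 0$, so statements (1)--(2) reduce to a one-sided bound; no quasicontinuity or capacity argument is needed. Second, the bound itself: one replaces $u_l$ by $v_l=\max(M\rho/\delta,u_l)$, equal to $u_l$ on $\Omega_\delta$ and to the fixed smooth function $M\rho/\delta$ near $\partial\Omega$, performs one Stokes integration against a cutoff $\psi$, and then \emph{discards by its sign} the positive term $(v_p+M)\psi\,dd^cS_m^j\wedge\alpha$; thus the $m$-positivity of $dd^cS_m^j$ enters through its sign, not through the uniform mass bound you invoke -- and a mass bound on $dd^cS_m^j$ alone would not control $dd^cS_m^j$ wedged with the remaining $dd^cu_i$, which is what actually appears; that would require a further Chern--Levine--Nirenberg argument exploiting that $dd^cS_m^j$ is closed, which you do not give. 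Moreover you explicitly leave this ``main obstacle'' unproved, and you do not address the point the paper singles out: for $q=p$ the Bedford--Taylor dummy-variable trick used in \cite{[3]} fails when $m<n$ because $m$-positivity is not preserved under pull-back (Remark 3), and this top case is precisely the one handled by the direct argument just described.
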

Notice that Thm.7 generalizes a result of \cite{[3]} for the case $m=n$. Also, we point out that the assumptions about $S_m^j$, assert that the current $S_m\wedge\beta^{n-m}$ is $m$-negative.
\begin{proof} Let $(u_s^k)_k$ be the regularized sequence of $u_s$, $s=1,...,q$, then according to Prop.1 and lemma 1 the form $\beta^{n-m}\wedge dd^cu_1^k\wedge ...\wedge dd^cu_q^k$ is positive. Moreover, this form converges weakly to $\beta^{n-m}\wedge dd^cu_1\wedge ...\wedge dd^cu_q$. Let $\psi$ be a $(p-q,p-q)$-from strongly positive. Then, the form $S_m^j\wedge\beta^{n-m}\wedge dd^cu_1^k\wedge ...\wedge dd^cu_q^k\wedge\psi$ (which is negative because $S_m^j$ is $m$-negative) converges weakly to $S_m^j\wedge\beta^{n-m}\wedge dd^cu_1\wedge ...\wedge dd^cu_q\wedge\psi$, as $k\l +\infty$. It follows that, for every $j$, $S_m^j\wedge\beta^{n-m}\wedge dd^cu_1\wedge ...\wedge dd^cu_q\wedge\psi$ is negative. The rest of the proof of Thm.7 for the case $q<p$, is an easy adaptation of the arguments used by \cite{[3]}. Assume now that $p=q$. When $m=n$, it is a quite easy idea of Bedford-Taylor to pull-back the problem in $\Omega\times\bC$, and then using the case $q<p$ combined with the Fubini theorem. Unfortunatly, this is not the case when $m<n$, as shown remark 3. Therefore, we argue directly. Since the problem is local, by a techniques going back to \cite{[2]}, we can assume that $\Omega=\{\rho<0\}$, where $\rho$ is a psh smooth function in the neighborhood of $\overline\Omega$ and $-M\leq u_l\leq -1$, $l=1,...,p$ for a constant $M>1$. Let $\varphi$ be a positive test function. In order to prove statement $(1)$, it suffices to establish that $\int_\Omega \varphi S_m^j\wedge\beta^{n-m}\wedge dd^cu_1\wedge ...\wedge dd^cu_p$ is bounded from below, since it's a decreasing sequence. To this end let $K$, be a compact set containing the support of $\varphi$ and choose $\delta$ sufficiently small such that $K\subset\Omega_\delta=\{\rho<-\delta\}$. Let $v_l=\max\left(\frac{M}{\delta}\rho,u_l\right)$, $l=1,...,p$. It is clear that $v_l$ is $m$-sh and smooth near $\overline\Omega$, $v_l=u_l$ on $\Omega_\delta$ and $v_l=\frac{M}{\delta}\rho$ on the corona $\Omega\smallsetminus{\Omega_{\delta/M}}$. For simplicity, setting $\gamma=\beta^{n-m}\wedge dd^cu_1\wedge ...\wedge dd^cu_{p-1}$ and $\alpha=\beta^{n-m}\wedge dd^cv_1\wedge ...\wedge dd^cv_{p-1}$. Since $S_m^j$ is $m$-negative
$$\int_\Omega \varphi S_m^j\wedge\gamma\wedge dd^cu_p\geq c\int_K S_m^j\wedge\gamma\wedge dd^cu_p\geq c\int_{\Omega_\delta}S_m^j\wedge\alpha\wedge dd^cv_p\geq c\int_{\Omega_{\delta/{2M}}}S_m^j\wedge\alpha\wedge dd^cv_p.$$
Let $\psi\in{\mathscr D}\left(\Omega_{\delta/{3M}}\right)$, $\psi\geq 0$ and $\psi\equiv 1$ on $\overline\Omega_{\delta/{2M}}$. By regularization and passing to the limit, one can asumme that each $v_l$ is smooth. Then, by using the Stokes formula we get
\begin{equation}
\begin{array}{lcl}
\ds\int_{\Omega_{\delta/{2M}}}S_m^j\wedge\alpha\wedge dd^cv_p&\geq& \ds\int_{\Omega_{\delta/{3M}}}\psi S_m^j\wedge\alpha\wedge dd^c(v_p+M)\\&=&\ds\int_{\Omega_{\delta/{3M}}}(v_p+M)dd^c(\psi S_m^j)\wedge\alpha
\\&=&\ds\int_{\Omega_{\delta/{3M}}}(v_p+M)\psi dd^cS_m^j\wedge\alpha+\ds\int_{\Omega_{\delta/{3M}}}(v_p+M)dd^c\psi\wedge S_m^j\wedge\alpha\\& &-2\ds\int_{\Omega_{\delta/{3M}}}(v_p+M)dS_m^j\wedge d^c\psi\wedge\alpha\\&\geq&\ds\int_{\Omega_{\delta/{3M}}}(v_p+M)dd^c\psi\wedge S_m^j\wedge\alpha-2\ds\int_{\Omega_{\delta/{3M}}}(v_p+M)dS_m^j\wedge d^c\psi\wedge\alpha.
\end{array}
\end{equation}
The last inequality because $(v_p+M)\psi dd^cS_m^j\wedge\gamma$ is positive. Also, since $v_l=\frac{M}{\delta}\rho$ in the region where $dd^c\psi=0$, we see that the first integral in the same inequality is finite since it converges. Concerning the second integral, using the same argument, the Stokes formula yields
 $$\ds\int_{\Omega_{\delta/{3M}}}(v_p+M)dS_m^j\wedge d^c\psi\wedge\alpha=-\left(M/\delta\right)^{p}\ds\int_{\Omega_{\delta/{3M}}\smallsetminus\overline\Omega_{\delta/{2M}}}S_m^j\wedge\beta^{n-m}\wedge d((\rho+\delta)d^c\psi)\wedge(dd^c\rho)^{p-1}.$$
The later integral is obviously finite. To prove statement $(3)$ for the case $q=p$, we can use equation $(5.1)$ and the same statement for the case $q=p-1$.
\end{proof}
Arguing as in \cite{[3]}, we can establish the following corollary of Thm.7.
\begin{cor} With the same notation as in Theorem 7, we have
\begin{enumerate}
\item $S_m^j\wedge\beta^{n-m}\wedge dd^cu_1^j\wedge ...\wedge dd^cu_q^j$ converge weakly to $S_m\wedge\beta^{n-m}\wedge dd^cu_1\wedge ...\wedge dd^cu_q$.
\item $S_m\wedge\beta^{n-m}\wedge dd^cu_1\wedge ...\wedge dd^cu_q$ is not depending on the the sequence $(S_m^j)_j$ that satisfy the hypothesis of theorem 7.
\end{enumerate}
\end{cor}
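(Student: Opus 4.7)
The plan is to deduce both assertions from Theorem 7, adapting the diagonal-extraction argument of \cite{[3]} to the $m$-positive setting; throughout, the induction and cutoff techniques used in the proof of Theorem 7 will do the heavy lifting.

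For Part (1), I proceed by induction on $q$. The case $q = 0$ is precisely the weak decrease $S_m^j \to S_m$. For the inductive step I fix a smooth positive test function $\varphi$ with compact support in $K \Subset L \Subset \Omega$ on which the $u_\ell^j$ and $u_\ell$ are uniformly bounded, and split the difference
$$
S_m^j\wedge\beta^{n-m}\wedge dd^cu_1^j\wedge\cdots\wedge dd^cu_q^j \;-\; S_m\wedge\beta^{n-m}\wedge dd^cu_1\wedge\cdots\wedge dd^cu_q
$$
into the single ``$S_m^j - S_m$'' piece, which tends weakly to zero by Theorem 7(1), together with the telescoping sum of $q$ terms
$$
S_m^j\wedge\beta^{n-m}\wedge dd^cu_1^j\wedge\cdots\wedge dd^cu_{\ell-1}^j\wedge(dd^cu_\ell^j - dd^cu_\ell)\wedge dd^cu_{\ell+1}\wedge\cdots\wedge dd^cu_q,\qquad 1\leq \ell\leq q.
$$
For each telescope term I pair against $\varphi$ and apply Stokes' formula to transfer $dd^c$ off the scalar factor $u_\ell^j - u_\ell$; this is the analogue of the calculation performed in the proof of Theorem 7 (compare equation (5.1)). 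The non-closedness of $S_m^j$ produces correction terms involving $dS_m^j$ and $dd^cS_m^j$, which I control through a cutoff supported in a corona $\Omega_{\delta/M}\smallsetminus \Omega_\delta$ disjoint from $K$, exploiting the $m$-positivity of $dd^cS_m^j$ and the uniform mass bound of Theorem 7(3). The interior contributions are dominated by $\|u_\ell^j - u_\ell\|_{L^1(L)}$ times a finite-mass form, and tend to zero by the monotone convergence $u_\ell^j\downarrow u_\ell$ together with the uniform boundedness of the involved masses.

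For Part (2), let $(S_m^j)_j$ and $(\tilde S_m^j)_j$ be two sequences satisfying the hypotheses of Theorem 7 and both weakly decreasing to $S_m$, with resulting limits $T$ and $\tilde T$. Given a smooth test form $\varphi$ of complementary bidegree, I approximate each $u_\ell$ by a smooth decreasing sequence $u_\ell^{(k)}\downarrow u_\ell$. For fixed $k$ the form $\beta^{n-m}\wedge dd^cu_1^{(k)}\wedge\cdots\wedge dd^cu_q^{(k)}\wedge\varphi$ is smooth, so the weak convergence $S_m^j\to S_m$ gives
$$
\int S_m^j\wedge\beta^{n-m}\wedge dd^cu_1^{(k)}\wedge\cdots\wedge dd^cu_q^{(k)}\wedge\varphi \;\longrightarrow\; \int S_m\wedge\beta^{n-m}\wedge dd^cu_1^{(k)}\wedge\cdots\wedge dd^cu_q^{(k)}\wedge\varphi
$$
as $j\to\infty$, with the same value obtained from $\tilde S_m^j$. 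Sending $k\to\infty$ and invoking Theorem 7(4) for both $(S_m^j)_j$ and $(\tilde S_m^j)_j$, a double-limit exchange, justified by the uniform mass estimate of Theorem 7(3), delivers $\langle T,\varphi\rangle = \langle\tilde T,\varphi\rangle$, hence $T=\tilde T$.

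The main obstacle in Part (1) is the boundary contribution arising from the non-closedness of $S_m^j$: the key is to confine the cutoff to the same corona used in the proof of Theorem 7 and to exploit the sign assumption on $dd^cS_m^j$. In Part (2) the subtlety lies in interchanging the $j$- and $k$-limits, which again rests on the uniform mass bound of Theorem 7(3).
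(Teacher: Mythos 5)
The paper itself offers no written proof of this corollary beyond the remark ``Arguing as in \cite{[3]}'', so your attempt has to be judged on its own. Your argument for part (2) is correct and is essentially the expected one: regularize the $u_\ell$'s, observe that for smooth data the $j$-limit of $\int S_m^j\wedge\beta^{n-m}\wedge dd^cu_1^{(k)}\wedge\cdots\wedge dd^cu_q^{(k)}\wedge\varphi$ only sees the weak limit $S_m$ and hence is the same for $(S_m^j)_j$ and $(\tilde S_m^j)_j$, then let $k\to\infty$ using Theorem 7(4) for each defining sequence separately. In fact no delicate ``double-limit exchange'' is needed there: for each fixed $k$ the two candidate currents agree exactly, and Theorem 7(4) handles the $k$-limit on each side.

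Part (1), however, contains a genuine gap at its crucial step. After telescoping and integrating by parts, the terms you must kill are of the form $\int(u_\ell^j-u_\ell)\,\varphi\,dd^cS_m^j\wedge\beta^{n-m}\wedge dd^cu_1^j\wedge\cdots$ and $\int(u_\ell^j-u_\ell)\,dd^c\varphi\wedge S_m^j\wedge\beta^{n-m}\wedge\cdots$, i.e.\ integrals of $u_\ell^j-u_\ell$ against measures that vary with $j$. Your claim that these are ``dominated by $\|u_\ell^j-u_\ell\|_{L^1(L)}$ times a finite-mass form'' is unjustified: these measures are in general singular with respect to Lebesgue measure, so $L^1$-convergence of $u_\ell^j$ controls nothing; moreover Theorem 7(3) bounds the mass of $S_m\wedge\beta^{n-m}\wedge dd^cu_1\wedge\cdots$, not of $dd^cS_m^j\wedge\beta^{n-m}\wedge\cdots$, so even the uniform mass bound you invoke is not available without an extra Chern--Levine--Nirenberg-type integration by parts. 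And even with uniformly bounded masses, monotone pointwise convergence $u_\ell^j\downarrow u_\ell$ does not give $\int(u_\ell^j-u_\ell)\,d\mu_j\to 0$ when $\mu_j$ varies (the convergence is not uniform at discontinuity points of $u_\ell$, and $\mu_j$ may concentrate there); this is exactly the difficulty that the Bedford--Taylor/Demailly monotone-continuity arguments are built to overcome, via the upper-semicontinuity inequality in one direction and an integration-by-parts (or quasicontinuity/capacity) argument in the other, or by rerunning the induction and corona construction of the proof of Theorem 7 with the two approximating sequences tied together, as in \cite{[3]}. A secondary point: your corona cutoff remark does not quite fit, since after pairing with an interior test function the terms in $d\varphi$, $d^c\varphi$, $dd^c\varphi$ are supported inside the compact set, not near $\partial\Omega$; the corona device of Theorem 7 works only after the replacement $v_\ell=\max\bigl(\frac{M}{\delta}\rho,u_\ell\bigr)$ and the auxiliary cutoff $\psi$, which your sketch does not set up.
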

As an example of $S_m$ in Thm.7, we can take $S_m=v\beta^{m-p}$, where $v$ is a negative $m$-sh function. Regarding definition 1, if $(v_j)_j$ is the standard regularization of $v$, one can easily check that $S_m^j=v_j\beta^{m-p}$ is $m$-negative and $dd^cS_m^j$ is $m$-positive. Building on the work of Ben Messaoud-El Mir \cite{[3]}, in the next subsection we give another example of such currents $S_m$, which involves some interesting properties.
\subsection{$m$-Potential current}
Let $T$ be a positive closed current of bidimension $(p,p)$ in an open set $\Omega$ of $\bC^n$. Let $\Omega_1\Subset\Omega$ and $\eta\in{\mathscr D}(\Omega)$, $0\leq \eta\leq 1 $ and $\eta\equiv 1 $ in a neighborhood of $\overline\Omega_1$.
\begin{defn} For every integer $p<m\leq n$, there exists a negative current of bidimension $(n-m+p+1,n-m+p+1)$ in $\bC^n$ denoted by $U_m=U_m(\Omega_1,T)$ and defined by:
  $$U_m(z) =\! -c_n\int_{x \in \bC^n}\!\eta (x) T(x)\wedge
{\beta^{m-1}(z-x)\over|z-x|^{\frac{2(m-1)}{n-m+1}}},\qquad{\rm where} \quad c_n= \ds {1\over (n-1)(4\pi )^n }.$$
\end{defn}
Let $h_m(x) =-c_n|x|^{\frac{-2(m-1)}{n-m+1}}=-c_n|x|^{-2\left( \frac{n}{n-m+1}-1\right)}$, be a Riesz kernel and put $\mu_m=\frac{c_n(m-1)}{n-m+1}$. A straightforward computation gives
\begin{equation}
dd^ch_m(x)=\ds\mu_m|x|^{\frac{-2n}{n-m+1}}\beta(x)-\frac{n\mu_m}{n-m+1}|x|^{-\frac{2(2n-m+1)}{n-m+1}}i\partial|x|^2\wedge\overline\partial|x|^2.
\end{equation}
 By using the Bin\^ome formula, it is not hard to get
$$\forall k\leq n-m+1,\qquad (dd^ch_m)^k\wedge\beta^{n-k}=(\mu_m)^k\frac{n(n-m-k+1)}{(n-m+1)}|x|^{-2kn/{n-m+1}}\beta^n .$$
It follows that $h_m$ is $(n-m+1)$-sh and satisfies $h_m\in L_{loc}^\alpha(\Omega)$, when $\alpha<{n^2-mn+n\over m-1}$ (see also \cite{[1]}). Denote by $K_m(x)= h_m(x)\beta^{m-1}(x)$, then $U_m(z) =(\eta. T)\star K_m (z)$. This means that the coefficients of $U_m$ are obtained as the convolution product of the coefficients of $\eta T$ (which are compactly supported measures) by $K_m$. It follows that the current $U_m$ has $L_{loc}^\alpha$, $\alpha<{{n^2-mn+n\over m-1}}$ as coefficients. It should be noted here that the border case $m=n$ is of key importance, since $h_n$ is the well-known Newton kernel and $U_n$ is nothing but the potential current of $T$ which is fundamental in the study of the Monge-Amp\`ere operator and the slicing theory of a positive closed current (see \cite{[3]}). Notice also that if $m=p+1$, $U_{p+1}$ is just a negative $(n-p)$-sh function. Let $\chi$ be a smooth positive compactly supported function in the unit ball of $\bC^n$ such that $\chi$ depending on $|z|$ and $\int_{\bC^n}\chi d\lambda_n=1$. Throughout the rest of this paper let $(\chi_j)_j$ be the associated regularization sequence to $\chi$. Denote by $K_m^j(z):=K_m\star\chi_j (z)= [h_m\star\chi_j(z)]\beta^{n-m}(z)$. In the remaining, we set
$$U_m^j(z) :=(\eta. T)\star K_m^j (z)=
   \int_{x \in \bC^n}\!\eta (x).(h_m\star\chi_j)(z-x) T(x)\wedge\beta^{m-1}(z-x) .$$
It is clear that $U_m^j$ is a smooth $(m-p-1,m-p-1)$-form in $\bC^n$. Since $h_m$ is $(n-m+1)$-sh, the regularization $(h_m\star\chi_j)_j$ is a sequence of smooth $(n-m+1)$-sh functions that decreases to $h_m$. It follows that the negative sequence $(U_m^j)_j$ decreases weakly to the current $U_m$. Let
$$U_m(z)=i^{(m-p-1)^2}\ds\sum_{|I|=|J|=m-p-1}U_{IJ}^mdz_I\wedge d\ov z_J .$$
A direct computations on the coefficients of the current $U_m$ gives
\begin{pro}
The trace $u_m(z)=\ds\sum_{I}U_{II}^m(z)$ is a negative $(n-m+1)$-sh function. Moreover, if $N=\min(p,m-p-1)$, for all $z\in\bC^n$, we have
$$u_m(z)=\left[\frac{2^{m-p-1}}{(n-m+p+1)!}\ds\sum_{s=0}^N\frac{(-1)^s(n-m)!s!(n-s)!}{(n-m-p-s)!(p-s)!}\right]\int_{x \in \bC^n}\eta (x) h_m(z-x)T(x)\wedge
\beta^p(x).$$
\end{pro}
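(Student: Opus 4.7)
The trace is extracted via the identity $\omega\wedge\beta^{n-q}=(n-q)!\,\mathrm{tr}(\omega)\,\beta^n/n!$, applied to $\omega=U_m$ with $q=m-p-1$ and the convention $U_m=i^{q^2}\sum U_{IJ}^m\,dz_I\wedge d\bar z_J$. This converts the problem into the evaluation of
\[
U_m(z)\wedge\beta_z^{n-m+p+1}=\int_x\eta(x)\,h_m(z-x)\,T(x)\wedge\beta(z-x)^{m-1}\wedge\beta_z^{n-m+p+1}.
\]
The first step is then to expand $\beta(z-x)=\beta_z+\beta_x-B-\bar B$, with $B=i\sum_j dz_j\wedge d\bar x_j$ and $\bar B=i\sum_j dx_j\wedge d\bar z_j$, by the multinomial theorem. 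Because $T(x)$ has $x$-bidegree $(n-p,n-p)$ and we wedge with $\beta_z^{n-m+p+1}$, a bidegree count leaves only the diagonal family $a=m-p-1-c,\ b=p-c,\ d=c$, with $c\in\{0,\dots,N\}$, $N=\min(p,m-p-1)$; all other terms drop out.

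For each such $c$, a direct reorganization yields
\[
(-B)^c\wedge(-\bar B)^c=(c!)^2\sum_{|J|=|K|=c}(dz_J\wedge d\bar z_K)\wedge(dx_K\wedge d\bar x_J),
\]
after which wedging with $\beta_z^{n-c}$ kills every off-diagonal term, since $(dz_J\wedge d\bar z_K)\wedge\beta_z^{n-c}\ne 0$ forces $J=K$. On this diagonal the trace identity gives the $J$-independent scalar $dz_J\wedge d\bar z_J\wedge\beta_z^{n-c}=i^{-c^2}(n-c)!\,\beta_z^n/n!$, while the $x$-side collapses via the elementary identity $\sum_{|J|=c}dx_J\wedge d\bar x_J=(-i)^c(-1)^{c(c-1)/2}\beta_x^c/c!$, so that every $x$-integral reduces to $\int\eta\,h_m\,T\wedge\beta^p$. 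All the collected phases combine into $i^{-c^2}(-i)^c(-1)^{c(c-1)/2}=(-1)^c$, producing a real scalar $\kappa_c$ for each $c$. Summing in $c$ yields $u_m(z)=\bigl(\sum_{c=0}^{N}\kappa_c\bigr)\int\eta\,h_m\,T\wedge\beta^p$, and the remaining task is to identify $\sum_c\kappa_c$ with the bracketed constant of the statement by a binomial-identity reshuffling.

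The qualitative assertions then follow immediately. The measure $\eta\,T\wedge\beta^p$ is positive by Lemma 1 and compactly supported; the Riesz-type kernel $h_m$ is non-positive and $(n-m+1)$-subharmonic, which is apparent from the explicit formula for $(dd^c h_m)^k\wedge\beta^{n-k}$ recalled in the paragraph preceding the proposition; and the global constant in front is non-negative. Since the convolution of an $(n-m+1)$-sh function with a positive compactly supported measure is again $(n-m+1)$-sh, and the sign of $h_m$ is preserved under this convolution, $u_m$ is a negative $(n-m+1)$-sh function.

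The main obstacle I expect is the last combinatorial step: bringing the sum produced by the expansion, $\sum_c (-1)^c(m-1)!\,(n-c)!/\bigl[(m-p-1-c)!(p-c)!\,c!\bigr]$, into the precise form $2^{m-p-1}\sum_s(-1)^s(n-m)!\,s!\,(n-s)!/\bigl[(n-m-p-s)!(p-s)!\bigr]$ prescribed in the statement, up to the common factor $1/(n-m+p+1)!$. This is a disciplined but non-trivial binomial-identity check where all the earlier sign and factorial bookkeeping is tested.
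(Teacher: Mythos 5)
Your proposal is correct and is essentially the paper's own argument: both expand $\beta(z-x)$ multinomially into $\beta(z)+\beta(x)$ plus the two mixed $(1,1)$-forms, keep by a bidegree count only the terms with equal powers of the mixed forms (the sum over $s=0,\dots,N$), extract the trace by wedging with $\beta^{n-m+p+1}(z)$ so that only diagonal multi-indices survive, reduce the $x$-side to $h_m\star(\eta T\wedge\beta^p)$, and leave the final sign-and-factorial bookkeeping as a direct computation, exactly as the paper does when it says the result follows by "a direct calculation of $U_m(z)\wedge\beta^{n-m+p+1}(z)$". The only soft spot is your unproved claim that the bracketed alternating constant is nonnegative: the negativity and $(n-m+1)$-subharmonicity of $u_m$ are obtained more safely as in the paper, either from the fact that $u_m$ is (up to a positive factor) the trace of the negative current $U_m$, or from the accompanying remark that the diagonal coefficients are convolutions of $h_m$ with compactly supported positive measures.
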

\begin{proof} By rewriting the proof of \cite{[3]}, it suffices to prove the equality in the sense of currents. Observe that the form $\beta^{(m-1)}(z-x)$ acting only by its component $G$ of bidegree $(p,p)$ in $x$ and $(m-p-1,m-p-1)$ in $z$. Let $\gamma=\ds\sum_{j=1}^n 2i dx_j\w d{\overline z_j}$. Since $\beta(z-x)=\b(z)-\gamma
-{\overline \gamma }+\b(x)\,$ and these forms are even, by the Bin\^ome formula, we get
$$G=\ds\sum_{s=0}^N\ds{(m-1)!\over (m-1-p-s)!\,(p-s)!\,(s!)^2} \beta^{p-s}(x)\w (\gamma\w{\overline\gamma})^s\w \b^{m-1-p-s}(z) .$$
On the other hand, it is not hard to see that
$$(\gamma\w{\overline\gamma})^s=(s!)^2\ds\sum_{|I|=|J|=s}(-1)^s\,2^s\,i^{s^2}dz_J\w
d\overline z_I \w 2^s\,i^{s^2}\,dx_I\w d\overline x_J.$$
Therefore,
$$U_m(z)=\ds\sum_{s=0}^N\, \ds{(-1)^s(m-1)!\over (m-1-p-s)!\,(p-s)!} \left[\sum_{|I|=|J|=s}B_{I,J}^m(s,z)
\b^{m-1-p-s}(z)\w 2^s i^{s^2} dz_J\w d\overline z_I \right],$$
where, $$B_{I,J}^m(s,z)=\int_{x \in \bC^n}\!\eta (x)
h_m(z-x).T(x)\wedge\beta^{p-s}(x)\w 2^s i^{s^2} dx_I\w d\overline x_J .$$
Summing up the diagonal coefficients by a direct calculation of $U_m(z)\wedge\beta^{n-m+p+1}(z)$, it is not difficult to deduce the result.
\end{proof}
\begin{rem} Let $\alpha=\sum_{j,k=1}^nidx_j\wedge d\ov x_k$, then $\alpha$ is positive and $\alpha^s=s!\sum_{|I|=|J|=s} i^{s^2} dx_I\w d\overline x_J$. Using the notations of the preceding proof, it is clear that $\sum_{|I|=|J|=s}B_{I,J}^m(s,z)$ is a negative $(n-m+1)$-sh function, because it is the convolution product of the compactly supported positive measure $\eta (x)T(x)\wedge\beta^{p-s}(x)\wedge\alpha^s(x)$ with $h_m$. Similarly, each $B_{I,I}^m(s,z)$ is a negative $(n-m+1)$-sh function.
\end{rem}
Using a convolution argument by the smooth kernel $(\chi_j)_j$, the $m$-potential current $U_m$ shares the following properties, which extend the border case $m=n$, established by \cite{[3]}.
\begin{pro} Let $X$ be the interior of $\{\eta\equiv 1\}$. With the above notations we have
\begin{enumerate}
\item $(dd^ch_m)^{n-m+1}\wedge\beta^{m-1}=c_{n,m}\delta_0.\beta^n$, $c_{n,n}=1$ and $\delta_0$ is the Dirac measure at the origin.
\item $U_m^j=U_m\star\chi_j$.
\item There exists $R_m^j\in{\cal C}_{m-p,m-p}^\infty(\bC^n)$, such that $dd^cU_m^j=p_{2\star}[p_1^\star(\eta T) \wedge \tau^\star(dd^cK_m)]\star\chi_j+R_m^j.$ Moreover, $R_m^j$ converges in ${\cal C}_{m-p,m-p}^\infty(X)$, to a smooth $(m-p,m-p)$-form $R_m$ satisfying $$dd^cU_m=p_{2\star}[p_1^\star(\eta T) \wedge \tau^\star(dd^cK_m)]+R_m,$$ in the sense of currents, where the current $p_{2\star}[p_1^\star(\eta T) \wedge \tau^\star(dd^cK_m)]$ is positive, equals to $\eta T$ if $m=n$ and has $L_{loc}^\alpha$ as coefficients for $\alpha<n-m+1$, if $m<n$.
\end{enumerate}
\end{pro}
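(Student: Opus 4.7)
The proof of Proposition~6 splits naturally into the three listed items. For item (1), I would use the identity
\begin{equation*}
(dd^ch_m)^k\wedge\beta^{n-k}=(\mu_m)^k\frac{n(n-m-k+1)}{n-m+1}|x|^{-2kn/(n-m+1)}\beta^n,\qquad k\leq n-m+1,
\end{equation*}
established just above the statement: specializing to $k=n-m+1$ the prefactor vanishes, so the smooth representative of $(dd^ch_m)^{n-m+1}\wedge\beta^{m-1}$ is zero on $\bC^n\smallsetminus\{0\}$. Local integrability of $h_m$ and of the intermediate coefficients $|x|^{-2kn/(n-m+1)}$ for $k<n-m+1$ guarantees that the iterated wedge product is well defined as a distribution; since it is supported at $\{0\}$ and the scaling $h_m(\lambda x)=\lambda^{-2(m-1)/(n-m+1)}h_m(x)$ forces the correct degree of homogeneity $-2n$, the distribution must be proportional to $\delta_0\cdot\beta^n$. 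The constant $c_{n,m}$ is then determined by testing against a radial cutoff and iterating Stokes' formula on a small ball, which reduces to an explicit boundary integral; the case $m=n$ is the classical Newton-kernel identity and yields $c_{n,n}=1$.

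Item (2) is an associativity statement for convolution. Since $K_m^j=K_m\star\chi_j$ by construction, and $\eta T$ is a compactly supported current of order zero while $K_m$ has $L_{loc}^\alpha$ coefficients and $\chi_j\in\mathscr{D}(\bC^n)$, Fubini gives
\begin{equation*}
U_m^j=(\eta T)\star K_m^j=(\eta T)\star(K_m\star\chi_j)=\bigl((\eta T)\star K_m\bigr)\star\chi_j=U_m\star\chi_j.
\end{equation*}

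For item (3), the main step, I would rewrite $U_m^j=p_{2\star}[p_1^\star(\eta T)\wedge\tau^\star K_m^j]$ and pass $dd^c_z$ inside, which is legitimate because $K_m^j$ is smooth and $p_1^\star(\eta T)$ does not depend on $z$. Since $\tau$ commutes with the total differential on $\bC^n\times\bC^n$, one has $\tau^\star(dd^cK_m^j)=dd^c(\tau^\star K_m^j)$, and splitting $d=d_x+d_z$ and $d^c=d^c_x+d^c_z$ gives
\begin{equation*}
dd^c_z(\tau^\star K_m^j)=\tau^\star(dd^cK_m^j)-\bigl(dd^c_x+d_xd^c_z+d_zd^c_x\bigr)(\tau^\star K_m^j).
\end{equation*}
Applying Stokes' formula fiberwise in $x$ to transfer $dd^c_x$, $d_x$, $d^c_x$ onto the factor $p_1^\star(\eta T)$, and exploiting $dT=d^cT=0$, the extra contribution collapses into a finite sum of convolutions of $d\eta\wedge T$, $d^c\eta\wedge T$, and $dd^c\eta\wedge T$ with $K_m^j$ or its first partial derivatives. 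Identifying $\tau^\star(dd^cK_m^j)$ with $\tau^\star(dd^cK_m)\star\chi_j$ (convolution in $z$) produces $dd^cU_m^j=p_{2\star}[p_1^\star(\eta T)\wedge\tau^\star(dd^cK_m)]\star\chi_j+R_m^j$ with $R_m^j$ smooth on $\bC^n$. On the open set $X\subset\{\eta\equiv 1\}$, the derivatives $d\eta,d^c\eta,dd^c\eta$ are supported in $\Omega\smallsetminus X$; hence in each convolution defining $R_m^j$ the argument $z-x$ stays bounded away from $0$, so dominated convergence applied to $K_m\star\chi_j\to K_m$ together with its first derivatives gives $R_m^j\to R_m$ in $\mathcal{C}^\infty_{m-p,m-p}(X)$. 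Passing to weak limits in the regularized identity yields the stated formula for $dd^cU_m$ as currents.

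The three remaining assertions are then direct. Positivity of the principal term comes from applying Lemma~1 of the paper to the $(n-m+1)$-positive $(1,1)$-form $dd^ch_m$: it gives $dd^cK_m=dd^ch_m\wedge\beta^{m-1}\geq 0$ as an $(m,m)$-form, whose convolution with the positive closed current $\eta T$ is a positive $(m-p,m-p)$-current. The case $m=n$ follows from item~(1): $dd^cK_n=\delta_0\cdot\beta^n$, and the push-forward collapses onto the diagonal $x=z$, so the principal term equals $\eta T$. For $m<n$, formula (5.2) shows that the coefficients of $dd^ch_m$ are controlled by $|x|^{-2n/(n-m+1)}$, which lies in $L_{loc}^\alpha$ precisely for $\alpha<n-m+1$; Young's inequality for the convolution with the compactly supported measure-current $\eta T$ then propagates this regularity to the principal term. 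The main obstacle of the plan is the bookkeeping in item (3): cleanly isolating $\tau^\star(dd^cK_m)$ from the full $dd^c$ on $\bC^{2n}$ and verifying that the Stokes boundary contributions organize into a genuinely smooth remainder on $X$.
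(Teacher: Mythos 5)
Your treatment of items (2) and (3) follows essentially the same route as the paper: the identity $U_m=p_{2\star}[p_1^\star(\eta T)\wedge\tau^\star K_m]$, the splitting of $dd^cU_m^j$ into a principal term plus a remainder $R_m^j$ built from $d\eta\wedge T$, $d^c\eta\wedge T$, $dd^c\eta\wedge T$, the identification $\tau^\star(dd^cK_m^j)$ with the convolution of $\tau^\star(dd^cK_m)$ by $\chi_j$ via the $L^\alpha_{loc}$ bound on the coefficients of $dd^ch_m$ (the paper does this by the explicit decomposition $dd^ch_m=\varphi_m\beta+\psi_m\gamma$ and associativity of convolution), and the $\mathcal{C}^\infty(X)$ convergence of $R_m^j$ because the derivatives of $\eta$ vanish on $X$ (your phrase ``$z-x$ stays bounded away from $0$'' is only true for $z$ in a compact subset of $X$, but that is exactly what the $\mathcal{C}^\infty(X)$ topology requires). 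For item (1) you replace the paper's explicit computation of $h_m(dd^ch_m)^{n-m+1}\wedge\beta^{m-1}$ and of $\Delta(|x|^{-2(n-1)})$ by a support-plus-homogeneity argument; this is a legitimate, slightly more structural alternative, at the price of not producing the explicit value of $c_{n,m}$ that the paper's computation gives.

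There is, however, a genuine gap in your positivity argument for the principal term. Lemma 1 applied to the $(n-m+1)$-positive form $dd^ch_m$ only yields that $dd^cK_m=dd^ch_m\wedge\beta^{m-1}$ is a \emph{positive} $(m,m)$-form, and positivity of the form is not enough: the current $\eta T$ is only assumed positive (not strongly positive), and the pairing of a positive current with a positive but not strongly positive form can be negative, so you cannot conclude that $p_{2\star}[p_1^\star(\eta T)\wedge\tau^\star(dd^cK_m)]$ is a positive current. This is precisely the point the paper takes care of: it proves that $dd^cK_m$ is \emph{strongly} positive via the explicit identity
\begin{equation*}
dd^cK_m=\mu_m|x|^{\frac{2m(n-m+1)-2n}{n-m+1}}\alpha^m+\mu_m\frac{(m-1)(n-m)}{n-m+1}|x|^{-\frac{2(2n-m+1)}{n-m+1}}\gamma\wedge\beta^{m-1},\qquad \alpha=dd^c\log|x|^2,
\end{equation*}
a nonnegative combination of strongly positive forms (note that the same positive/strongly positive distinction is why Theorem 8 later assumes $T$ strongly positive). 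Your argument can be repaired, either by this computation or by observing that, after unitary diagonalization at a point, an $(n-m+1)$-positive $(1,1)$-form wedged with $\beta^{m-1}$ has diagonal coefficients equal to sums of $m$ eigenvalues, which are nonnegative for eigenvalue vectors in the closed G\aa rding cone $\overline{\Gamma}_{n-m+1}$, so the form is in fact strongly positive; but some such argument must be supplied, since Lemma 1 alone does not deliver it.
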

\begin{proof} Let $\alpha=dd^c\log|x|^2$ and $\gamma=i\partial|x|^2\wedge\overline{\partial}|x|^2$. By turning back to the equation after $(5.2)$, we see that
$(dd^ch_m)^{n-m+1}\wedge\beta^{m-1}$ is supported by the origin. On the other hand, using $(5.2)$ and the Bin\^ome formula, it is not hard to get
$$h_m(dd^ch_m)^{n-m+1}\wedge\beta^{m-1}=-c_n\mu_m^{n-m}|x|^{-2(n-1)}\beta^{n-1}+\frac{nc_n(n-m)\mu_m^{n-m}}{n-m+1}|x|^{-2n}\beta^{n-2}\wedge\gamma.$$
Since $\alpha^{n-1}=|x|^{-2(n-1)}\beta^{n-1}-(n-1)|x|^{-2n}\gamma\wedge\beta^{n-2}$, we deduce the equality
$$h_m(dd^ch_m)^{n-m+1}\wedge\beta^{m-1}=\frac{-c_n\mu_m^{n-m}(m-1)}{(n-1)(n-m+1)}|x|^{-2(n-1)}\beta^{n-1}-\frac{nc_n(n-m)\mu_m^{n-m}}{(n-m+1)(n-1)}\alpha^{n-1}.$$
It follows that
$$(dd^ch_m)^{n-m+1}\wedge\beta^{m-1}=dd^c(h_m(dd^ch_m)^{n-m+1}\wedge\beta^{m-1})=\frac{-c_n\mu_m^{n-m}(m-1)}{(n-1)(n-m+1)}\Delta(|x|^{-2(n-1)}).\beta^{n}.$$
It is well-known that $\Delta(-|x|^{-2(n-1)})=4n(n-1)I_{2n}\delta_0$, where $I_{2n}=\int_{\bR^{2n}}(|x|^2+1)^{-n-1}d\lambda(x)$. Then, we obtain the first statement
$$(dd^ch_m)^{n-m+1}\wedge\beta^{m-1}=4n(c_n)^{n-m+1}\left(\frac{m-1}{n-m+1}\right)^{n-m+1}I_{2n}\delta_0.\beta^n.$$
 To prove the second and the third statement we follows the arguments of \cite{[3]}. To check the third one, we denote by $p_1$ (resp.$p_2$) the first (resp.second) projection of $\bC^n\times\bC^n$ on $\bC^n$, i.e, $p_1(x,z)=x$ and $p_2(x,z)=z$. Let also $\tau$ be the function defined by $\tau(x,z)=z-x$. From the integral expressions of $U_m$ and $U_m^j$, we see that
$$U_m=p_{2\star}[p_1^\star(\eta T)\wedge\tau^\star(K_m)]\qquad {\rm and}\qquad U_m^j=p_{2\star}[p_1^\star(\eta T)\wedge\tau^\star(K_m^j)] .$$
This prove in particular the negativeness of $U_m$ and $U_m^j$. On the other hand, we have
\begin{equation}
\begin{array}{lcl}
dd^cU_m^j &=&p_{2\star}[p_1^\star(\eta T) \wedge \tau^\star(dd^cK_m^j)]+p_{2\star}[p_1^\star(dd^c(\eta T))\wedge \tau^\star(K_m^j)] \\&\ & +p_{2\star}[p_1^\star(d(\eta T)) \wedge\tau^\star(d^cK_m^j)]-p_{2\star}[p_1^\star(d^c(\eta T))\wedge\tau^\star(dK_m^j)]\\&=&p_{2\star}[p_1^\star(\eta T) \wedge \tau^\star(dd^cK_m^j)]+R_m^j.
\end{array}
\end{equation}
As the current $T$ is closed and the forms $ d\eta,d^c\eta$ and $dd^c\eta$ vanish on $X$, the sequence $R_m^j$ is smooth on $X$ and converges in $\C_{m-p,m-p}^{\infty}(X)$-topology to the smooth form
$$R_m=p_{2\star}[p_1^\star(dd^c(\eta T))\wedge \tau^\star(K_m)]+p_{2\star}[p_1^\star(d(\eta T)) \wedge\tau^\star(d^cK_m)]-p_{2\star}[p_1^\star(d^c(\eta T))\wedge\tau^\star(dK_m)].$$
If $m=n$, then by \cite{[3]}, $p_{2\star}[p_1^\star(\eta T) \wedge \tau^\star(dd^cK_n^j)]=(\eta T)\star\chi_j$. Assume now that $m<n$. For simplicity, setting
$$dd^ch_m=\varphi_m(x)\beta+\psi_m(x)\gamma,\ \ \varphi(x)=\ds\mu_m|x|^{\frac{-2n}{n-m+1}},\ \psi(x)=\frac{-n\mu_m}{n-m+1}|x|^{-\frac{2(2n-m+1)}{n-m+1} }.$$
It follows that
$$dd^cK_m^j=dd^ch_m^j\wedge\beta^{m-1}=\varphi_m^j(x)\beta^m+(\psi_m\gamma)\star\chi_j(x)\wedge\beta^{m-1}.$$ Consequently,
\begin{equation}
\begin{array}{lcl}
& &p_{2\star}[p_1^\star(\eta T) \wedge \tau^\star(dd^cK_m^j)]=\\&=&p_{2\star}[p_1^\star(\eta T) \wedge \tau^\star(\varphi_m^j(x)\beta^m)]+p_{2\star}\left[p_1^\star(\eta T) \wedge \tau^\star\left((\psi_m\gamma)\star\chi_j(x)\wedge\beta^{m-1}\right)\right]\\&=&P_m^j(x)+Q_m^j(x).
\end{array}
\end{equation}
Observe that the forms $\varphi_m(x)\beta$ and $\psi_m(x)\gamma$ have $L_{loc}^\alpha$ as coefficients for $\alpha<n-m+1$ (because $\gamma\leq |x|^2\beta$). Therefore, similarly as in the proof of the second statement, by the associativity of convolution action, we obtain
\begin{equation}
\begin{array}{lcl}
P_m^j(x)&=&\ds\int_{x \in \bC^n}\!\eta (x) T(x)\wedge\varphi_m^j(z-x)\beta^m(z-x)\\&=&\left(\ds\int_{x \in \bC^n}\!\eta (x) T(x)\wedge\varphi_m(z-x)\beta^m(z-x)\right)\star\chi_j\\&=&p_{2\star}[p_1^\star(\eta T) \wedge \tau^\star(\varphi_m(x)\beta^m(x))]\star\chi_j.
\end{array}
\end{equation}
And also,
\begin{equation}
\begin{array}{lcl}
Q_m^j(x)&=&\ds\int_{x \in \bC^n}\!\eta (x) T(x)\wedge(\psi_m\gamma)\star\chi_j(z-x)\wedge\beta^{m-1}(z-x)\\&=&\left(\ds\int_{x \in \bC^n}\!\eta (x) T(x)\wedge(\psi_m\gamma)(z-x)\wedge\beta^{m-1}(z-x)\right)\star\chi_j\\&=&p_{2\star}\left[p_1^\star(\eta T) \wedge \tau^\star\left(\psi_m(x)\gamma(x)\wedge\beta^{m-1}(x)\right)\right]\star\chi_j.
\end{array}
\end{equation}
On the other hand, we have
$$dd^cK_m=dd^ch_m\wedge\beta^{m-1}=\varphi_m(x)\beta^m+\psi_m(x)\gamma\wedge\beta^{m-1}.$$
Therefore, by summing up $(5.5)$ and $(5.6)$, the equation $(5.4)$ yields $$p_{2\star}[p_1^\star(\eta T) \wedge \tau^\star(dd^cK_m^j)]=\left(p_{2\star}[p_1^\star(\eta T) \wedge \tau^\star(dd^cK_m)]\right)\star\chi_j.$$ Finally, in
 virtue of $(5.3)$, we get
$$dd^cU_m^j=\left(p_{2\star}[p_1^\star(\eta T) \wedge \tau^\star(dd^cK_m)]\right)\star\chi_j+R_m^j.$$
In order to finish the proof, it remains to check that the current $p_{2\star}[p_1^\star(\eta T) \wedge \tau^\star(dd^cK_m)]$ is positive. To this end, it suffices to prove that $dd^cK_m$ is strongly positive. By using the well-known equality $\alpha^m=|x|^{-2m}\beta^m-m|x|^{-2m-2}\gamma\wedge\beta^{m-1}$, and by $(5.2)$, we have
\begin{equation}
\begin{array}{lcl}
dd^c K_m&=&\mu_m|x|^{\frac{-2n}{n-m+1}}\left[\beta^m-\frac{n}{n-m+1}|x|^{-2}\gamma\wedge\beta^{m-1}\right]\\&=&\mu_m|x|^{\frac{-2n}{n-m+1}}\left[|x|^{2m}\alpha^m+\left(\frac{m(n-m+1)-n}{n-m+1}\right)|x|^{-2}\gamma\wedge\beta^{m-1}\right]\\&=&\mu_m|x|^{\frac{2m(n-m+1)-2n}{n-m+1}}\alpha^m+\mu_m\left(\frac{m(n-m+1)-n}{n-m+1}\right)|x|^{-\frac{2(2n-m+1)}{n-m+1}}\gamma\wedge\beta^{m-1}.
\end{array}
\end{equation}
Since $\frac{m(n-m+1)-n}{n-m+1}=\frac{(m-1)(n-m)}{n-m+1}\geq 0$, the proof was completed.
 \end{proof}
 Using Thm.7 combined with the previous properties of $U_m$ and $U_m^j$, we can prove the following generalization of a result of \cite{[3]} for the case $m=n$.
 \begin{theo}
 Let $T,U_m,U_m^j$ as above such that $T$ is strongly positive. Assume that $u_1,...,u_q$ are locally bounded $m$-sh functions and $u_1^j,...,u_q^j$, $1\leq q\leq p+1$, are sequences of $m$-sh functions decreasing pointwise respectively to $u_1,...,u_q$. Then,
 \begin{enumerate}
 \item $U_m^j\wedge\beta^{n-m}\wedge dd^cu_1\wedge ...\wedge dd^cu_q$ converges in the sense of currents on $\Omega$ to a limit denoted by $U_m\wedge\beta^{n-m}\wedge dd^cu_1\wedge ...\wedge dd^cu_q$.
 \item $U_m\wedge\beta^{n-m}\wedge dd^cu_1^j\wedge ...\wedge dd^cu_q^j$ converges weakly on $\Omega$ to $U_m\wedge\beta^{n-m}\wedge dd^cu_1\wedge ...\wedge dd^cu_q$.
 \item $U_m^j\wedge\beta^{n-m}\wedge dd^cu_1^j\wedge...\wedge dd^cu_q^j$ converges weakly on $\Omega$ to $U_m\wedge\beta^{n-m}\wedge dd^cu_1\wedge ...\wedge dd^cu_q$.
 \item $dd^c(U_m\wedge\beta^{n-m}\wedge dd^cu_1\wedge ...\wedge dd^cu_q)=dd^cU_m\wedge\beta^{n-m}\wedge dd^cu_1\wedge
 ...\wedge dd^cu_q$ on $\Omega $.
 \end{enumerate}
\end{theo}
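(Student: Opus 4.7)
The plan is to apply Theorem 7 to the pair $S_m = U_m$, $S_m^j = U_m^j$, with the parameter shift $p \mapsto p+1$. Indeed $U_m$ has bidimension $(n-m+p+1, n-m+p+1)$, so the hypotheses of Theorem 7 have to be read with ``$p$'' replaced by $p+1$; the compatibility $p+1 \leq m$ follows from the standing assumption $p < m$ in Definition 9, and the range $1 \leq q \leq p+1$ for the number of $m$-sh factors matches exactly the range $q \leq \tilde p$ allowed in Theorem 7 with $\tilde p = p+1$.

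First I verify the four hypotheses of Theorem 7 for $(U_m, U_m^j)$. Smoothness of $U_m^j$ is immediate from its integral representation against the smooth kernel $K_m^j$. The weak decrease $U_m^j \downarrow U_m$ follows from the pointwise decrease $h_m \star \chi_j \downarrow h_m$ (the standard regularization of an $(n-m+1)$-sh, hence subharmonic, function), transported through the integral representation. The $m$-negativity of $U_m^j$ follows because $K_m^j = (h_m \star \chi_j)\beta^{m-1}$ is a \emph{negative} strongly positive $(m-1,m-1)$-form (since $h_m \leq 0$ and $\beta^{m-1}$ is strongly positive), and convolution of the strongly positive current $\eta T$ with a negative strongly positive kernel produces a negative strongly positive form. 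Finally, the $m$-positivity of $dd^c U_m^j$ is read off from Proposition 7(3), which decomposes
\[
dd^c U_m^j = \bigl(p_{2\star}[p_1^\star(\eta T)\wedge\tau^\star(dd^c K_m)]\bigr)\star\chi_j + R_m^j;
\]
the first term is smooth and strongly positive because $dd^c K_m$ is strongly positive by formula (5.7) and $T$ is strongly positive by assumption, while $R_m^j$ is a smooth correction that converges in the $\mathcal{C}^\infty_{m-p,m-p}(X)$ topology to a smooth limit $R_m$.

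With these verifications in hand, statement (1) is exactly part (4) of Theorem 7 applied to the decreasing sequence $(U_m^j)_j$; statement (2) is part (1) of Theorem 7 applied to the decreasing sequences $(u_i^j)_j$; and statement (3) follows by a standard diagonal extraction combining (1), (2), and the Corollary to Theorem 7 (which guarantees that the limit is independent of the chosen approximating sequence for $U_m$). For statement (4), I would apply $dd^c$ to both sides of the identity defining $U_m \wedge \beta^{n-m} \wedge dd^c u_1 \wedge \cdots \wedge dd^c u_q$ as a weak limit: using continuity of $dd^c$ on distributions, closedness of $\beta^{n-m}$ and of each $dd^c u_i$, and the decomposition of $dd^c U_m^j$ from Proposition 7(3), one passes to the limit $j \to +\infty$ to recover $dd^c U_m \wedge \beta^{n-m}\wedge dd^c u_1 \wedge \cdots \wedge dd^c u_q$.

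The main obstacle is that the remainder $R_m^j$ is not a priori $m$-positive, so a naive appeal to Theorem 7 is not quite immediate. I would circumvent this by splitting $dd^c U_m^j$ into its strongly positive smoothed part and $R_m^j$, invoking the positive-current/weak-convergence machinery of Theorem 7 on the first summand, and using the smooth convergence $R_m^j \to R_m$ on $X$ together with the elementary continuity of wedge products of weakly convergent currents with smooth forms to handle the second summand. Since weak convergence is a local notion, it is enough to argue on relatively compact subsets of $X$, where $\eta \equiv 1$ and the decomposition of Proposition 7(3) is fully effective.
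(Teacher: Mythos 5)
You have correctly located the crux: since $R_m^j$ need not be positive, $dd^cU_m^j$ is not $m$-positive in general, so Theorem 7 cannot be applied to $S_m^j=U_m^j$ as your first paragraphs assert (your claim that the $m$-positivity of $dd^cU_m^j$ ``is read off from Proposition 7(3)'' is precisely what fails). The problem is that your proposed circumvention does not repair this. Theorem 7 is a statement about the currents $U_m^j\wedge\beta^{n-m}\wedge dd^cu_1\wedge\cdots\wedge dd^cu_q$, and the $m$-positivity of $dd^cS_m^j$ enters its proof as a hypothesis on the \emph{primitive} $S_m^j$ (it is what allows one to discard the term $(v_p+M)\psi\, dd^cS_m^j\wedge\alpha$ in the Stokes estimate and get the lower bound). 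Splitting $dd^cU_m^j$ into its positive smoothed part and $R_m^j$ and ``invoking Theorem 7 on the first summand'' therefore does not parse: the first summand is not of the form $dd^c(\text{something to which Theorem 7 applies})$ with an identified primitive, and the smooth convergence $R_m^j\to R_m$ only controls terms in which $R_m^j$ itself is wedged, which is not what occurs in statements (1)--(3), where the object being wedged is $U_m^j$, not its $dd^c$.

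The missing idea, which is the paper's actual argument, is to correct $U_m^j$ itself rather than its $dd^c$: working locally with $\Omega=\{\rho<0\}$, $\rho$ smooth and strictly psh near $\overline\Omega$, one uses the fact (from Proposition 7) that the coefficients of $R_m^j$ are uniformly bounded to choose a single constant $A>0$ such that $R_m^j+A(dd^c\rho)^{m-p}$ is strongly positive for all $j$, and then sets $S_m^j=U_m^j+A\rho(dd^c\rho)^{m-p-1}$. This sequence is still smooth, decreasing, and $m$-negative (because $\rho<0$ and $U_m^j$ is strongly negative, $T$ being strongly positive), while now $dd^cS_m^j=p_{2\star}[p_1^\star(\eta T)\wedge\tau^\star(dd^cK_m)]\star\chi_j+R_m^j+A(dd^c\rho)^{m-p}$ is $m$-positive, so Theorem 7 genuinely applies to $(S_m^j)_j$. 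The conclusions for $U_m^j$ are then recovered by subtracting the fixed smooth corrector $A\rho(dd^c\rho)^{m-p-1}$, whose wedge products with $dd^cu_1^j\wedge\cdots\wedge dd^cu_q^j$ converge by the standard continuity of the hessian operator for locally bounded $m$-sh functions. Without this corrector (or an equivalent device), your proof of statements (1)--(3) has a genuine gap; the remaining remarks on (4) are fine in spirit once (1)--(3) are established.
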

Notice that the technical assumption on $T$ to be strongly positive is especially related to the case $m<n$. In fact, when $m=n$, it suffices to assume that $T$ is positive (see \cite{[3]}).
\begin{proof} The problem is local, then without loss of generality we can assume that $\Omega$ is pseudoconvex i.e, $\Omega =\{\rho <0\}$, where $\rho$ is a smooth psh function in a neighborhood of $\overline\Omega$. Since $T$ is strongly positive, the currents $U_m, U_m^j$ are strongly negative. It follows from lemma 1, that $U_m^j$ is an $m$-negative form. On the other hand, by Prop.7, the form $R_m^j$ has a uniformly bounded coefficients on $\Omega$, thus, there exists a constant $A>0$ such that $R_m^j+A(dd^c\rho)^{m-p}$ is strongly positive. Hence, once again, Prop.7 imply that the form
$$dd^c\left(U_m^j+A\rho (dd^c\rho)^{m-p-1}\right)-p_{2\star}[p_1^\star(\eta T) \wedge \tau^\star(dd^cK_m)]\star\chi_j,$$
is $m$-positive. Since the form $p_{2\star}[p_1^\star(\eta T) \wedge \tau^\star(dd^cK_m)]\star\chi_j$ is $m$-positive, it follows that the form $dd^c(U_m^j+A\rho (dd^c\rho)^{m-p-1})$ is so. Therefore, by taking into account the fact that the form $A\rho (dd^c\rho)^{m-p-1}$ is $m$-negative, we are in measure to apply Thm.7 for the sequence $S_m^j=U_m^j+A\rho (dd^c\rho)^{m-p-1}$.
\end{proof}

\end{document}